\documentclass{amsart}
\usepackage{amssymb,amsthm,enumitem,mathdots,mathtools,url,xcolor}
\usepackage{tikz}
\usetikzlibrary{arrows.meta}
\usetikzlibrary{calc}

\theoremstyle{plain}
\newtheorem{theorem}{Theorem}
\newtheorem{corollary}{Corollary}
\newtheorem{lemma}{Lemma}
\newtheorem{proposition}{Proposition}

\theoremstyle{definition}
\newtheorem{definition}{Definition}
\newtheorem{example}{Example}

\theoremstyle{remark}
\newtheorem{remark}{Remark}

\DeclareMathOperator{\Btr}{Btr}
\DeclareMathOperator{\Ptr}{Ptr}
\DeclareMathOperator{\BT}{BT}
\DeclareMathOperator{\PT}{PT}
\DeclareMathOperator{\chr}{char}
\DeclareMathOperator{\tr}{tr}
\DeclareMathOperator{\vc}{vec}

\DeclareMathOperator{\cdiag}{cdiag}
\DeclareMathOperator{\diag}{diag}
\DeclareMathOperator{\rank}{rank}

\allowdisplaybreaks[4]

\begin{document}

\title{Kronecker differences}

\author{Keegan Doig Anderson}
\address[K.D.~Anderson]{
 Department of Mathematics and Applied Mathematics,
 University of Johannesburg,
 Auckland Park, 2006, South Africa
}
\email[K.D.~Anderson]{kdanderson@uj.ac.za}

\author{Yorick Hardy}
\author{Bertin Zinsou}
\address[Y.~Hardy,~B.~Zinsou]{
 School of Mathematics,
 University of the Witwatersrand,
 Johannesburg 2050, South Africa
}
\email[Y.~Hardy]{yorick.hardy@wits.ac.za}
\email[B.~Zinsou]{bertin.zinsou@wits.ac.za}

\address[K.D.~Anderson,Y.~Hardy,~B.~Zinsou]{
 National Institute for Theoretical and Computational Sciences (NITheCS),
 South Africa
}

\begin{abstract}
 Over the real numbers, the Kronecker sum is the unique
 operation on matrices which exponentiates to the Kronecker
 product. Kronecker quotients provide an algebraic view
 of decompositions of matrices in terms of Kronecker products. This article
 explores families of operations, Kronecker differences,
 which are a kind of ``inverse'' for Kronecker sums.
 The correspondence between Kronecker differences and Kronecker quotients is explored.
 Furthermore, we show that a certain class of Kronecker differences may be characterized
 by families of matrices with these families again
 being expressed as Kronecker products. This approach
 provides a different ``nonlinear'' view
 towards tensor decomposition.
\end{abstract}

\maketitle

\section{Introduction}

Let $\mathbb{F}_m$ denote the vector space of $m\times m$ matrices
over the field $\mathbb{F}$ with characteristic $\chr(\mathbb{F})$, where $m\in\mathbb{N}$. We will denote
by $\mathbb{F}_m\otimes\mathbb{F}_n=\mathbb{F}_{mn}$ the tensor
product of $\mathbb{F}_m$ and $\mathbb{F}_n$. By the matrix $E_{ij}$
we mean a matrix in $\mathbb{F}_m$ with a 1 in row $i$ and column $j$ and
0 in every other entry. In general, the order of the matrix $E_{ij}$ will
be clear from the context. The trace $\tr(A)$ of a matrix $A\in\mathbb{F}_m$
is the sum of the main diagonal entries of $A$.

\subsection{Kronecker products and Kronecker quotients}

Let $A\in\mathbb{F}_m$ and $B\in\mathbb{F}_s$. The Kronecker product
\cite{graham81a,horn91,kronecker3}
$A\otimes B\in\mathbb{F}_{ms}$ is the $ms\times ms$ matrix over $F$ with entries (here $:=$ denotes a definition)
\begin{equation}
 \label{eq:kronent}
 (A\otimes B)_{(i-1)s+p,(j-1)s+q} := (A)_{i,j}(B)_{p,q},
\end{equation}
where
 $i,j\in\{1,\ldots,m\}$ and
 $p,q\in\{1,\ldots,s\}$.
In other words, we can write in block matrix form
\begin{equation*}
A\otimes B=\begin{bmatrix}
                    (A)_{1,1}B & (A)_{1,2}B & \ldots & (A)_{1,n} B \\
                    (A)_{2,1}B & (A)_{2,2}B & \ldots & (A)_{2,n} B \\
                    \vdots     & \vdots     & \ddots & \vdots      \\
                    (A)_{m,1}B & (A)_{m,2}B & \ldots & (A)_{m,n} B
           \end{bmatrix}.
\end{equation*}
We note that the Kronecker product may be viewed as a family of operations
on vector spaces of square matrices (indexed by the orders of the matrices involved), 
or as an operation on the graded vector space of all square matrices. In other words,
the Kronecker product is the family of operations
$\otimes:\mathbb{F}_m\times\mathbb{F}_n\to\mathbb{F}_{mn}$ indexed by $m,n\in\mathbb{N}$.

In the same way that a Kronecker product is a family
of operations on various matrix vector spaces (or on the graded vector space of
all square matrices), a Kronecker quotient may be viewed as a family of operations
on various matrix vector spaces (or as a partially defined operation on the graded
vector space of all square matrices). In other words, we consider the family
$\oslash:\mathbb{F}_{mn}\times\mathbb{F}_n\to\mathbb{F}_m$ indexed by $m,n\in\mathbb{N}$
satisfying the following definition.

\begin{definition}[Kronecker quotient \cite{kronquotient,leopardi05a}]
 \label{def:kquotient}%
 A (right) Kronecker quotient is an operation $\oslash$ satisfying
 \begin{equation*}
  (A\otimes B)\oslash B = A
 \end{equation*}
 for all matrices $A\in\mathbb{F}_{m}$ and $B\in\mathbb{F}_{n}$
 ($B$ non-zero) and $m,n\in\mathbb{N}$.
\end{definition}

In general, $(M\oslash A)\otimes A\neq M$ for $mn\times mn$ matrices $M$
and $n\times n$ non-zero matrices $A$. A detailed study of Kronecker quotients
appears in \cite{kronquotient}. Throughout this article, we will consider
only right Kronecker quotients and differences, analogous results hold for
left Kronecker quotients and differences in an obvious way. Uniform Kronecker
quotients are of particular interest, since the family of operations are
related to each other via algebraic equations. More precisely, there is a generating
operation for the entire family in the algebraically obvious way:

\begin{definition}
 \label{def:kquniform}%
 A Kronecker quotient $\oslash$ is a uniform Kronecker quotient if
 \begin{equation*}
  (A\otimes C)\oslash B = A\otimes (C\oslash B)
 \end{equation*}
 for all matrices $A\in\mathbb{F}_{m}$, $B\in\mathbb{F}_n$, $C\in\mathbb{F}_p\otimes\mathbb{F}_n$ and $m,n,p\in\mathbb{N}$,
 and
 \begin{equation*}
  (A+B)\oslash C = (A\oslash C)+(B\oslash C), \qquad
  (kA)\oslash C = k(A\oslash C)
 \end{equation*}
 for all matrices $A$, $B$ and $C$, and scalars $k\in\mathbb{F}$ whenever the operations are defined.
\end{definition}

\begin{remark}
 An equivalent definition for uniform Kronecker quotients
 is as follows, by simply setting $p=1$ in Definition \ref{def:kquniform}:
 A Kronecker quotient $\oslash$ is a uniform Kronecker quotient if
 \begin{math}
  (A\otimes C)\oslash B = A\otimes (C\oslash B)
 \end{math}
 for all matrices $A\in\mathbb{F}_{m}$, $B,C\in\mathbb{F}_n$ and $m,n\in\mathbb{N}$,
 and
 \begin{math}
  (A+B)\oslash C = (A\oslash C)+(B\oslash C),
 \end{math}
 \begin{math}
  (kA)\oslash C = k(A\oslash C)
 \end{math}
 for all matrices $A$, $B$ and $C$, and scalars $k\in\mathbb{F}$ whenever the operations are defined.
\end{remark}

In \cite{kronquotient}, examples of uniform Kronecker quotients over the real
and complex numbers are given. The following lemma establishes the existence
of a uniform Kronecker quotient over an arbitrary field $\mathbb{F}$.

\begin{lemma}\label{lem:kqexist}%
 Let $\mathbb{F}$ be a field.
 Then there exists a uniform Kronecker quotient over $\mathbb{F}$.
\end{lemma}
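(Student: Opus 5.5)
We construct the quotient explicitly by a ``partial trace against a chosen entry'' of $B$, avoiding any inner-product structure that might fail over an arbitrary field. For a non-zero $B\in\mathbb{F}_n$, fix a pair of indices $(p_0,q_0)=(p_0(B),q_0(B))$ with $(B)_{p_0,q_0}\neq 0$. Any deterministic rule will do; for definiteness take the lexicographically first non-zero entry. For $M\in\mathbb{F}_m\otimes\mathbb{F}_n=\mathbb{F}_{mn}$ define $M\oslash B\in\mathbb{F}_m$ entrywise by
\begin{equation*}
 (M\oslash B)_{i,j} := (B)_{p_0,q_0}^{-1}\,(M)_{(i-1)n+p_0,(j-1)n+q_0}.
\end{equation*}
Equivalently, $M\oslash B = (B)_{p_0,q_0}^{-1}\,(I_m\otimes e_{p_0}^T)\,M\,(I_m\otimes e_{q_0})$, where $e_p$ denotes the standard basis column vector of length $n$. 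Define $M\oslash B$ arbitrarily (say as $0$) when $B=0$, since the definition only constrains non-zero $B$.

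The first step is to check Definition \ref{def:kquotient}. By \eqref{eq:kronent},
\begin{equation*}
 (A\otimes B)_{(i-1)n+p_0,(j-1)n+q_0}=(A)_{i,j}(B)_{p_0,q_0},
\end{equation*}
so $(A\otimes B)\oslash B=A$. The second step is linearity in the first argument. This is immediate, because $M\mapsto M\oslash B$ is a fixed linear map for fixed $B$: it is multiplication on both sides by fixed matrices together with scaling by a fixed scalar.

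The remaining step, and the only one requiring care, is the uniformity identity $(A\otimes C)\oslash B=A\otimes(C\oslash B)$ for $A\in\mathbb{F}_m$, $C\in\mathbb{F}_p\otimes\mathbb{F}_n$. We use the matrix form together with the mixed-product rule and associativity of $\otimes$. Since $I_{mp}=I_m\otimes I_p$, we have $I_{mp}\otimes e_{p_0}^T=I_m\otimes(I_p\otimes e_{p_0}^T)$, and similarly for $e_{q_0}$. Hence
\begin{equation*}
 (I_m\otimes I_p\otimes e_{p_0}^T)(A\otimes C)(I_m\otimes I_p\otimes e_{q_0})
 = A\otimes\bigl((I_p\otimes e_{p_0}^T)\,C\,(I_p\otimes e_{q_0})\bigr).
\end{equation*}
Dividing by $(B)_{p_0,q_0}$ gives the claim. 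The key point is that the choice of $(p_0,q_0)$ depends only on $B$ and not on the first argument. This is exactly what makes the family consistent across the different sizes $m$ and $p$.

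The main obstacle is therefore conceptual rather than computational: we must make a choice that works over every field. Over $\mathbb{R}$ or $\mathbb{C}$ one could instead use $\tr(B^*B)$-type normalizations, but these can vanish over fields of positive characteristic or over non-ordered fields. Normalizing by a single non-zero entry sidesteps this, because an entry is non-zero precisely when it is invertible in $\mathbb{F}$.
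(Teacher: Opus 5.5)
Your proposal is correct and is the paper's construction. The paper defines the map $M\mapsto M\oslash C$ as the linear extension of $(A\otimes B)\oslash C := A\,(B)_{f(C)}/(C)_{f(C)}$ for a chosen non-zero entry position $f(C)$, which is exactly your $(B)_{p_0,q_0}^{-1}(I_m\otimes e_{p_0}^T)\,M\,(I_m\otimes e_{q_0})$; your mixed-product computation simply makes explicit what the paper asserts ``by construction''.

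One small refinement: the value at $B=0$ is not entirely arbitrary, since linearity and the uniformity identity are still required there. Your choice $M\oslash 0=0$ satisfies both, and it is cleaner than the paper's convention $f(0_m)=(1,1)$, which as written divides by zero.
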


\begin{proof}
 Let $B,C\in\mathbb{F}_m$ with $C\neq 0$. Then there exists
 an non-zero entry $(C)_{i,j}\neq 0$ in $C$. Now let $f:\mathbb{F}_m\to\mathbb{N}\times\mathbb{N}$
 be defined by $f(C)=(i,j)$ and $f(0_m)=(1,1)$, i.e. $f$ chooses a non-zero entry of $C$, unless
 $C$ is the zero matrix. Define $\text{---}\oslash C$
 as the linear extension of
 \begin{equation*}
  (A\otimes B)\oslash C := A\dfrac{(B)_{f(C)}}{(C)_{f(C)}}
 \end{equation*}
 for all $A\in\mathbb{F}_s$, $s\in\mathbb{N}$. Then
 $\oslash$ is a uniform Kronecker quotient by construction.
\end{proof}

\subsection{Kronecker sums and Kronecker differences}

The Kronecker sum $A\oplus B$ of an $m\times m$ matrix $A$ and an
$n\times n$ matrix $B$ is given by \cite[pp. 268]{horn91}
\begin{equation*}
 A\oplus B = A\otimes I_n + I_m\otimes B.
\end{equation*}
%The non-diagonal entries of $A$ and $B$ are uniquely determined by the Kronecker sum,
%\begin{equation*}
% (A)_{ij} = \tr[(E_{ji}\otimes E_{11})(A\oplus B)],\qquad
% (B)_{ij} = \tr[(E_{11}\otimes E_{ji})(A\oplus B)],
%\end{equation*}
%where $i\neq j$. However, the diagonal entries are not uniquely determined,
%for example $I_{mn}=I_m\oplus 0=0\oplus I_n$.
Over the real and complex numbers, we have
\begin{equation*}
 \exp(A\oplus B) = \exp(A)\otimes\exp(B).
\end{equation*}
The Kronecker sum uniquely satisfies the above equation. The Kronecker sum
also arises in the study of matrix equations of the form
\begin{equation*}
 BX+XA^T = Y
\end{equation*}
where $X$ is the subject of the equation. Taking the vectorization (vec)
on both sides of the equation, and using the fact that $\vc(ABC)=(C^T\otimes A)\vc(B)$,
yields \cite[pp. 268]{horn91}
\begin{equation*}
 (A\oplus B) \vc(X) = \vc(Y).
\end{equation*}

This article defines and studies Kronecker differences and their connection with Kronecker
quotients \cite{kronquotient}.
Similar to the definition of Kronecker quotients, we consider the family $\ominus:\mathbb{F}_{mn}\times\mathbb{F}_n\to\mathbb{F}_m$
indexed by $m,n\in\mathbb{N}$ satisfying the following definition:

\begin{definition}
 \label{def:kqdifference}%
 A Kronecker difference is an operation $\ominus$ satisfying
 \begin{equation*}
  (A\oplus B)\ominus B = A
 \end{equation*}
 for all matrices $A\in\mathbb{F}_{m}$ and $B\in\mathbb{F}_{n}$,
 $m,n\in\mathbb{N}$.
\end{definition}

Every Kronecker quotient induces a Kronecker difference in the following way.

\begin{definition}
 \label{def:kqdefineskd}%
 Let $\oslash$ be a Kronecker quotient. The Kronecker difference $\ominus$ induced by $\oslash$ is the
 Kronecker difference given by
 \begin{equation*}
  M\ominus B := (M-I_m\otimes B)\oslash I_n
 \end{equation*}
 for all $M\in\mathbb{F}_{mn}$, $B\in\mathbb{F}_n$ and $m,n\in\mathbb{N}$.
\end{definition}

\begin{remark}
 \label{rem:kqdefineskd}%
 It is easily verified that this expression satisfies the condition for a
 Kronecker difference, i.e. for every $A\in\mathbb{F}_m$, $B\in\mathbb{F}_n$ and $m,n\in\mathbb{N}$,
 \begin{align*}
  (A\oplus B)\ominus B
   &= (A\otimes I_n + I_m\otimes B)\ominus B \\
   &= (A\otimes I_n + I_m\otimes B - I_m\otimes B)\oslash I_n \\
   &= (A\otimes I_n)\oslash I_n \\
   &= A.
 \end{align*}
\end{remark}

\begin{remark}[Duality 1]
 \label{rem:kddefineskq}%
 Definition \ref{def:kqdefineskd} has a weak dual, namely: Let $\oslash$ be a Kronecker quotient such that
 \begin{equation*}
  M\oslash B = (M(0_m\oplus B)^{-1})\ominus 0_n
 \end{equation*}
 for all $M\in\mathbb{F}_{mn}$, invertible $B\in\mathbb{F}_n$ and $m,n\in\mathbb{N}$,
 where $\ominus$ is a Kronecker difference. Indeed, for invertible $B$,
 $(0_m\oplus B)^{-1} = (I_m\otimes B)^{-1} = I_m\otimes B^{-1}$ and
 \begin{align*}
  (A\otimes B)\oslash B
   &= ((A\otimes I_n)(I_m\otimes B))\oslash B \\
   &= ((A\otimes I_n)(I_m\otimes B)(I_m\otimes B)^{-1})\ominus 0_n \\
   &= (A\oplus 0_n)\ominus 0_n \\
   &= A.
 \end{align*}
\end{remark}

The notion of uniformity in Kronecker quotients has an analogue for Kronecker differences. 
\begin{definition}
 \label{def:kduniform}%
 A Kronecker difference $\ominus$ is a uniform Kronecker difference if
 \begin{equation*}
  (A\oplus C)\ominus B = A\oplus (C\ominus B)
 \end{equation*}
 for all matrices $A\in\mathbb{F}_{m}$, $B\in\mathbb{F}_n$, $C\in\mathbb{F}_p\otimes\mathbb{F}_n$ and $m,n,p\in\mathbb{N}$,
 and
 \begin{equation*}
  (A+B)\ominus (C+D) = (A\ominus C)+(B\ominus D), \qquad
  (kA)\ominus (kC) = k(A\ominus C)
 \end{equation*}
 for all matrices $A$, $B$, $C$ and $D$, and scalars $k\in\mathbb{F}$ whenever the operations are defined.
\end{definition}
Once again, we immediately have a connection between uniform Kronecker quotients and uniform Kronecker differences.% in Proposition \ref{prop:ukqdefinesukd} below.

%Similar to uniform Kronecker quotients, we may take $p=1$ in
%Definition \ref{def:kduniform} by the following lemma.
%
%\begin{lemma}
% A Kronecker difference $\ominus$ is a uniform Kronecker difference if
% \begin{equation*}
%  (A\oplus C)\ominus B = A\oplus (C\ominus B)
% \end{equation*}
% for all matrices $A\in\mathbb{F}_{m}$, $B\in\mathbb{F}_n$, $C\in\mathbb{F}_n$ and $m,n\in\mathbb{N}$,
% and
% \begin{equation*}
%  (A+B)\ominus (C+D) = (A\ominus C)+(B\ominus D), \qquad
%  (kA)\ominus (kC) = k(A\ominus C)
% \end{equation*}
% for all matrices $A$, $B$, $C$ and $D$, and scalars $k\in\mathbb{F}$.
%\end{lemma}
%
%\begin{proof}
% Let $p\in\mathbb{N}$ and $D\in\mathbb{F}_p\otimes\mathbb{F}_n$.
% Then there exists $k\in\mathbb{N}$, matrices $D_{1,1},\ldots, D_{1,k}\in\mathbb{F}_p$ and matrices $D_{2,1},\ldots,D_{2,k}\in\mathbb{F}_n$ such that
% \begin{equation*}
%  D = \sum_{j=1}^k D_{1,j}\otimes D_{2,j}.
% \end{equation*}
% Hence,
% \begin{align*}
%  (A\oplus D)\ominus B
%   &= A\oplus \left(\sum_{j=1}^k D_{1,j}\otimes D_{2,j}\right) \ominus B \\
%   &= \left(A\oplus 0_{pn} + \sum_{j=1}^k 0_m\oplus( D_{1,j}\otimes D_{2,j})\right) \ominus B \\
%   &= (A\oplus 0_{pn})\ominus B + \sum_{j=1}^k (0_m\oplus( D_{1,j}\otimes D_{2,j})) \ominus B \\
%   &= (A\oplus 0_{pn})\ominus B + \sum_{j=1}^k (0_m\oplus( D_{1,j}\otimes D_{2,j})) \ominus B \\
%???
% \end{align*}
%\end{proof}}

\begin{proposition}
 \label{prop:ukqdefinesukd}%
 Let $\oslash$ be a uniform Kronecker quotient.
 Then the induced Kronecker difference $\ominus$ is uniform.
\end{proposition}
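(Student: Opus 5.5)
We verify the three defining conditions of Definition \ref{def:kduniform} directly from Definition \ref{def:kqdefineskd}, $M\ominus B=(M-I_m\otimes B)\oslash I_n$. We use only the uniformity properties of $\oslash$ from Definition \ref{def:kquniform} and the fact that $(X\otimes I_n)\oslash I_n=X$ from Definition \ref{def:kquotient}.

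\emph{Linearity.} Let $A,B\in\mathbb{F}_{mn}$ and $C,D\in\mathbb{F}_n$. Since $I_m\otimes(C+D)=I_m\otimes C+I_m\otimes D$, additivity of $\oslash$ in its first argument gives
\begin{equation*}
 (A+B)\ominus(C+D) = \bigl((A-I_m\otimes C)+(B-I_m\otimes D)\bigr)\oslash I_n = (A\ominus C)+(B\ominus D).
\end{equation*}
Similarly, $(kA)\ominus(kC)=(k(A-I_m\otimes C))\oslash I_n=k(A\ominus C)$ by homogeneity of $\oslash$. Both conditions therefore reduce to linearity of $\oslash$ in its first argument.

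\emph{Compatibility with $\oplus$.} Take $A\in\mathbb{F}_m$, $B\in\mathbb{F}_n$ and $C\in\mathbb{F}_p\otimes\mathbb{F}_n$. The quotient here is by $I_n$, with $A\oplus C\in\mathbb{F}_{mpn}$, so $I$ of order $mp$ multiplies $B$. We expand
\begin{equation*}
 (A\oplus C)-I_{mp}\otimes B = A\otimes I_{pn} + I_m\otimes C - I_m\otimes I_p\otimes B = A\otimes I_p\otimes I_n + I_m\otimes (C-I_p\otimes B).
\end{equation*}
We then apply $\oslash I_n$. Additivity splits this into two terms. For the first term, the uniformity rule with $A\otimes I_p\in\mathbb{F}_{mp}$ and $C=I_n$ gives $(A\otimes I_p\otimes I_n)\oslash I_n=(A\otimes I_p)\otimes(I_n\oslash I_n)$, and $I_n\oslash I_n=(I_1\otimes I_n)\oslash I_n=I_1=1$. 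The first term is therefore $A\otimes I_p$. For the second term, uniformity gives $I_m\otimes\bigl((C-I_p\otimes B)\oslash I_n\bigr)=I_m\otimes(C\ominus B)$. Summing, we obtain $A\otimes I_p+I_m\otimes(C\ominus B)=A\oplus(C\ominus B)$, as required.

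The main point needing care is this associativity step. We must identify $\mathbb{F}_m\otimes(\mathbb{F}_p\otimes\mathbb{F}_n)$ with $(\mathbb{F}_m\otimes\mathbb{F}_p)\otimes\mathbb{F}_n$, which is consistent because the Kronecker product is associative on matrices. We must also check that the uniformity axiom of Definition \ref{def:kquniform} is applied with the right orders: the outer factor lies in $\mathbb{F}_{mp}$ in the first term and in $\mathbb{F}_m$ in the second. The rest is routine bookkeeping.
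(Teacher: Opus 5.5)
Your proposal is correct and follows essentially the paper's computation: expand $(A\oplus C)-I_{mp}\otimes B$, split it by additivity of $\oslash$, pull out the outer Kronecker factors by uniformity, and check linearity directly. The differences are cosmetic. You group $C-I_p\otimes B$ before applying $\oslash I_n$, whereas the paper splits into three terms and recombines. Your detour through $I_n\oslash I_n=1$ is unnecessary, since $(A\otimes I_p\otimes I_n)\oslash I_n=A\otimes I_p$ follows directly from Definition \ref{def:kquotient}.
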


\begin{proof}
 Following the definition in Definition \ref{def:kqdefineskd}, %in the following way,
 \begin{equation*}
  M\ominus B := (M-I_m\otimes B)\oslash I_n
 \end{equation*}
 for all $M\in\mathbb{F}_{mn}$, $B\in\mathbb{F}_n$ and $m,n\in\mathbb{N}$,
 where $\oslash$ is a uniform Kronecker quotient.
 In Remark \ref{rem:kqdefineskd} it was shown that $\ominus$ is a Kronecker difference,
 it remains to show that this Kronecker difference is uniform.
 We have for all $A\in\mathbb{F}_{m}$, $B\in\mathbb{F}_n$, and $C\in\mathbb{F}_p\otimes\mathbb{F}_n$, 
 % KDA: I removed the red, because the calculation seems to be correct
 \begin{align*}
  (A\oplus C)\ominus B
   &= (A\otimes I_p\otimes I_n + I_m\otimes C - I_m\otimes I_p\otimes B) \oslash I_n \\
   &= (A\otimes I_p\otimes I_n)\oslash I_n + (I_m\otimes C)\oslash I_n - (I_m\otimes I_p\otimes B) \oslash I_n \\
   &= A\otimes I_p + I_m\otimes(C\oslash I_n) - I_m\otimes I_p\otimes(B\oslash I_n) \\
   &= A\otimes I_p + I_m\otimes[(C\oslash I_n) - I_p\otimes (B\oslash I_n)] \\
   &= A\otimes I_p + I_m\otimes [(C-I_p\otimes B)\oslash I_n] \\
   &= A\otimes I_p + I_m\otimes (C\ominus B) \\
   &= A\oplus (C\ominus B).
 \end{align*}
 Finally, for all $A,B\in\mathbb{F}_{mn}$ and $C,D\in\mathbb{F}_n$,
 \begin{align*}
  (A+B)\ominus (C+D)
   &= (A+B-I_m\otimes C-I_m\otimes D)\oslash I_n \\
   &= (A-I_m\otimes C+ B-I_m\otimes D)\oslash I_n \\
   &= (A-I_m\otimes C)\oslash I_n + (B-I_m\otimes D)\oslash I_n \\
   &= A\ominus C + B\ominus D
 \end{align*}
 and for any scalar $k\in\mathbb{F}$,
 \begin{align*}
  (kA)\ominus (kC)
   &= (kA-I_m\otimes(kC))\oslash I_n \\
   &= [k(A-I_m\otimes C)]\oslash I_n \\
   &= k[(A-I_m\otimes C)\oslash I_n] \\
   &= k(A\ominus C). \qedhere
 \end{align*}
\end{proof}

%Before studying Kronecker differences in more detail, we will need some additional definitions
%and technical results. We present these results in Appendix \ref{sec:orthogonality} and Appendix \ref{sec:lemmata}.
Before studying Kronecker differences in more detail, we will need some additional definitions
and technical results. 
The presentations of these definitions and technicalities are left for Appendix \ref{sec:orthogonality} and Appendix \ref{sec:lemmata}.

\section{Properties of Kronecker sums and Kronecker differences}

Kronecker sums obey the following identities. See for example \cite{schacke04,kronecker3} and their consequences.
\begin{align}
 \tag{S1}\label{eq:S1}
 (A\oplus B)^T &= A^T\oplus B^T \\
 \tag{S2}\label{eq:S2}
 \tr(A\oplus B) &= n\tr A+m\tr B \\
 \tag{S3}\label{eq:S3}
 (kA)\oplus(kB) &= k(A\oplus B) \\
 \tag{S4}\label{eq:S4}
 (A+B)\oplus(C+D) &= (A\oplus C)+(B\oplus D)\\
 \tag{S5}\label{eq:S5}
 (A\oplus B)\oplus C &= A\oplus(B\oplus C)\\
 \tag{S6}\label{eq:S6}
 [A\oplus B,C\oplus D] &= [A,C]\oplus[B,D]\\
 \tag{S7}\label{eq:S7}
 \exp(A\oplus B) &= \exp(A)\otimes\exp(B)
\end{align}

Let $\sigma(A,B):=A\oplus B$. Then \eqref{eq:S3} and \eqref{eq:S4} become
\begin{equation*}
 \sigma(k(A,B))=k\sigma(A,B),\qquad \sigma((A,C)+(B,D))=\sigma(A,C)+\sigma(B,D).
\end{equation*}
In other words, $\sigma$ is linear if and only if \eqref{eq:S3} and \eqref{eq:S4} hold.

We will investigate the corresponding equations for Kronecker differences.
The equations do not hold in general, but each equation holds true if
the left hand argument of $\ominus$ is an appropriate Kronecker difference.
For example, \eqref{eq:D1} below holds when $A=C\oplus B$, i.e.
\begin{equation*}
 ((C\oplus B)\ominus B)^T
  = C^T = (C^T\oplus B^T)\ominus B^T = (C\oplus B)^T\ominus B^T.
\end{equation*}
\begin{align}
 \tag{D1}\label{eq:D1}
 (A\ominus B)^T &= A^T\ominus B^T \phantom{\dfrac1n}\\
 \tag{D2}\label{eq:D2}
 \tr(A\ominus B) &= \frac1n(\tr A-m\tr B) \\
 \tag{D3}\label{eq:D3}
 (kA)\ominus(kB) &= k(A\ominus B)\phantom{\dfrac1n} \\
 \tag{D4}\label{eq:D4}
 (A+B)\ominus(C+D) &= (A\ominus C)+(B\ominus D)\phantom{\dfrac1n}\\
 \tag{D5}\label{eq:D5}
 (A\ominus B)\ominus C &= A\ominus(C\oplus B)\phantom{\dfrac1n}\\
 \tag{D6}\label{eq:D6}
 [A\ominus B,C\ominus D] &= [A,C]\ominus[B,D]\phantom{\dfrac1n}\\
 \tag{D7}\label{eq:D7}
 \exp(A\ominus B) &= \exp(A)\oslash\exp(B) \phantom{\dfrac1n}
\end{align}

Let $\delta(A,B):=A\ominus B$. Thus, $\delta:\mathbb{F}_{mn}\times\mathbb{F}_n\to\mathbb{F}_m$ satisfies
\begin{equation*}
 \delta(A\oplus B,B)=A.
\end{equation*}
Equations \eqref{eq:D3} and \eqref{eq:D4} become
\begin{equation}
 \label{eq:kdlinear}
 \delta(k(A,B))=k\delta(A,B),\qquad \delta((A,C)+(B,D))=\delta(A,C)+\delta(B,D).
\end{equation}
In other words, if $\ominus$ satisfies \eqref{eq:D3} and \eqref{eq:D4} then $\delta$ is linear.
In Section \ref{sec:canon}, we will consider a canonical form for linear $\delta$ and
its consequences for \eqref{eq:D1}, \eqref{eq:D2} and \eqref{eq:D5}.

\begin{proposition}
 \label{prop:induceddiffprop}%
 Let $\oslash$ be a Kronecker quotient and let $\ominus$
 be the induced Kronecker difference
 \begin{equation*}
  X\ominus B:=(X-I_m\otimes B)\oslash I_n
 \end{equation*}
 for all $X\in \mathbb{F}_{mn}$ and $B\in\mathbb{F}_n$.
 \begin{enumerate}[label=(\alph*)]
  \item
   If $(A\oslash B)^T = A^T\oslash B^T$
   %for all $A\in\mathbb{F}_{mn}$ and $B\in\mathbb{F}_n$,\\
   then $(A\ominus B)^T = A^T\ominus B^T$.\phantom{$\dfrac1n$}
   %for all $A\in\mathbb{F}_{mn}$ and $B\in\mathbb{F}_n$.
  \item
   If $\tr(A\oslash B) = \dfrac{\tr(A)}{\tr(B)}$
   %for all $A\in \mathbb{F}_{mn}$ and $B\in\mathbb{F}_n$ with $\tr(B)\neq 0$,\\
   then $\tr(A\ominus B) = \dfrac1n(\tr(A)-m\tr(B))$.
   %for all $A\in \mathbb{F}_{mn}$ and $B\in\mathbb{F}_n$ with $\tr(B)\neq 0$.
  \item
   If $\oslash$ is uniform then $\ominus$ is uniform.\phantom{$\dfrac1n$}
  \item
   If $(kA)\oslash B = k(A\oslash B)$
   %for all $k\in\mathbb{F}$, $A\in\mathbb{F}_{mn}$ and $B\in\mathbb{F}_n$,\\
   then $(kA)\ominus(kB) = k(A\ominus B)$.\phantom{$\dfrac1n$}
   %for all $k\in\mathbb{F}$, $A\in\mathbb{F}_{mn}$ and $B\in\mathbb{F}_n$.
  \item
   If $(A+C)\oslash B = A\oslash B+C\oslash B$
   %for all $A,C\in \mathbb{F}_{mn}$ and $B\in\mathbb{F}_n$,\\
   then $(A+C)\ominus(B+D) = A\ominus B+C\ominus D$.\phantom{$\dfrac1n$}
   %for all $A,C\in \mathbb{F}_{mn}$ and $B,D\in\mathbb{F}_n$.
 \end{enumerate}
\end{proposition}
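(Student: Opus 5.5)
Each part follows directly from the definition $X\ominus B=(X-I_m\otimes B)\oslash I_n$, by transferring the hypothesis on $\oslash$ through this formula. We take the parts in turn.

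For (a), we use $(I_m\otimes B)^T=I_m\otimes B^T$ and $I_n^T=I_n$. Then
\begin{equation*}
 (A\ominus B)^T=((A-I_m\otimes B)\oslash I_n)^T=(A^T-I_m\otimes B^T)\oslash I_n=A^T\ominus B^T .
\end{equation*}

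For (b), we apply the trace hypothesis with divisor $I_n$. This is legitimate because the hypothesis is read as holding whenever $\tr(B)\neq 0$, and we need $\tr(I_n)=n\neq 0$ in $\mathbb{F}$. This is the one point needing care: the formula $\frac1n(\cdots)$ already presupposes that $n$ is invertible, so we implicitly assume $\chr(\mathbb{F})\nmid n$. With that, and using $\tr(I_m\otimes B)=m\tr(B)$ (a special case of \eqref{eq:S2}), we get
\begin{equation*}
 \tr(A\ominus B)=\frac{\tr(A-I_m\otimes B)}{n}=\frac1n(\tr A-m\tr B).
\end{equation*}

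Part (c) is exactly Proposition \ref{prop:ukqdefinesukd}, so we simply cite it.

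For (d), note that $kA-I_m\otimes(kB)=k(A-I_m\otimes B)$. Applying the homogeneity hypothesis with divisor $I_n$ gives $k(A\ominus B)$. The case $k=0$ needs no separate treatment, since the hypothesis is assumed for all $k$.

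For (e), we rewrite $A+C-I_m\otimes(B+D)=(A-I_m\otimes B)+(C-I_m\otimes D)$, using bilinearity of $\otimes$. Applying additivity of $\oslash$ with divisor $I_n$ then splits the expression as $A\ominus B+C\ominus D$.

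Each hypothesis is only ever used with divisor $I_n$, so even weaker assumptions would suffice. There is no real obstacle in this argument; the only subtlety is the characteristic caveat in (b).
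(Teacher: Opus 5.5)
Your proposal is correct and follows the paper's proof: (a) and (b) by direct substitution of divisor $I_n$ into the hypothesis, (c) by citing Proposition~\ref{prop:ukqdefinesukd}, and (d), (e) by the same one-line linearity computations that appear in that proposition's proof (which the paper simply points to). Your remark that (b) requires $\chr(\mathbb{F})\nmid n$, so that $\tr(I_n)=n\neq 0$, is a caveat the paper leaves implicit in the factor $\frac1n$.
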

\begin{proof}
 The proof of (c) appears in Proposition \ref{prop:ukqdefinesukd}, which also proves (d) and (e). The proofs of $(a)$ and $(b)$ each follow
 by straightforward calculation:
 \begin{multline*}
  (A\ominus B)^T = ((A-I_m\otimes B)\oslash I_n)^T = (A-I_m\otimes B)^T\oslash I_n \\
                 = (A^T-I_m\otimes B^T)\oslash I_n = A^T\ominus B^T,
 \end{multline*}
 \begin{multline*}
  \tr(A\ominus B) = \tr((A-I_m\otimes B)\oslash I_n) \\
  = \dfrac{\tr(A-I_m\otimes B)}n
  = \dfrac1n(\tr(A)-m\tr(B)).
  \qedhere
 \end{multline*}
\end{proof}

\begin{remark}[Duality 2]
 To some extent, the duality described in Remark $\ref{rem:kddefineskq}$ is also present here.
 Let $\oslash$ be a Kronecker quotient such that
 \begin{equation*}
  M\oslash B = (M(0_m\oplus B)^{-1})\ominus 0_n
 \end{equation*}
 for all $M\in\mathbb{F}_{mn}$, invertible $B\in\mathbb{F}_n$ and $m,n\in\mathbb{N}$,
 where $\ominus$ is a Kronecker difference.
 If $\operatorname{char}(\mathbb{F})\neq 2$, let $\oslash_s$ be a Kronecker quotient such that
 \begin{equation*}
  M\oslash_s B = \frac12\left[(M(0_m\oplus B)^{-1})\ominus 0_n+((0_m\oplus B)^{-1}M)\ominus 0_n\right]
 \end{equation*}
 for all $M\in\mathbb{F}_{mn}$, invertible $B\in\mathbb{F}_n$ and $m,n\in\mathbb{N}$.
 \begin{enumerate}[label=(\alph*)]
  \item
   If $(kA)\ominus(kB) = k(A\ominus B)$
   %for all $k\in\mathbb{F}$, $A\in\mathbb{F}_{mn}$ and $B\in\mathbb{F}_n$,
   then
     $(kA)\oslash B = k(A\oslash B)$.\phantom{$\dfrac1n$}
     %for all $k\in\mathbb{F}$, $A\in\mathbb{F}_{mn}$ and invertible $B\in\mathbb{F}_n$,
  \item
   If $(A+C)\ominus(B+D) = A\ominus B+C\ominus D$
   %for all $A,C\in \mathbb{F}_{mn}$ and $B,D\in\mathbb{F}_n$,
     then $(A+C)\oslash B = A\oslash B+C\oslash B$.\phantom{$\dfrac1n$}
   %for all $A,C\in \mathbb{F}_{mn}$ and invertible $B\in\mathbb{F}_n$,
  \item
   If %$\operatorname{char}(\mathbb{F})\neq 2$ and 
   $(A\ominus B)^T = A^T\ominus B^T$
   %for all $A\in\mathbb{F}_{mn}$ and $B\in\mathbb{F}_n$, \\
   then $(A\oslash_s B)^T = A^T\oslash_s B^T$.\phantom{$\dfrac1n$}
   %for all $A\in\mathbb{F}_{mn}$ and invertible $B\in\mathbb{F}_n$,
  \item
   If $(kA)\ominus(kB) = k(A\ominus B)$
     %if $\operatorname{char}(\mathbb{F})\neq 2$ then
     then
     $(kA)\oslash_s B = k(A\oslash_s B)$.\phantom{$\dfrac1n$}
     %for all $k\in\mathbb{F}$, $A\in\mathbb{F}_{mn}$ and invertible $B\in\mathbb{F}_n$,
  \item
   If $(A+C)\ominus(B+D) = A\ominus B+C\ominus D$
   %  if $\operatorname{char}(\mathbb{F})\neq 2$ then
     then $(A+C)\oslash_s B = A\oslash_s B+C\oslash_s B$.\phantom{$\dfrac1n$}
   %for all $A,C\in \mathbb{F}_{mn}$ and invertible $B\in\mathbb{F}_n$.
 \end{enumerate}
\end{remark}

\section{A canonical form}
\label{sec:canon}%

Now we turn our attention to Kronecker differences satisfying \eqref{eq:D3} and \eqref{eq:D4}.
It follows that such a Kronecker difference is given, according to \eqref{eq:kdlinear}, by a linear 
map $\delta:(\mathbb{F}_m\otimes\mathbb{F}_n)\times\mathbb{F}_n\to \mathbb{F}_m$. Before the proposition, we remind the reader of the definitions of the block trace in Definition \ref{def:blocktrace} and partial trace in Definition \ref{def:partialtrace} on page%s
\ \pageref{def:blocktrace}. %and \pageref{def:partialtrace} respectively.

\begin{proposition}
 \label{prp:canon}%
 Let $\delta:(\mathbb{F}_m\otimes\mathbb{F}_n)\times\mathbb{F}_n\to \mathbb{F}_m$
 be a linear map which satisfies $\delta(A\oplus B,B)=A$ for all $A\in \mathbb{F}_m$ and $B\in\mathbb{F}_n$.
 Then there exists $\alpha\in \mathbb{F}_m\otimes \mathbb{F}_n\otimes \mathbb{F}_m$
 such that
 \begin{equation*}
  \delta(A,B) = \tr_{12}(\alpha^T(A\otimes I_m-I_m\otimes B\otimes I_m)),\qquad
  \tr_{12}(\alpha^T(X\otimes I_n \otimes I_m)) = X,\vphantom{\sum_{i=1}^m}
 \end{equation*}
 for all $A\in\mathbb{F}_m\otimes\mathbb{F}_n$, $B\in\mathbb{F}_n$ and $X\in \mathbb{F}_m$.
\end{proposition}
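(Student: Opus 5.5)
Since $\delta$ is linear on the product space $(\mathbb{F}_m\otimes\mathbb{F}_n)\times\mathbb{F}_n$, I would first split it into its two components, $\delta(A,B)=\delta_1(A)+\delta_2(B)$ with $\delta_1(A):=\delta(A,0_n)$ and $\delta_2(B):=\delta(0_{mn},B)$, both linear. Then I would exploit the defining condition $\delta(X\oplus B,B)=X$ in two special cases.
\begin{itemize}
\item Taking $B=0_n$ gives $X\oplus 0_n=X\otimes I_n$, hence $\delta_1(X\otimes I_n)=X$ for all $X\in\mathbb{F}_m$.
\item Taking $X=0_m$ gives $0_m\oplus B=I_m\otimes B$, hence $\delta_1(I_m\otimes B)+\delta_2(B)=0_m$, i.e.\ $\delta_2(B)=-\delta_1(I_m\otimes B)$.
\end{itemize}
Together these reduce everything to one linear map: $\delta(A,B)=\delta_1(A-I_m\otimes B)$, subject to the normalisation $\delta_1(X\otimes I_n)=X$.

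The remaining step is a representation lemma. Every linear map $L:\mathbb{F}_m\otimes\mathbb{F}_n\to\mathbb{F}_m$ can be written as $L(Y)=\tr_{12}(\alpha^T(Y\otimes I_m))$ for a suitable $\alpha\in\mathbb{F}_m\otimes\mathbb{F}_n\otimes\mathbb{F}_m$. I would give $\alpha$ explicitly, with $E_{ij}\in\mathbb{F}_m$ and $E_{pq}\in\mathbb{F}_n$:
\begin{equation*}
 \alpha := \sum_{i,j=1}^m\sum_{p,q=1}^n E_{ij}\otimes E_{pq}\otimes \bigl(\delta_1(E_{ij}\otimes E_{pq})\bigr)^T .
\end{equation*}
Then $\alpha^T=\sum (E_{ij}\otimes E_{pq})^T\otimes\delta_1(E_{ij}\otimes E_{pq})$. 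Using the property of the partial trace from Definition~\ref{def:partialtrace}, namely $\tr_{12}(P\otimes Q)=\tr(P)\,Q$ for $P\in\mathbb{F}_m\otimes\mathbb{F}_n$ and $Q\in\mathbb{F}_m$, we get
\begin{equation*}
 \tr_{12}(\alpha^T(Y\otimes I_m))=\sum_{i,j,p,q}\tr\bigl((E_{ij}\otimes E_{pq})^TY\bigr)\,\delta_1(E_{ij}\otimes E_{pq}).
\end{equation*}
Here $\tr((E_{ij}\otimes E_{pq})^TY)$ is exactly the $((i-1)n+p,(j-1)n+q)$ entry of $Y$, by \eqref{eq:kronent}. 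The sum is therefore the expansion of $Y$ in the standard basis pushed through $\delta_1$, and equals $\delta_1(Y)$ by linearity.

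Finally, substitute $Y=A-I_m\otimes B$. Since $(A-I_m\otimes B)\otimes I_m=A\otimes I_m-I_m\otimes B\otimes I_m$, this gives the first formula of the proposition. Substituting $Y=X\otimes I_n$ and using $\delta_1(X\otimes I_n)=X$ gives the second identity, $\tr_{12}(\alpha^T(X\otimes I_n\otimes I_m))=X$.

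The only step I expect to need care is the bookkeeping in the representation lemma: the transposes, the index ordering of $E_{ij}\otimes E_{pq}$ against \eqref{eq:kronent}, and checking that the appendix's $\tr_{12}$ traces out exactly the first two tensor factors. The rest is formal.
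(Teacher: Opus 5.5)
Your proof is correct and follows essentially the paper's route. Your $\alpha$, whose third factor is $\delta(E_{ij}\otimes E_{pq},0)^T$, is exactly the paper's $\alpha=\sum\alpha_{ij;kl;rs}\,E_{ij}\otimes E_{kl}\otimes E_{sr}$, and your two reductions $\delta(X\otimes I_n,0)=X$ and $\delta(I_m\otimes B,B)=0$ are the same ones the paper uses; the only difference is that you write $\delta(0,B)=-\delta(I_m\otimes B,0)$ directly, whereas the paper introduces the coefficient tensor $\beta$ and identifies it as $-\tr_1(\alpha)$ via $\Btr$.
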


\begin{proof}
 A basis for $(\mathbb{F}_m\otimes\mathbb{F}_n)\times\mathbb{F}_n$ is given by
 $(E_{ij}\otimes E_{kl}, 0)$ and $(0,E_{kl})$ for $i,j\in\{1,\ldots,m\}$ and $k,l\in\{1,\ldots,n\}$.
 Then $\delta$ is defined by its action on this basis:
 \begin{equation*}
  \delta(E_{ij}\otimes E_{kl}, 0) := \sum_{r,s=1}^m \alpha_{ij;kl;rs}E_{rs}, \qquad
  \delta(0,E_{kl}) := \sum_{r,s=1}^{m} \beta_{kl;rs}E_{rs}.
 \end{equation*}
 Define
 \begin{equation*}
  \alpha=\displaystyle\sum_{i,j,r,s=1}^m\sum_{k,l=1}^n \alpha_{ij;kl;rs} E_{ij}\otimes E_{kl}\otimes E_{sr}, \qquad
  \beta=\displaystyle\sum_{k,l=1}^n\sum_{r,s=1}^m \beta_{kl;rs} E_{kl}\otimes E_{sr},
 \end{equation*}
 so that
 \begin{equation*}
  \delta(E_{ij}\otimes E_{kl}, 0) = \tr_{12}(\alpha^T(E_{ij}\otimes E_{kl}\otimes I_m)),\qquad
  \delta(0,E_{kl}) = \Btr(\beta^T(E_{kl}\otimes I_m)).\vphantom{\sum_{i=1}^m}
 \end{equation*}
 Since $(A\oplus B,B) = (A\oplus 0,0) + (0\oplus B,B)$,
 the condition
 \begin{equation*}
  \delta(A\oplus B,B) = \delta(A\oplus 0,0) + \delta(0\oplus B,B)
                      = \delta(A\otimes I_n,0) + \delta(I_m\otimes B, B)
                      = A\vphantom{\sum_{i=1}^m}
 \end{equation*}
 reduces by linearity to
 \begin{equation*}
  \delta(E_{ij}\otimes I_n,0)=E_{ij},\qquad \delta(I_m\otimes E_{kl},E_{kl})=0.
 \end{equation*}
 The second equation yields that
 \begin{equation*}
  \delta(I_m\otimes E_{kl},E_{kl})=0
  \quad\iff\quad
  \sum_{i=1}^m \alpha_{ii;kl;rs}+\beta_{kl;rs} = 0.
 \end{equation*}
 It follows that $\beta=-\tr_1(\alpha)$. Writing $A$ and $B$ in the form
 \begin{equation*}
  A = \sum_{i,j=1}^m\sum_{k,l=1}^n a_{ij;kl}E_{ij}\otimes E_{kl},\qquad
  B = \sum_{k,l=1}^n b_{kl}E_{kl}
 \end{equation*}
 yields
 \begin{align*}
  \lefteqn{\delta(A,B)} &\\
              &= \sum_{i,j=1}^m\sum_{k,l=1}^n a_{ij;kl}\delta(E_{ij}\otimes E_{kl},0)
                  + \sum_{k,l=1}^n b_{kl}\delta(0,E_{kl}) \\
              &= \sum_{i,j=1}^m\sum_{k,l=1}^na_{ij;kl}\tr_{12}(\alpha^T(E_{ij}\otimes E_{kl}\otimes I_m))
                  - \sum_{k,l=1}^n b_{kl}\Btr(\tr_1(\alpha)^T(E_{kl}\otimes I_{m})) \\
              &= \sum_{i,j=1}^m\sum_{k,l=1}^na_{ij;kl}\tr_{12}(\alpha^T(E_{ij}\otimes E_{kl}\otimes I_m))
                  - \sum_{k,l=1}^n b_{kl}\tr_{12}(\alpha^T(I_m\otimes E_{kl}\otimes I_{m})) \\
              &= \tr_{12}(\alpha^T (A\otimes I_m - I_m\otimes B\otimes I_m)).\vphantom{\sum_{i=1}^m}
 \end{align*}
 Applying this formula to the condition $\delta(E_{ij}\otimes I_n,0)=E_{ij}$ provides
 \begin{equation*}
  \delta(E_{ij}\otimes I_n,0)=E_{ij}
  \quad\iff\quad
  \tr_{12}(\alpha^T(E_{ij}\otimes I_n \otimes I_m - 0)) = E_{ij}.\vphantom{\sum_{i=1}^m}
 \end{equation*}
 Thus, $\tr_{12}(\alpha^T(X\otimes I_n \otimes I_m)) = X$ for all $A\in \mathbb{F}_m$.
\end{proof}

\begin{remark}
 \label{rem:B=0}%
 When $\delta$ has the form
 \begin{align*}
  \delta(A,B) &= \tr_{12}(\alpha^T(A\otimes I_m-I_m\otimes B\otimes I_m)) \\
              &= \tr_{12}(\alpha^T[(A-I_m\otimes B)\otimes I_m])
               = \delta(A-I_m\otimes B,0)
 \end{align*}
 for all $A\in\mathbb{F}_m\otimes\mathbb{F}_n$ and $B\in\mathbb{F}_n$,
 $\delta$ is completely defined by its action on the subspace
 \begin{equation*}
  \{\, (A,0)\,:\,A\in\mathbb{F}_m\otimes\mathbb{F}_n\,\}.
 \end{equation*}
 Then we may write
 \begin{equation*}
  A\ominus B = (A-I_m\otimes B)\ominus 0.
 \end{equation*}
\end{remark}

% Is it true for linear $\oslash$ that
%  $$A\oslash B = ((I_m\otimes B)^{-1}A)\oslash I_n?$$
%  This needs some sort of homomorphic property?
%  \begin{align*}
%      A\oslash B = [(I_m\otimes B)(I_m\otimes B)^{-1}A]\oslash[BI_n]&=[(I_m\otimes B)\oslash B][((I_m\otimes B)^{-1}A)\oslash I_n] \\
%      &=((I_m\otimes B)^{-1}A)\oslash I_n.
%  \end{align*}

Thus, some of the properties \eqref{eq:D1} -- \eqref{eq:D5}, may be reformulated in terms of the $n\times n$ zero matrix $0_n\in\mathbb{F}_n$,
\begin{gather}
 \tag{D1-0}\label{eq:D1-0}
 (A\ominus 0_n)^T = A^T\ominus 0_n \phantom{\dfrac1n}\\
 \tag{D2-0}\label{eq:D2-0}
 \tr(A\ominus 0_n) = \frac1n(\tr A) \\
% \tag{D3-0}\label{eq:D3-0}
% (kA)\ominus 0_n = k(A\ominus 0_n) \\
% \tag{D4-0}\label{eq:D4-0}
% (A+B)\ominus 0_n = (A\ominus 0_n)+(B\ominus 0_n)\\
 \tag{D5-0}\label{eq:D5-0}
 (A\ominus 0_m)\ominus 0_n = A\ominus 0_{mn}\phantom{\dfrac1n}
% \tag{D6-0}\label{eq:D6-0}
% [A\ominus 0_n,C\ominus 0_n] = [A,C]\ominus 0_n\\
% \tag{D7-0}\label{eq:D7-0}
% \exp(A\ominus 0_n)= \exp(A)\oslash I_n\\
\end{gather}

\begin{lemma}
 \label{lem:zerosimp}%
 If $\ominus:(\mathbb{F}_m\otimes\mathbb{F}_n)\times\mathbb{F}_n\to\mathbb{F}_m$ is such that
 \eqref{eq:D3} and \eqref{eq:D4} hold true. Then,
 \begin{enumerate}[label=(\alph*)]
  \item $(A\ominus B)^T=A^T\ominus B^T$ (for all $A,B$) $\iff$ $(A\ominus 0_n)^T=A^T\ominus 0_n$ \\ (for all $A$),
  \item $\tr(A\ominus B) = \dfrac1n(\tr(A)-m\tr(B))$ (for all $A,B$) $\iff$ $\tr(A\ominus 0_n) = \dfrac1n(\tr A)$       (for all $A$),
  \item \phantom{\rlap{$\dfrac1n$}}$(A\ominus B)\ominus C=A\ominus(C\oplus B)$ (for all $A,B,C$) $\iff$ $(A\ominus 0_m)\ominus 0_n = A\ominus 0_{mn}$ (for all $A$),
 \end{enumerate}
% Then \textup{(D$j$)} holds if and only if
% \textup{(D$j$-0)} holds, for $j\in\{1,2,5\}$.
\end{lemma}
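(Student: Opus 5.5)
The plan is to use Remark~\ref{rem:B=0}, i.e.\ the identity $A\ominus B=(A-I_m\otimes B)\ominus 0_n$. Since \eqref{eq:D3} and \eqref{eq:D4} hold, $\delta$ is linear. Then $\delta(A,B)=\delta(A-I_m\otimes B,0)+\delta(I_m\otimes B,B)$, and $\delta(I_m\otimes B,B)=\delta(0_m\oplus B,B)=0_m$, which gives the identity without needing Proposition~\ref{prp:canon}. In each part the forward implication ($\Rightarrow$) is immediate by specializing $B=0_n$ (in (c), $B=0_m$ and $C=0_n$, using $0_n\oplus 0_m=0_{mn}$). All the work is in the converse directions, which reduce the general statement to the zero case via this identity.

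For (a), assume the zero-case identity and compute
\[(A\ominus B)^T=((A-I_m\otimes B)\ominus 0_n)^T=(A-I_m\otimes B)^T\ominus 0_n=(A^T-I_m\otimes B^T)\ominus 0_n=A^T\ominus B^T,\]
using $(I_m\otimes B)^T=I_m\otimes B^T$. For (b), similarly
\[\tr(A\ominus B)=\tfrac1n\tr(A-I_m\otimes B)=\tfrac1n(\tr A-m\tr B),\]
since $\tr(I_m\otimes B)=m\tr B$.

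Part (c) is the main obstacle, because it involves two nested differences of different sizes: $A\in\mathbb{F}_{p}\otimes\mathbb{F}_m\otimes\mathbb{F}_n$, $B\in\mathbb{F}_n$, $C\in\mathbb{F}_m$. The orderings must be tracked carefully. The outer difference removes $C$ from $\mathbb{F}_p\otimes\mathbb{F}_m$, and $C\oplus B\in\mathbb{F}_m\otimes\mathbb{F}_n$. (The statement writes $0_m$ and $0_n$ in a slightly different role, but it means the analogous zero-case identity $(A\ominus 0)\ominus 0=A\ominus 0$.) I would apply the identity twice:
\[(A\ominus B)\ominus C=\big((A-I_{pm}\otimes B)\ominus 0\big)\ominus C=\big(((A-I_{pm}\otimes B)\ominus 0)-I_p\otimes C\big)\ominus 0.\]
Next, by linearity of the inner $\ominus 0$ and the defining property, $I_p\otimes C=(I_p\otimes C\otimes I_n)\ominus 0$, since $I_p\otimes C\otimes I_n=(I_p\otimes C)\oplus 0_n$. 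The expression therefore becomes
\[\big((A-I_{pm}\otimes B-I_p\otimes C\otimes I_n)\ominus 0\big)\ominus 0.\]
By the zero-case hypothesis this equals $(A-I_p\otimes(C\otimes I_m... ))$; more precisely it equals $(A-I_p\otimes(C\otimes I_n+I_m\otimes B))\ominus 0_{mn}$. Using the identity once more, this is $A\ominus(C\oplus B)$. The points to verify carefully are the index conventions $I_{pm}\otimes B=I_p\otimes I_m\otimes B$ and the linearity of each size-indexed operation, which holds since \eqref{eq:D3} and \eqref{eq:D4} are assumed for all sizes.
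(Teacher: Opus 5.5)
Your proposal is correct and follows essentially the same route as the paper: the forward directions come from setting the second argument to zero, and for the converses you rewrite $A\ominus B=(A-I_m\otimes B)\ominus 0$ and, in (c), write $I_p\otimes C$ as $((I_p\otimes C)\oplus 0)\ominus 0$ before invoking the zero-case hypothesis. The only difference is how you get that identity: you derive it directly from \eqref{eq:D3}, \eqref{eq:D4} and $(0_m\oplus B)\ominus B=0_m$, whereas the paper cites Remark~\ref{rem:B=0}, which rests on the canonical form of Proposition~\ref{prp:canon}, so your argument is slightly more self-contained.
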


\begin{proof}
 We will show that (D$j$) $\iff$ (D$j$-0).
 Clearly we need only show that (D$j$-0) $\implies$ (D$j$), since (D$j$) $\implies$ (D$j$-0) follows by substituting $B=0$ and $C=0$ in each formula. Each proof follows by Remark \ref{rem:B=0}.\\
 \noindent\eqref{eq:D1-0} $\implies$ \eqref{eq:D1}:
 \begin{align*}
  (A\ominus B)^T
   &= ((A-I_m\otimes B)\ominus 0_m)^T
    = (A-I_m\otimes B)^T\ominus 0_m \\
   &= (A^T-I_m\otimes B^T)\ominus 0_m
    = A^T\ominus B^T.
 \end{align*}

 \noindent\eqref{eq:D2-0} $\implies$ \eqref{eq:D2}:
 \begin{align*}
  \tr(A\ominus B) &= \tr((A-I_m\otimes B)\ominus 0_m) \\
                  &= \dfrac1n\tr(A-I_m\otimes B) = \dfrac1n(\tr A-m\tr B).
 \end{align*}

 \noindent\eqref{eq:D5-0} $\implies$ \eqref{eq:D5}:\\
  For $A\in\mathbb{F}_{pnm}$, $B\in\mathbb{F}_m$ and $C\in\mathbb{F}_n$,
  \begin{align*}
   (A\ominus B)\ominus C
    &= ((A-I_{pn}\otimes B)\ominus 0_m)\ominus C \\
    &= (((A-I_{pn}\otimes B)\ominus 0_m)-I_p\otimes C)\ominus 0_n \\
    &= (((A-I_{pn}\otimes B)\ominus 0_m)-((I_p\otimes C)\oplus 0_m)\ominus 0_m)\ominus 0_n \\
    &= ((A-I_{pn}\otimes B-(I_p\otimes C)\oplus 0_m)\ominus 0_m)\ominus 0_n \\
    &= (A-I_{pn}\otimes B-(I_p\otimes C)\oplus 0_m)\ominus 0_{mn} \\
    &= (A-(I_p\otimes I_n\otimes B+I_p\otimes C\otimes I_m))\ominus 0_{mn} \\
    &= (A-I_p\otimes (C\oplus B))\ominus 0_{mn} \\
    &= A\ominus (C\oplus B). \qedhere
  \end{align*}

% \noindent\eqref{eq:D6-0} $\implies$ \eqref{eq:D6}:\\
% \begin{align*}
%  [A\ominus B,C\ominus D]
%   &= [(A-I_m\otimes B)\ominus 0_m,(C-I_m\otimes D)\ominus 0_m] \\
%   &= [A-I_m\otimes B,C-I_m\otimes D]\ominus 0_m \\
%   &= ([A,C] - I_m\otimes [D,B])\ominus 0_m {\color{red} - [A,I_m\otimes D]\ominus 0_m + [C,I_m\otimes B]\ominus 0_m} \\
%   &= [A,C] \ominus [D,B] {\color{red} - [A,I_m\otimes D]\ominus 0_m + [C,I_m\otimes B]\ominus 0_m.}
% \end{align*}
%
% \noindent\eqref{eq:D7-0} $\implies$ \eqref{eq:D7}:\\
%  \begin{align*}
%   \exp(A\ominus B)
%    &= \exp((A-I_m\otimes B)\ominus 0_m)
%     = \exp(A-I_m\otimes B)\oslash I_n
%     = {\color{red}???}
%  \end{align*} 
\end{proof}

\begin{lemma}
 \label{lem:canon3}%
 Let $\gamma_1\in\mathbb{F}_n$ and $\gamma_0\in\mathbb{F}_m\otimes\mathbb{F}_n\otimes\mathbb{F}_m$
 such that $\tr(\gamma_1)=1$ and $\tr_2(\gamma_0)=0$.
 Then $\delta:(\mathbb{F}_m\otimes\mathbb{F}_n)\times\mathbb{F}_n\to \mathbb{F}_m$ given by
 \begin{equation*}
  \delta(A,B) = \tr_{12}(\alpha^T(A\otimes I_m-I_m\otimes B\otimes I_m)),\qquad
  \alpha:= \sum_{i,j=1}^m E_{ij}\otimes \gamma_1\otimes E_{ji} + \gamma_0
 \end{equation*}
 for all $A\in\mathbb{F}_m\otimes\mathbb{F}_n$ and $B\in\mathbb{F}_n$ satisfies $\delta(A\oplus B,B)=A$.
\end{lemma}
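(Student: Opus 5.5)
Observe first that, by the computation in Remark~\ref{rem:B=0}, $\delta(A,B)=\delta(A-I_m\otimes B,0)$, and that $(A\oplus B)-I_m\otimes B=A\otimes I_n$. Hence $\delta(A\oplus B,B)=A$ for all $A,B$ is equivalent to the single condition
\begin{equation*}
 \tr_{12}(\alpha^T(X\otimes I_n\otimes I_m))=X\qquad\text{for all }X\in\mathbb{F}_m,
\end{equation*}
which is exactly the second condition in Proposition~\ref{prp:canon}. The plan is therefore to verify this identity for the given $\alpha$. Since $\tr_{12}$, the transpose and the product are all linear, it splits into a contribution from $\sum_{i,j}E_{ij}\otimes\gamma_1\otimes E_{ji}$ and a contribution from $\gamma_0$. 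By linearity in $X$, it suffices to take $X=E_{pq}$.

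For the first part, $\alpha_1^T=\sum_{i,j}E_{ji}\otimes\gamma_1^T\otimes E_{ij}$. Then
\begin{equation*}
 \alpha_1^T(E_{pq}\otimes I_n\otimes I_m)=\sum_{i,j}E_{ji}E_{pq}\otimes\gamma_1^T\otimes E_{ij}=\sum_{j}E_{jq}\otimes\gamma_1^T\otimes E_{pj},
\end{equation*}
using $E_{ji}E_{pq}=\delta_{ip}E_{jq}$. Taking $\tr_{12}$, i.e.\ the trace over the first two factors as in Definition~\ref{def:partialtrace}, gives $\sum_j\delta_{jq}\tr(\gamma_1^T)E_{pj}=\tr(\gamma_1)E_{pq}=E_{pq}$, because $\tr(\gamma_1)=1$.

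For the $\gamma_0$ part, write $\gamma_0=\sum_k P_k\otimes Q_k\otimes R_k$. Then
\begin{equation*}
 \tr_{12}(\gamma_0^T(X\otimes I_n\otimes I_m))=\sum_k\tr(P_k^TX)\tr(Q_k^T)R_k^T.
\end{equation*}
This vanishes because $\tr_2(\gamma_0)=\sum_k\tr(Q_k)P_k\otimes R_k=0$. Indeed, applying the linear map $P\otimes R\mapsto\tr(P^TX)R^T$ to $\tr_2(\gamma_0)=0$ yields exactly the expression above.

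The main obstacle is keeping the index conventions of $\tr_{12}$, $\tr_2$ and the transpose of a Kronecker product straight, as defined in the appendix, so that the $E_{ji}$ and $E_{ij}$ factors pair correctly. Once that bookkeeping is fixed, both computations are routine. Combining them gives the required identity, and by the first paragraph this yields $\delta(A\oplus B,B)=A$.
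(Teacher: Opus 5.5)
Your proof is correct and follows essentially the same route as the paper: you reduce to showing $\tr_{12}(\alpha^T(X\otimes I_n\otimes I_m))=X$, compute the $\sum_{i,j}E_{ij}\otimes\gamma_1\otimes E_{ji}$ contribution directly using $\tr(\gamma_1)=1$, and show the $\gamma_0$ contribution vanishes because $\tr_2(\gamma_0)=0$. The differences are cosmetic. You justify the reduction by the direct identity $(A\oplus B)-I_m\otimes B=A\otimes I_n$ (via Remark~\ref{rem:B=0}), which is cleaner than the paper's appeal to Proposition~\ref{prp:canon}, and you check the $\gamma_0$ term inline instead of citing Lemma~\ref{lem:tracezero}.
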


\begin{proof}
 By Proposition \ref{prp:canon}, it suffices to show that $\tr_{12}(\alpha^T(X\otimes I_n \otimes I_m)) = X$
 for all $X\in\mathbb{F}_m$. Let $X=(x_{kl})$ be given by the entries $x_{kl}$ in row $k$ and column $l$
 for $k,l\in\{1,\ldots,m\}$. Using Lemma \ref{lem:tracezero}, we have
 \begin{align*}
  \lefteqn{\tr_{12}(\alpha^T(X\otimes I_n \otimes I_m))}\qquad & \\
   &= \tr_{12}\left(\left(\sum_{i,j=1}^m E_{ji}\otimes \gamma_1^T\otimes E_{ij} + \gamma_0^T\right)
                    \left(\sum_{k,l=1}^m x_{kl}E_{kl}\otimes I_n \otimes I_m\right)\right) \\
   &= \tr_{12}\left(\left(\sum_{k,l=1}^m\sum_{i,j=1}^n x_{kl}\delta_{ik}E_{jl}\otimes \gamma_1^T\otimes E_{ij}\right)\right) \\
   &\qquad + \tr_{12}\left(\gamma_0^T\left(\sum_{k,l=1}^m x_{kl}E_{kl}\otimes I_n \otimes I_m\right)\right) \\
   &= \sum_{k,l=1}^m\sum_{i,j=1}^n \delta_{ik}\delta_{jl}x_{kl}E_{ij}
    = \sum_{k,l=1}^m x_{kl}E_{kl} = X. \qedhere
 \end{align*}
\end{proof}

\begin{theorem}
 \label{thm:canon2}%
 Let $\mathbb{F}$ denote a field
 and $\delta:(\mathbb{F}_m\otimes\mathbb{F}_n)\times\mathbb{F}_n\to \mathbb{F}_m$
 be a linear map satisfying $\delta(A\oplus B,B)=A$ for all $A\in \mathbb{F}_m$ and $B\in\mathbb{F}_n$.
 Then there exists $\gamma\in\mathbb{F}_m\otimes\mathbb{F}_n\otimes\mathbb{F}_m$ such that $\tr_2(\gamma)=0$ and,
 \begin{equation*}
  \delta(A,B) = \tr_{12}(\alpha^T(A\otimes I_m-I_m\otimes B\otimes I_m)),\qquad
  \alpha:= \sum_{i,j=1}^m E_{ij}\otimes E_{11}\otimes E_{ji} + \gamma,
 \end{equation*}
 for all $A\in\mathbb{F}_m\otimes\mathbb{F}_n$ and $B\in\mathbb{F}_n$.
\end{theorem}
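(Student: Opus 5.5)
Start from Proposition~\ref{prp:canon}: since $\delta$ is linear and satisfies $\delta(A\oplus B,B)=A$, there is some $\alpha\in\mathbb{F}_m\otimes\mathbb{F}_n\otimes\mathbb{F}_m$ with
\begin{equation*}
 \delta(A,B)=\tr_{12}(\alpha^T(A\otimes I_m-I_m\otimes B\otimes I_m)),\qquad
 \tr_{12}(\alpha^T(X\otimes I_n\otimes I_m))=X
\end{equation*}
for all $X\in\mathbb{F}_m$. Set $\alpha_0:=\sum_{i,j=1}^m E_{ij}\otimes E_{11}\otimes E_{ji}$ and define $\gamma:=\alpha-\alpha_0$. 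Then the displayed formula for $\delta$ holds with $\alpha=\alpha_0+\gamma$ by construction. It remains only to show that $\tr_2(\gamma)=0$.

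Write $\alpha=\sum_{i,j,r,s}\sum_{k,l}\alpha_{ij;kl;rs}E_{ij}\otimes E_{kl}\otimes E_{sr}$ as in the proof of Proposition~\ref{prp:canon}. Since $\tr_2(\alpha_0)=\sum_{i,j}E_{ij}\otimes E_{ji}$ (because $\tr E_{11}=1$), the claim $\tr_2(\gamma)=0$ is equivalent to
\begin{equation*}
 \tr_2(\alpha)=\sum_{i,j=1}^m E_{ij}\otimes E_{ji},
\end{equation*}
that is, $\sum_{k}\alpha_{ij;kk;rs}=\delta_{ir}\delta_{js}$.

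To get this, evaluate the normalisation condition on $X=E_{ij}$. In the proof of Proposition~\ref{prp:canon} this condition was derived from $\delta(E_{ij}\otimes I_n,0)=E_{ij}$. Expanding $E_{ij}\otimes I_n=\sum_k E_{ij}\otimes E_{kk}$ and using the defining expansion of $\delta$ on the basis gives
\begin{equation*}
 \sum_{r,s}\Bigl(\sum_k\alpha_{ij;kk;rs}\Bigr)E_{rs}=E_{ij},
\end{equation*}
which is exactly the required identity. Equivalently, one can use the partial-trace facts from the appendix (Lemma~\ref{lem:tracezero}, which is used in the same way in Lemma~\ref{lem:canon3}): $\tr_{12}(\alpha^T(X\otimes I_n\otimes I_m))=\tr_1(\tr_2(\alpha)^T(X\otimes I_m))$. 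With this, the condition reads $\tr_1(\tr_2(\alpha)^T(X\otimes I_m))=X$ for all $X$, and this pins down $\tr_2(\alpha)$ uniquely as the swap element $\sum E_{ij}\otimes E_{ji}$. For uniqueness, note that the map $\beta\mapsto[X\mapsto\tr_1(\beta^T(X\otimes I_m))]$ from $\mathbb{F}_m\otimes\mathbb{F}_m$ to $\mathrm{End}(\mathbb{F}_m)$ is a linear bijection (dimension count $m^4$, injective on basis elements).

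The main obstacle is purely bookkeeping: keeping the index conventions straight, in particular the transposed $E_{sr}$ in the third factor and the transpose $\alpha^T$. This determines whether the identity element is $\sum E_{ij}\otimes E_{ji}$ rather than $\sum E_{ij}\otimes E_{ij}$. Lemma~\ref{lem:canon3} gives a consistency check: with $\gamma_1=E_{11}$ it shows that $\alpha_0$ itself satisfies the normalisation. So $\gamma=\alpha-\alpha_0$ satisfies the homogeneous condition $\tr_{12}(\gamma^T(X\otimes I_n\otimes I_m))=0$ for all $X$, and by the bijection above this is equivalent to $\tr_2(\gamma)=0$. This is the cleanest route, and it avoids matching explicit index conventions.
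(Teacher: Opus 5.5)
Your proposal is correct and is essentially the paper's proof. Both take $\alpha$ from Proposition~\ref{prp:canon}, read off from the normalisation at $X=E_{ij}$ that $\tr_2(\alpha)=\sum_{i,j}E_{ij}\otimes E_{ji}$, and set $\gamma=\alpha-\sum_{i,j}E_{ij}\otimes E_{11}\otimes E_{ji}$; the paper builds this same $\gamma$ blockwise via $\gamma_{ij}=\alpha_{ij;ji}-E_{11}$. Your alternative route (Lemma~\ref{lem:canon3} as a particular solution, plus uniqueness of $\tr_2(\alpha)$) is also valid. The map $\beta\mapsto[X\mapsto\tr_1(\beta^T(X\otimes I_m))]$ sends the basis $E_{ab}\otimes E_{cd}$ onto the basis $\{X\mapsto (X)_{a,b}E_{dc}\}$ of $\mathrm{End}(\mathbb{F}_m)$, which is the precise form of what your phrase ``injective on basis elements'' should say.
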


\begin{proof}
 By Proposition \ref{prp:canon}, there exists $\alpha\in \mathbb{F}_m\otimes \mathbb{F}_n\otimes \mathbb{F}_m$
 such that
 \begin{equation*}
  \delta(A,B) = \tr_{12}(\alpha^T(A\otimes I_m-I_m\otimes B\otimes I_m)),\qquad
  \tr_{12}(\alpha^T(X\otimes I_n \otimes I_m)) = X,
 \end{equation*}
 for all $A\in\mathbb{F}_m\otimes\mathbb{F}_n$, $B\in\mathbb{F}_n$ and $X\in \mathbb{F}_m$.
 Writing $\alpha$ in the form
 \begin{equation*}
  \alpha = \sum_{i,j=1}^m\sum_{u,v=1}^m E_{ij}\otimes\alpha_{ij;uv}\otimes E_{uv},
  \qquad \alpha_{ij;uv}\in\mathbb{F}_n,
 \end{equation*}
 yields for $X=E_{kl}$,
 \begin{align*}
  \tr_{12}(\alpha^T(E_{kl}\otimes I_n\otimes I_m))
   = \sum_{u,v=1}^m\left(\sum_{i,j=1}^m \delta_{ki}\delta_{lj}\tr(\alpha_{ij;uv})\right)E_{vu}
   = E_{kl}.
 \end{align*}
 Consequently,
 \begin{equation*}
  \tr(\alpha_{kl;uv}) = \delta_{vk}\delta_{ul}.
 \end{equation*}
 It follows that
 \begin{equation*}
  \alpha = \sum_{i,j=1}^m E_{ij}\otimes\alpha_{ij;ji}\otimes E_{ji}
         + \sum_{i,j=1}^m\sum_{u,v=1\atop (i,j)\neq (v,u)}^m E_{ij}\otimes\alpha_{ij;uv}\otimes E_{uv}
 \end{equation*}
 where $\tr(\alpha_{ij;ji})=1$ and $\tr(\alpha_{ij;uv})=0$ for $(i,j)\neq(v,u)$. Now let
 $\gamma_{ij}:=\alpha_{ij;ji}-E_{11}$, so that $\tr(\gamma_{ij})=0$ and noting that
 \begin{equation*}
  \alpha = \sum_{i,j=1}^m E_{ij}\otimes E_{11}\otimes E_{ji} + \gamma
 \end{equation*}
 where
 \begin{equation*}
  \gamma := \sum_{i,j=1}^m E_{ij}\otimes \gamma_{ij}\otimes E_{ji}
     + \sum_{i,j=1}^m\sum_{u,v=1\atop (i,j)\neq (v,u)}^m E_{ij}\otimes\alpha_{ij;uv}\otimes E_{uv}
 \end{equation*}
 completes the proof.
\end{proof}

\begin{remark}
 In the proof above, the choice of the matrix $E_{11}$ was arbitrary, the proof holds
 equally if we substitute any other matrix with unit trace for $E_{11}$.
\end{remark}

\begin{remark}
 \label{rmk:trivial}%
 When $m=1$, Theorem \ref{thm:canon2} reduces to, for all $A,B\in\mathbb{F}_n$,
 \begin{equation*}
  \delta(A,B) = \tr(\alpha^T(A-B))
 \end{equation*}
 where $\tr(\alpha)=1$. When $n=1$, then $\tr_2(\gamma)=\gamma=0$ and $B=b$ is a scalar. Theorem \ref{thm:canon2} reduces to, for all $A\in\mathbb{F}_m$ and $b\in\mathbb{F}$,
 \begin{equation*}
  \delta(A,b) = \tr_{1}(\alpha^T[(A-b I_m)\otimes I_m]),\qquad
  \alpha= \sum_{i,j=1}^m E_{ij}\otimes E_{ji},
 \end{equation*}
 i.e.
 \begin{equation*}
  A\ominus b = \delta(A,b) = \sum_{i,j=1}^m\tr(E_{ij}^T(A-b I_m))E_{ij} = A-bI_m.
 \end{equation*}
\end{remark}

Next we provide a general result similar to Remark \ref{rmk:trivial}, which shows that every Kronecker difference is a difference of (partial) matrix traces, modulo the action of the matrix $\gamma$ in the canonical representation.

\begin{theorem}
 \label{thm:canon}%
 Let $\mathbb{F}$ denote a field with $\chr(\mathbb{F})\nmid n$ and
 let $\delta:(\mathbb{F}_m\otimes\mathbb{F}_n)\times\mathbb{F}_n\to \mathbb{F}_m$
 be a linear map satisfying $\delta(A\oplus B,B)=A$ for all $A\in \mathbb{F}_m$ and $B\in\mathbb{F}_n$.
 Then there exists $\gamma\in\mathbb{F}_m\otimes\mathbb{F}_n\otimes\mathbb{F}_m$ such that $\tr_2(\gamma)=0$ and,
 \begin{equation*}
  \delta(A,B) = \tr_{12}(\alpha^T(A\otimes I_m-I_m\otimes B\otimes I_m)),\qquad
  \alpha:= \frac1n\sum_{i,j=1}^m E_{ij}\otimes I_n\otimes E_{ji} + \gamma,
 \end{equation*}
 for all $A\in\mathbb{F}_m\otimes\mathbb{F}_n$ and $B\in\mathbb{F}_n$. Furthermore,
 \begin{equation*}
  \delta(A,B) = \frac1n(\Ptr(A) - \tr(B)I_m) + \tr_{12}(\gamma^T(A\otimes I_m-I_m\otimes B\otimes I_m)).
 \end{equation*}
\end{theorem}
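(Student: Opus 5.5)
This follows the proof of Theorem \ref{thm:canon2} verbatim, replacing the unit-trace matrix $E_{11}$ by $\frac1n I_n$; this replacement is legitimate exactly because $\chr(\mathbb{F})\nmid n$. That is, $n$ is invertible in $\mathbb{F}$ and $\tr(\frac1n I_n)=1$.

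First, by Proposition \ref{prp:canon} we obtain $\alpha$ with
\[
\delta(A,B)=\tr_{12}(\alpha^T(A\otimes I_m-I_m\otimes B\otimes I_m))
\quad\text{and}\quad
\tr_{12}(\alpha^T(X\otimes I_n\otimes I_m))=X .
\]
Next, write $\alpha=\sum E_{ij}\otimes\alpha_{ij;uv}\otimes E_{uv}$ with blocks $\alpha_{ij;uv}\in\mathbb{F}_n$. Testing the second condition on $X=E_{kl}$ gives, exactly as in the proof of Theorem \ref{thm:canon2}, that $\tr(\alpha_{kl;uv})=\delta_{vk}\delta_{ul}$. Now set $\gamma_{ij}:=\alpha_{ij;ji}-\frac1n I_n$, which has trace zero. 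Define $\gamma$ as the sum of the terms $E_{ij}\otimes\gamma_{ij}\otimes E_{ji}$ and the off-pattern terms $E_{ij}\otimes\alpha_{ij;uv}\otimes E_{uv}$ with $(i,j)\neq(v,u)$. Every middle block of $\gamma$ is then traceless, so $\tr_2(\gamma)=0$, and
\[
\alpha=\frac1n\sum_{i,j}E_{ij}\otimes I_n\otimes E_{ji}+\gamma .
\]

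For the ``furthermore'' formula, I will split $\alpha$ by linearity of $\tr_{12}$ and compute the contribution of $\alpha_0:=\frac1n\sum E_{ij}\otimes I_n\otimes E_{ji}$. Note that $\alpha_0^T=\frac1n\sum E_{ji}\otimes I_n\otimes E_{ij}$. For a product tensor $A=P\otimes Q$ we get
\[
\tr_{12}\bigl(\alpha_0^T(P\otimes Q\otimes I_m)\bigr)=\frac1n\sum_{i,j}\tr(E_{ji}P)\tr(Q)E_{ij}=\frac1n\tr(Q)P ,
\]
since $\sum_{i,j}(P)_{ij}E_{ij}=P$. By linearity this equals $\frac1n\Ptr(A)$, where $\Ptr$ is the partial trace over the second factor from Definition \ref{def:partialtrace}. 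Applying the same identity to $I_m\otimes B$ gives $\frac1n\tr(B)I_m$. Adding the $\gamma$ term then yields the displayed formula.

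The only real point of care is matching conventions: I must check that the paper's $\Ptr$ traces out the $\mathbb{F}_n$ factor, as the computation above requires, and that the transpose and index pattern $E_{ij}\otimes\cdot\otimes E_{ji}$ reproduces $P$ rather than $P^T$. This is the same bookkeeping already validated in Lemma \ref{lem:canon3} and Theorem \ref{thm:canon2}, so I expect no genuine obstacle. The hypothesis $\chr(\mathbb{F})\nmid n$ is used solely to make $\frac1n$ meaningful.
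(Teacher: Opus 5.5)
Your proposal is correct and takes essentially the same route as the paper: rerun the proof of Theorem~\ref{thm:canon2} with $\gamma_{ij}:=\alpha_{ij;ji}-\frac1n I_n$, then compute the contribution of $\alpha-\gamma$ directly. The only difference is cosmetic: you evaluate that contribution on product tensors $P\otimes Q$ and extend by linearity, whereas the paper writes $A=\sum_{k,l}E_{kl}\otimes A_{kl}$ and carries out the same computation blockwise.
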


\begin{proof}
 The proof is identical to the proof for Theorem \ref{thm:canon2}, by setting
 \begin{equation*}
  \gamma_{ij}:=\alpha_{ij;ji}-\dfrac1nI_n.
 \end{equation*}
 Let $A\in\mathbb{F}_m\otimes\mathbb{F}_n$ be written in the form
 \begin{equation*}
  A = \sum_{k,l=1}^m E_{kl}\otimes A_{kl}.
 \end{equation*}
 We have
 \begin{align*}
  \lefteqn{\tr_{12}((\alpha-\gamma)^T(A\otimes I_m-I_m\otimes B\otimes I_m))} \qquad & \\
   &= \frac1n\tr_{12}\left(\sum_{i,j=1}^m ((E_{ij}\otimes I_n)A)\otimes E_{ji} - E_{ij}\otimes B\otimes E_{ji}\right) \\
   &= \frac1n\sum_{i,j,k,l=1}^m \tr((E_{ij}E_{kl})\otimes A_{kl})\otimes E_{ji} - \delta_{ij}\tr(B)E_{ji} \\
   &= \frac1n\sum_{i,j,k,l=1}^m \delta_{jk}\delta_{il}\tr(A_{kl})E_{ji} - \frac1n\sum_{i=1}^m\tr(B)E_{ii} \\
   &= \frac1n\sum_{k,l=1}^m \tr(A_{kl})E_{kl} - \frac1n\tr(B)I_m \\
   &= \frac1n(\Ptr(A) - \tr(B)I_m). \qedhere
 \end{align*}
\end{proof}

\begin{theorem}
 \label{thm:transpose}%
 Let $\alpha\in\mathbb{F}_m\otimes\mathbb{F}_n\times\mathbb{F}_m$ and let
 $\delta:(\mathbb{F}_m\otimes\mathbb{F}_n)\times\mathbb{F}_n\to \mathbb{F}_m$
 be the linear map
 \begin{equation*}
  \delta(A,B) = \tr_{12}(\alpha^T(A\otimes I_m-I_m\otimes B\otimes I_m))
 \end{equation*}
 for all $A\in\mathbb{F}_m\otimes\mathbb{F}_n$ and $B\in\mathbb{F}_n$.
 Then
 \begin{equation*}
  \delta(A^T,B^T)=\delta(A,B)^T\quad\text{for all $A\in\mathbb{F}_m\otimes\mathbb{F}_n$ and $B\in \mathbb{F}_m$}
 \end{equation*}
 if and only if $T_3(\alpha)=T_3(\alpha^T)$.
 Equivalently, if $A\ominus B=\delta(A,B)$,
 \begin{equation*}
  (A\ominus B)^T = A^T\ominus B^T \quad\text{if and only if}\quad T_3(\alpha)=T_3(\alpha^T).
 \end{equation*}
\end{theorem}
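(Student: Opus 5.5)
The plan is to reduce to the case $B=0$, as in Remark~\ref{rem:B=0}, and then compare coefficients of $\alpha$ in the elementary basis. I read $T_3$ as the partial transpose on the third tensor factor; I am assuming this is the definition in the appendix. I also read the quantifier "$B\in\mathbb{F}_m$" in the statement as $B\in\mathbb{F}_n$.

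\emph{Reduction to $B=0$.} By Remark~\ref{rem:B=0}, $\delta(A,B)=\delta(A-I_m\otimes B,0)$. Since $(A-I_m\otimes B)^T=A^T-I_m\otimes B^T$, the condition $\delta(A^T,B^T)=\delta(A,B)^T$ for all $A,B$ is equivalent to
\begin{equation*}
 \delta(A^T,0)=\delta(A,0)^T \quad\text{for all } A\in\mathbb{F}_m\otimes\mathbb{F}_n.
\end{equation*}
One direction is the specialisation $B=0$. For the other, substitute $A-I_m\otimes B$ for $A$. This is exactly the argument of Lemma~\ref{lem:zerosimp}(a), which only uses linearity. Both sides of the reduced condition are linear in $A$, so it suffices to check it on the basis $A=E_{ij}\otimes E_{kl}$.

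\emph{Coordinates.} As in the proof of Proposition~\ref{prp:canon}, write
\begin{equation*}
 \alpha=\sum_{i,j,r,s=1}^m\sum_{k,l=1}^n \alpha_{ij;kl;rs}\, E_{ij}\otimes E_{kl}\otimes E_{sr}.
\end{equation*}
Then $\delta(E_{ij}\otimes E_{kl},0)=\tr_{12}(\alpha^T(E_{ij}\otimes E_{kl}\otimes I_m))=\sum_{r,s}\alpha_{ij;kl;rs}E_{rs}$; I would re-verify this short computation. The basis condition $\delta(E_{ji}\otimes E_{lk},0)=\delta(E_{ij}\otimes E_{kl},0)^T$ then becomes
\begin{equation*}
 \alpha_{ji;lk;rs}=\alpha_{ij;kl;sr}\quad\text{for all } i,j,r,s\in\{1,\ldots,m\},\ k,l\in\{1,\ldots,n\}.
\end{equation*}

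\emph{Comparing with $T_3$.} Next I compute both sides of $T_3(\alpha)=T_3(\alpha^T)$ in the same basis. Using $T_3(E_{ij}\otimes E_{kl}\otimes E_{sr})=E_{ij}\otimes E_{kl}\otimes E_{rs}$ and relabelling $r\leftrightarrow s$,
\begin{equation*}
 T_3(\alpha)=\sum \alpha_{ij;kl;sr}\,E_{ij}\otimes E_{kl}\otimes E_{sr}.
\end{equation*}
Since $\alpha^T=\sum\alpha_{ij;kl;rs}E_{ji}\otimes E_{lk}\otimes E_{rs}$, we get $T_3(\alpha^T)=\sum\alpha_{ij;kl;rs}E_{ji}\otimes E_{lk}\otimes E_{sr}$. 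Relabelling $i\leftrightarrow j$ and $k\leftrightarrow l$ gives
\begin{equation*}
 T_3(\alpha^T)=\sum \alpha_{ji;lk;rs}\,E_{ij}\otimes E_{kl}\otimes E_{sr}.
\end{equation*}
Equality of coefficients is precisely the condition $\alpha_{ji;lk;rs}=\alpha_{ij;kl;sr}$ from the previous step, which proves the equivalence. The second formulation, with $A\ominus B=\delta(A,B)$, is just a restatement.

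The main obstacle is index bookkeeping: $\alpha^T$ enters through $\tr_{12}$, the third factor is written as $E_{sr}$ rather than $E_{rs}$, and the relabellings must be done consistently. I would check the final criterion on the case $n=1$ of Remark~\ref{rmk:trivial}. There $\alpha=\sum_{i,j}E_{ij}\otimes E_{ji}$ and $T_3(\alpha)=\sum_{i,j} E_{ij}\otimes E_{ij}=T_3(\alpha^T)$, consistent with $(A-bI_m)^T=A^T-bI_m$.
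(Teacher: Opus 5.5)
Your argument is correct, and the index bookkeeping checks out. The identity $\tr(E_{ji}E_{pq})=\delta_{ip}\delta_{jq}$ gives $\delta(E_{ij}\otimes E_{kl},0)=\sum_{r,s}\alpha_{ij;kl;rs}E_{rs}$, and both the reduced transpose condition and $T_3(\alpha)=T_3(\alpha^T)$ become the same coefficient symmetry $\alpha_{ji;lk;rs}=\alpha_{ij;kl;sr}$.

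Your reduction to $B=0$ is the paper's (Lemma~\ref{lem:zerosimp}(a), via Remark~\ref{rem:B=0}), but after that the routes differ. The paper never chooses coordinates. It rewrites $\delta(A,0)^T$ as $\tr_{12}(T_3(\alpha^T)(A\otimes I_m))$ using Lemmas~\ref{lem:parttrans1} and~\ref{lem:parttrans3}. It rewrites $\delta(A^T,0)$ as $\tr_{12}(T_3(\alpha)(A\otimes I_m))$ using $A^T=T_3(T_{12}(A))$ together with Lemmas~\ref{lem:parttrans2} and~\ref{lem:parttrans3}. It then strips off the test matrix $A$ with the separation Lemma~\ref{lem:parttrequal}, which rests on the orthomodule structure in the appendix. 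That chain is basis-free and reuses machinery needed elsewhere in Section~\ref{sec:canon}. Its middle step, however, tacitly needs the cyclicity $\tr_{12}((A\otimes I_m)Y)=\tr_{12}(Y(A\otimes I_m))$, which is not among the stated lemmas.

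Your coefficient comparison is entirely self-contained: the separation step collapses to uniqueness of coordinates in the basis $\{E_{ij}\otimes E_{kl}\otimes E_{sr}\}$. You also get an explicit entrywise criterion on $\alpha$. Because it is linear in $\alpha$, it shows at once that the summand $\sum_{i,j}E_{ij}\otimes E_{11}\otimes E_{ji}$ in Theorem~\ref{thm:transpose2} satisfies it, so only $\gamma$ matters there. The only cost is the index bookkeeping, which you handled consistently; reading $B\in\mathbb{F}_m$ as $B\in\mathbb{F}_n$ is also correct.
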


\begin{proof}
 By Lemma \ref{lem:zerosimp}, we may assume that $B=0_n$. Now,
 using the identity for the transpose: $A^T=T_3(T_{12}(A))=T_{12}(T_3(A))$, we have
% \begin{align*}
% %\SwapAboveDisplaySkip
%  &\delta(A^T,0_n)=\delta(A,0_n)^T
% \intertext{if and only if}
%  &\tr_{12}(\alpha^T(A^T\otimes I_m))
%   = \tr_{12}(\alpha^T(A\otimes I_m))^T
% \intertext{if and only if}
%  &\tr_{12}(\alpha^T(A^T\otimes I_m))
%   = \tr_{12}(T_3(\alpha^T(A\otimes I_m)))
%   & \text{(by Lemma \ref{lem:parttrans1})} \\
% \intertext{if and only if}
%  &\tr_{12}(\alpha^T(A^T\otimes I_m))
%   = \tr_{12}(T_3(\alpha^T)(A\otimes I_m))
%   & \text{(by Lemma \ref{lem:parttrans3})} \\
% \intertext{if and only if
%  (by the identity $A^T=T_3(T_{12}(A))=T_{12}(T_3(A))$))}
%  &\tr_{12}(T_3(\alpha)(A\otimes I_m))
%   = \tr_{12}(T_3(\alpha^T)(A\otimes I_m))
%  & \text{(by Lemmata \ref{lem:parttrans2}, \ref{lem:parttrans3})}\\
% \intertext{if and only if}
%  &T_3(\alpha) = T_3(\alpha^T).
%   & \text{(by Lemma \ref{lem:parttrequal})}
%   \rlap{\,\,\,\,\quad\qquad\qedhere}
% \end{align*}
  \begin{alignat*}{4}
    \lefteqn{\delta(A^T,0_n)=\delta(A,0_n)^T}\qquad &&& & \\
    &\iff&\quad
    \tr_{12}(\alpha^T(A^T\otimes I_m)) 
    &= \tr_{12}(\alpha^T(A\otimes I_m))^T & \\
    &\iff&\tr_{12}(\alpha^T(A^T\otimes I_m))
    &= \tr_{12}(T_3(\alpha^T(A\otimes I_m))) 
    &&\quad \text{[by Lem. \ref{lem:parttrans1}]} \\
    &\iff&
    \tr_{12}(\alpha^T(A^T\otimes I_m))
    &= \tr_{12}(T_3(\alpha^T)(A\otimes I_m))
    && \quad\text{[by Lem. \ref{lem:parttrans3}]} \\
 %\intertext{if and only if
   %& & & \text{(by the identity $A^T=T_3(T_{12}(A))=T_{12}(T_3(A))$))} \\
    &\iff&
    \tr_{12}(T_3(\alpha)(A\otimes I_m))
    &= \tr_{12}(T_3(\alpha^T)(A\otimes I_m))
    && \quad\text{[by Lems \ref{lem:parttrans2}, \ref{lem:parttrans3}]} \\
    &\iff&
    T_3(\alpha) &= T_3(\alpha^T).
    && \quad\text{[by Lem. \ref{lem:parttrequal}]} &&\quad \qedhere
  \end{alignat*}
\end{proof}

\begin{theorem}
 \label{thm:transpose2}%
 Let $\mathbb{F}$ denote a field and
 let $\gamma\in\mathbb{F}_m\otimes\mathbb{F}_n\otimes\mathbb{F}_m$ be such that $\tr_2(\gamma)=0$.
 Let $\delta:(\mathbb{F}_m\otimes\mathbb{F}_n)\times\mathbb{F}_n\to \mathbb{F}_m$
 be the linear map
 \begin{equation*}
  \delta(A,B) = \tr_{12}(\alpha^T(A\otimes I_m-I_m\otimes B\otimes I_m)), \\\qquad
  \alpha:= \sum_{i,j=1}^m E_{ij}\otimes E_{11}\otimes E_{ji} + \gamma,
 \end{equation*}
 for all $A\in\mathbb{F}_m\otimes\mathbb{F}_n$ and $B\in\mathbb{F}_n$.
 Then
 \begin{equation*}
  \delta(A^T,B^T)=\delta(A,B)^T\quad\text{for all $A\in\mathbb{F}_m\otimes\mathbb{F}_n$ and $B\in \mathbb{F}_m$}
 \end{equation*}
 if and only if $T_3(\gamma)=T_3(\gamma^T)$.
 Equivalently, if $A\ominus B=\delta(A,B)$,
 \begin{equation*}
  (A\ominus B)^T = A^T\ominus B^T \quad\text{if and only if}\quad T_3(\gamma)=T_3(\gamma^T).
 \end{equation*}
\end{theorem}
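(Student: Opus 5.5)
Theorem \ref{thm:transpose} applies directly to the given $\delta$. It shows that $\delta(A^T,B^T)=\delta(A,B)^T$ for all $A,B$ if and only if $T_3(\alpha)=T_3(\alpha^T)$, where $\alpha=\alpha_0+\gamma$ and
\begin{equation*}
 \alpha_0:=\sum_{i,j=1}^m E_{ij}\otimes E_{11}\otimes E_{ji}.
\end{equation*}
So the whole task reduces to showing that $\alpha_0$ satisfies $T_3(\alpha_0)=T_3(\alpha_0^T)$. Then, since the transpose and the partial transpose $T_3$ are linear,
\begin{equation*}
 T_3(\alpha)-T_3(\alpha^T)=\bigl(T_3(\alpha_0)-T_3(\alpha_0^T)\bigr)+\bigl(T_3(\gamma)-T_3(\gamma^T)\bigr)=T_3(\gamma)-T_3(\gamma^T),
\end{equation*}
and the equivalence follows. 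The hypothesis $\tr_2(\gamma)=0$ is not needed for this equivalence. It only guarantees, via Theorem \ref{thm:canon2}, that $\delta$ is a Kronecker difference, which is what the ``equivalently'' phrasing uses.

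The key step is to compute both sides for $\alpha_0$, taking $T_3$ to be the partial transpose on the third tensor factor, as in the appendix definitions used in Theorem \ref{thm:transpose}. First,
\begin{equation*}
 \alpha_0^T=\sum_{i,j}E_{ji}\otimes E_{11}\otimes E_{ij}=\alpha_0,
\end{equation*}
after relabelling $i\leftrightarrow j$, because $E_{11}^T=E_{11}$. Hence $T_3(\alpha_0^T)=T_3(\alpha_0)$ holds trivially. I expect no real obstacle here.

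The only point to check is notational. The identity $A^T=T_3(T_{12}(A))$ used in the proof of Theorem \ref{thm:transpose} confirms that $T_3$ acts on the last factor only and $T_{12}$ on the first two. Even without this, $\alpha_0^T=\alpha_0$ makes the claim hold for any linear interpretation of $T_3$.

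For the ``Equivalently'' clause, I would note that $\delta$ satisfies $\delta(A\oplus B,B)=A$. Indeed, the form matches Lemma \ref{lem:canon3} with $\gamma_1=E_{11}$ (which has $\tr(E_{11})=1$) and $\gamma_0=\gamma$ (which has $\tr_2(\gamma)=0$). So setting $A\ominus B:=\delta(A,B)$ defines a Kronecker difference, and the two formulations coincide.
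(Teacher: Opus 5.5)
Your proposal is correct and follows the paper's proof: apply Theorem~\ref{thm:transpose}, then observe that the $\sum_{i,j}E_{ij}\otimes E_{11}\otimes E_{ji}$ part contributes identical terms to $T_3(\alpha)$ and $T_3(\alpha^T)$, so only the $\gamma$ terms remain. You phrase this as $\alpha_0^T=\alpha_0$, whereas the paper writes out both sums and matches them by relabelling; one small slip is attributing the Kronecker-difference property to Theorem~\ref{thm:canon2}, which is the converse direction, and Lemma~\ref{lem:canon3}, which you cite at the end, is the right reference.
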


\begin{proof}
 By Theorem \ref{thm:transpose}, $\delta(A^T,B^T) = \delta(A,B)^T$
 if and only if $T_3(\alpha)=T_3(\alpha^T)$, i.e.
 \begin{equation*}
  \sum_{i,j=1}^m E_{ij}\otimes E_{11}\otimes E_{ij} + T_3(\gamma)
  =
  \sum_{i,j=1}^m E_{ji}\otimes E_{11}\otimes E_{ji} + T_3(\gamma^T),
 \end{equation*}
 so the result follows.
\end{proof}

\begin{theorem}
 Let $\mathbb{F}$ denote a field with $\chr(\mathbb{F})\nmid n$ and
 let $\gamma\in\mathbb{F}_m\otimes\mathbb{F}_n\otimes\mathbb{F}_m$ be such that $\tr_2(\gamma)=0$.
 Let $\delta:(\mathbb{F}_m\otimes\mathbb{F}_n)\times\mathbb{F}_n\to \mathbb{F}_m$
 be the linear map
 \begin{equation*}
  \delta(A,B) = \tr_{12}(\alpha^T(A\otimes I_m-I_m\otimes B\otimes I_m)),\qquad
  \alpha:= \dfrac1n\sum_{i,j=1}^m E_{ij}\otimes I_n\otimes E_{ji} + \gamma,
 \end{equation*}
 for all $A\in\mathbb{F}_m\otimes\mathbb{F}_n$ and $B\in\mathbb{F}_n$.
 Then
 \begin{equation*}
  \delta(A^T,B^T)=\delta(A,B)^T\quad\text{for all $A\in\mathbb{F}_m\otimes\mathbb{F}_n$ and $B\in \mathbb{F}_m$}
 \end{equation*}
 if and only if $T_3(\gamma)=T_3(\gamma^T)$.
 Equivalently, if $A\ominus B=\delta(A,B)$,
 \begin{equation*}
  (A\ominus B)^T = A^T\ominus B^T \quad\text{if and only if}\quad T_3(\gamma)=T_3(\gamma^T).
 \end{equation*}
\end{theorem}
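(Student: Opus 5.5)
The argument follows the proof of Theorem \ref{thm:transpose2}, with the fixed part of $\alpha$ changed. By Theorem \ref{thm:transpose}, the identity $\delta(A^T,B^T)=\delta(A,B)^T$ holds for all $A,B$ if and only if $T_3(\alpha)=T_3(\alpha^T)$. It therefore suffices to compute $T_3$ and the transpose of the fixed part
\begin{equation*}
 \alpha_0:=\dfrac1n\sum_{i,j=1}^m E_{ij}\otimes I_n\otimes E_{ji},
\end{equation*}
and to check that $T_3(\alpha_0)=T_3(\alpha_0^T)$. Since $T_3$ and the transpose are linear, the condition $T_3(\alpha)=T_3(\alpha^T)$ then reduces to $T_3(\gamma)=T_3(\gamma^T)$.

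First, $T_3$ transposes only the third tensor factor. Hence
\begin{equation*}
 T_3(\alpha_0)=\dfrac1n\sum_{i,j=1}^m E_{ij}\otimes I_n\otimes E_{ij}.
\end{equation*}
Second, the full transpose acts factorwise on elementary tensors, since $(X\otimes Y\otimes Z)^T=X^T\otimes Y^T\otimes Z^T$. Using $I_n^T=I_n$, this gives
\begin{equation*}
 \alpha_0^T=\dfrac1n\sum_{i,j=1}^m E_{ji}\otimes I_n\otimes E_{ij},
\end{equation*}
and therefore
\begin{equation*}
 T_3(\alpha_0^T)=\dfrac1n\sum_{i,j=1}^m E_{ji}\otimes I_n\otimes E_{ji}.
\end{equation*}
Relabelling $i\leftrightarrow j$ shows that this equals $T_3(\alpha_0)$.

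The condition $T_3(\alpha)=T_3(\alpha^T)$ reads $T_3(\alpha_0)+T_3(\gamma)=T_3(\alpha_0^T)+T_3(\gamma^T)$. Cancelling the equal $\alpha_0$ terms on both sides gives the equivalence with $T_3(\gamma)=T_3(\gamma^T)$. The hypothesis $\chr(\mathbb{F})\nmid n$ is needed only so that $\frac1n$ is defined; the constraint $\tr_2(\gamma)=0$ plays no role in this equivalence.

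The only point that needs care is the bookkeeping of the transposes, namely that $T_3$ combined with the full transpose behaves factorwise on the block decomposition used in Theorem \ref{thm:transpose}. Relying on the same convention as the proof of Theorem \ref{thm:transpose2}, which uses the identity $A^T=T_3(T_{12}(A))$, this is immediate. The "equivalently" clause follows by reading $A\ominus B=\delta(A,B)$.
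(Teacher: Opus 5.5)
Your proposal is correct and follows the paper's own route: the paper's proof likewise applies Theorem \ref{thm:transpose} to reduce the claim to $T_3(\alpha)=T_3(\alpha^T)$. It then observes, exactly as in Theorem \ref{thm:transpose2}, that the fixed part $\frac1n\sum_{i,j}E_{ij}\otimes I_n\otimes E_{ji}$ contributes the same term to both sides, leaving $T_3(\gamma)=T_3(\gamma^T)$.
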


\begin{proof}
 Similar to the proof of Theorem \ref{thm:transpose2}.
% By Theorem \ref{thm:transpose}, $\delta(A^T,B^T) = \delta(A,B)^T$
% if and only if $T_3(\alpha)=T_3(\alpha^T)$, i.e.
% \begin{equation*}
%  \dfrac1n\sum_{i,j=1}^m E_{ij}\otimes I_n\otimes E_{ij} + T_3(\gamma)
%  =
%  \dfrac1n\sum_{i,j=1}^m E_{ji}\otimes I_n\otimes E_{ji} + T_3(\gamma^T),
% \end{equation*}
% so the result follows.
\end{proof}

\begin{theorem}
 Let $\mathbb{F}$ denote a field with $\chr(\mathbb{F})\nmid n$ and
 let $\gamma\in\mathbb{F}_m\otimes\mathbb{F}_n\otimes\mathbb{F}_m$ be such that $\tr_2(\gamma)=0$.
 Let $\delta:(\mathbb{F}_m\otimes\mathbb{F}_n)\times\mathbb{F}_n\to \mathbb{F}_m$
 be the linear map
 \begin{equation*}
  \delta(A,B) = \tr_{12}(\alpha^T(A\otimes I_m-I_m\otimes B\otimes I_m)),\qquad
  \alpha:= \dfrac1n\sum_{i,j=1}^m E_{ij}\otimes I_n\otimes E_{ji} + \gamma,
 \end{equation*}
 for all $A\in\mathbb{F}_m\otimes\mathbb{F}_n$ and $B\in\mathbb{F}_n$.
 Then
 \begin{equation*}
  \tr(\delta(A,B)) = \frac1n(\tr(A) - m\tr(B))\quad\text{for all $A\in\mathbb{F}_m\otimes\mathbb{F}_n$ and $B\in \mathbb{F}_m$}
 \end{equation*}
 if and only if $\tr_3(\gamma)=0$.
 Equivalently, if $A\ominus B=\delta(A,B)$,
 \begin{equation*}
  \tr(A\ominus B) = \dfrac1n(\tr(A) - m\tr(B)) \quad\text{if and only if}\quad \tr_3(\gamma)=0.
 \end{equation*}
\end{theorem}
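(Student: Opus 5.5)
The plan is to use Theorem \ref{thm:canon}, which splits $\delta$ into an explicit trace part and a $\gamma$-part. Since $\delta$ is linear and depends on $(A,B)$ only through $A-I_m\otimes B$ (Remark \ref{rem:B=0}), Lemma \ref{lem:zerosimp}(b) lets us take $B=0_n$. The claim then becomes: $\tr(\delta(A,0_n))=\frac1n\tr(A)$ for all $A\in\mathbb{F}_m\otimes\mathbb{F}_n$ if and only if $\tr_3(\gamma)=0$.

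First I compute $\tr(\delta(A,0_n))$. The argument of Theorem \ref{thm:canon} applies verbatim to any $\alpha$ of the stated form, so
\begin{equation*}
\delta(A,0_n)=\tfrac1n\Ptr(A)+\tr_{12}(\gamma^T(A\otimes I_m)).
\end{equation*}
Writing $A=\sum_{k,l}E_{kl}\otimes A_{kl}$, we get $\Ptr(A)=\sum_{k,l}\tr(A_{kl})E_{kl}$. Hence $\tr(\Ptr(A))=\sum_k\tr(A_{kk})=\tr(A)$, and the first term contributes exactly $\frac1n\tr(A)$. The condition therefore reduces to
\begin{equation*}
\tr\bigl(\tr_{12}(\gamma^T(A\otimes I_m))\bigr)=0\quad\text{for all }A.
\end{equation*}

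Next I rewrite the left-hand side. Taking the trace over the third factor after $\tr_{12}$ gives the full trace, so the left-hand side equals $\tr(\gamma^T(A\otimes I_m))$. The factor $A\otimes I_m$ acts as the identity on the third tensor slot, so this full trace equals $\tr\bigl(\tr_3(\gamma^T)A\bigr)=\tr\bigl(\tr_3(\gamma)^TA\bigr)$. To justify this I would expand $\gamma=\sum E_{ij}\otimes\gamma_{ij;uv}\otimes E_{uv}$ as in the proof of Theorem \ref{thm:canon2}, use $\tr(E_{vu})=\delta_{uv}$, and use that partial trace commutes with transpose (Lemma \ref{lem:parttrans1}-type facts in the appendix). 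This gives $\tr(\gamma^T(A\otimes I_m))=\tr(\tr_3(\gamma)^TA)$ with $\tr_3(\gamma)\in\mathbb{F}_m\otimes\mathbb{F}_n$.

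Finally I use nondegeneracy. The pairing $(X,A)\mapsto\tr(X^TA)$ on $\mathbb{F}_{mn}$ is nondegenerate over any field: taking $A=E_{pq}$ picks out the $(p,q)$ entry of $X$. So $\tr(\tr_3(\gamma)^TA)=0$ for all $A$ holds if and only if $\tr_3(\gamma)=0$. Both directions follow at once, and Lemma \ref{lem:zerosimp}(b) lifts the $B=0_n$ statement back to general $B$. The only step needing care is the index bookkeeping identifying $\tr(\gamma^T(A\otimes I_m))$ with $\tr(\tr_3(\gamma)^TA)$, in particular checking that the transposes are placed consistently. Note also that the hypothesis $\tr_2(\gamma)=0$ is not needed for this equivalence; it only ensures that $\delta$ is a Kronecker difference.
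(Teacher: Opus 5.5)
Your proposal is correct and follows essentially the paper's argument: reduce to $B=0_n$ via Lemma~\ref{lem:zerosimp}(b), observe that the $\frac1n\sum_{i,j}E_{ij}\otimes I_n\otimes E_{ji}$ part contributes exactly $\frac1n\tr(A)$, identify $\tr(\gamma^T(A\otimes I_m))$ with $\tr(\tr_3(\gamma)^TA)$, and conclude by nondegeneracy of the trace pairing. The only differences are cosmetic. You pass through the $\Ptr$ formula of Theorem~\ref{thm:canon} before taking the trace, whereas the paper takes the full trace $\tr(\alpha^T(A\otimes I_m))$ directly. Your transpose on $\tr_3(\gamma)$ is placed correctly; the paper's $\tr(\tr_3(\gamma)A)$ drops it, which is harmless for the conclusion.
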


\begin{proof}
 Direct calculation, by Lemma \ref{lem:zerosimp}, yields
 \begin{align*}
  \tr(\delta(A,0))
   &= \tr(\tr_{12}(\alpha^T(A\otimes I_m))) \\
   &= \tr(\alpha^T(A\otimes I_m)) \vphantom{\smash[b]{\sum_{i,j=1}^m}}\\
   &= \dfrac1n\tr_{12}\left(\tr_3\left(\sum_{i,j=1}^m [(E_{ij}\otimes I_n)A] \otimes E_{ji}\right)\right)
    + \tr(\gamma^T(A\otimes I_m)) \\
   &= \dfrac1n\tr\left(\sum_{i,j=1}^m \delta_{ij}[(E_{ij}\otimes I_n)A]\right)
    + \tr(\gamma^T(A\otimes I_m)) \\
   &= \dfrac1n\tr\left(\sum_{i=1}^m [(E_{ii}\otimes I_n)A]\right)
    + \tr(\gamma^T(A\otimes I_m)) \\
   &= \dfrac1n\tr\left(A\right) + \tr(\gamma^T(A\otimes I_m)). \vphantom{\left(\sum_{i,j=1}^m\right)}
 \end{align*}
 Hence, $\tr(\delta(A,0)) = \frac1n(\tr(A))$ if and only if
 \begin{equation*}
  \tr(\gamma^T(A\otimes I_m)) = 0.
 \end{equation*}
 Since the equation must hold for arbitrary $A$, we must have
 \begin{equation*}
  \tr(\gamma^T(A\otimes I_m)) = 0\quad\text{for all $A\in\mathbb{F}_m\otimes\mathbb{F}_n$}.
 \end{equation*}
 Writing $\gamma$ in the form
 \begin{equation*}
  \gamma = \sum_{i,j=1}^m \gamma_{ij}\otimes E_{ij},
 \end{equation*}
 and observing that
 \begin{equation*}
  \tr_3(\gamma) = \sum_{i,j=1}^m \tr(E_{ij})\gamma_{ij} = \sum_{i=1}^m \gamma_{ii}, \qquad
  \tr(\gamma^T(A\otimes I_m)) = \tr(\tr_3(\gamma^T(A\otimes I_m))),
 \end{equation*}
 shows that
 \begin{equation*}
  \tr(\gamma^T(A\otimes I_m)) = 0\quad\text{for all $A\in\mathbb{F}_m\otimes\mathbb{F}_n$}.
 \end{equation*}
 if and only if
 \begin{equation*}
  \tr\left(\left(\sum_{i=1}^m \gamma_{ii}^T\right)A\right)
   = \tr\left(\tr_3(\gamma)A\right)
   = 0\quad\text{for all $A\in\mathbb{F}_m\otimes\mathbb{F}_n$}.
 \end{equation*}
 Hence, $\tr(\delta(A,B)) = \frac1n(\tr(A) - m\tr(B))$ if and only if $\tr_3(\gamma)=0$.
\end{proof}

\begin{proposition}
 \label{prop:eqvrepr}%
 Let $\mathbb{F}$ denote a field
 and $\delta_1,\delta_2:(\mathbb{F}_m\otimes\mathbb{F}_n)\times\mathbb{F}_n\to \mathbb{F}_m$
 be two linear maps satisfying $\delta_i(A\oplus B,B)=A$ for all $i\in\{1,2\}$, $A\in \mathbb{F}_m$
 and $B\in\mathbb{F}_n$. Suppose that there exists
 $\beta_1,\beta_2\in\mathbb{F}_n$ such that $\tr(\beta_1)=\tr(\beta_2)=1$ and
 $\gamma_1,\gamma_2\in\mathbb{F}_m\otimes\mathbb{F}_n\otimes\mathbb{F}_m$ such that
  $\tr_1(\gamma_1)=\tr_1(\gamma_2)=0$ and
 \begin{align*}
  \delta_1(C,D) &= \tr_{12}(\alpha_1^T(C\otimes I_m-I_m\otimes D\otimes I_m)),\qquad
  \alpha_1:= \sum_{i,j=1}^m E_{ij}\otimes \beta_1\otimes E_{ji} + \gamma_1, \\
  \delta_2(C,D) &= \tr_{12}(\alpha_2^T(C\otimes I_m-I_m\otimes D\otimes I_m)),\qquad
  \alpha_2:= \sum_{i,j=1}^m E_{ij}\otimes \beta_2\otimes E_{ji} + \gamma_2,
 \end{align*}
 for all $C\in\mathbb{F}_m\otimes\mathbb{F}_n$ and $D\in\mathbb{F}_n$.
 Then $\delta_1=\delta_2$ if and only if $(\beta_1,\gamma_1)=(\beta_2,\gamma_2).$
\end{proposition}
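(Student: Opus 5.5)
The plan is to prove both directions, with all the work in the forward one. The reverse implication is immediate: if $(\beta_1,\gamma_1)=(\beta_2,\gamma_2)$, then $\alpha_1=\alpha_2$ by definition, so the two formulas for $\delta_1$ and $\delta_2$ coincide. For the forward implication I would split the argument into two steps. First show that $\delta_1=\delta_2$ forces $\alpha_1=\alpha_2$. Then show that the decomposition $\alpha=\sum_{i,j}E_{ij}\otimes\beta\otimes E_{ji}+\gamma$, with $\tr(\beta)=1$ and $\tr_1(\gamma)=0$, is unique.

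\textbf{Step 1: $\delta_1=\delta_2$ implies $\alpha_1=\alpha_2$.} Setting $D=0_n$ gives
\begin{equation*}
 \tr_{12}(\alpha_1^T(C\otimes I_m))=\tr_{12}(\alpha_2^T(C\otimes I_m))
\end{equation*}
for all $C\in\mathbb{F}_m\otimes\mathbb{F}_n$. Transposing the relevant factors via Lemmas \ref{lem:parttrans2} and \ref{lem:parttrans3}, as in the proof of Theorem \ref{thm:transpose}, reduces this to the form required by Lemma \ref{lem:parttrequal}. That lemma then yields $\alpha_1^T=\alpha_2^T$, hence $\alpha_1=\alpha_2$.

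If that lemma is not directly in the needed form, I would argue by hand with the basis from the proof of Proposition \ref{prp:canon}. Plugging in $C=E_{ij}\otimes E_{kl}$ recovers the coefficient $\alpha_{ij;kl;rs}$ as the $(r,s)$ entry of $\delta(E_{ij}\otimes E_{kl},0)$. So $\alpha$ is read off from $\delta$ entrywise, and matching $\delta_1$ and $\delta_2$ on this basis forces the coefficients to agree.

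\textbf{Step 2: uniqueness of $(\beta,\gamma)$ given $\alpha$.} I would apply $\tr_1$ to the common $\alpha=\alpha_1=\alpha_2$:
\begin{equation*}
 \tr_1(\alpha)=\sum_{i,j=1}^m\tr(E_{ij})\,\beta\otimes E_{ji}+\tr_1(\gamma)=\sum_{i=1}^m\beta\otimes E_{ii}=\beta\otimes I_m .
\end{equation*}
Hence $\beta_1\otimes I_m=\beta_2\otimes I_m$. Comparing the upper-left $n\times n$ blocks (or the coefficients of $E_{11}$ in the second factor) gives $\beta_1=\beta_2$. Then
\begin{equation*}
 \gamma_k=\alpha-\sum_{i,j}E_{ij}\otimes\beta_k\otimes E_{ji}
\end{equation*}
is the same for $k=1,2$, so $\gamma_1=\gamma_2$.

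The normalisation $\tr(\beta_k)=1$ is not actually needed for uniqueness; it only guarantees, via Lemma \ref{lem:canon3}, that such $\delta_k$ are Kronecker differences. The hypothesis $\tr_1(\gamma_k)=0$, by contrast, is exactly what makes the $\tr_1$ projection isolate $\beta$.

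The only step I expect to need care is Step 1: making sure the partial-trace and transpose conventions line up so that Lemma \ref{lem:parttrequal} applies to $\alpha^T$ against $C\otimes I_m$. The explicit basis computation is the fallback if they do not.
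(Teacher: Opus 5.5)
Your argument is correct, but it is organised differently from the paper's. The paper expands $\Lambda(C,D)$ for $\alpha_1-\alpha_2$ and uses both arguments of $\delta$. Since $\tr_1(\gamma_k)=0$, the $\gamma$-terms vanish against $I_m\otimes D\otimes I_m$. The $D$-dependent part therefore collapses to $-\tr((\beta_1-\beta_2)^TD)I_m$, which detects $\beta$. Once $\beta_1=\beta_2$, the $C$-dependent part reduces to $\tr_{12}((\gamma_1-\gamma_2)^T(C\otimes I_m))$, and Lemma~\ref{lem:parttrequal} detects $\gamma$. That route shows that $\beta$ is exactly the part of $\alpha$ governing the subtrahend, while $\gamma$ acts only on the minuend.

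You instead set $D=0$, recover all of $\alpha$ from $\delta(\cdot,0)$, and then split $\alpha$ algebraically via $\tr_1(\alpha)=\beta\otimes I_m$. This separates ``$\delta$ determines $\alpha$'' from ``$\alpha$ determines $(\beta,\gamma)$''. It also shows that the second argument of $\delta$ is redundant for uniqueness (in line with Remark~\ref{rem:B=0}), and that $\tr_1(\gamma_k)=0$ is the only hypothesis that matters.

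Two small corrections. First, no transposition detour is needed in Step~1: Lemma~\ref{lem:parttrequal} applies verbatim with $A=\alpha_1^T$ and $B=\alpha_2^T$. Second, the upper-left $n\times n$ block of $\beta\otimes I_m$ is not $\beta$ in general, since its $(a,b)$ block is $(\beta)_{a,b}I_m$. Use your parenthetical alternative instead, or simply note that $(\beta_1-\beta_2)\otimes I_m=0$ forces $\beta_1=\beta_2$.
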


\begin{proof}
 We have,
 \begin{equation*}
  \delta_1 = \delta_2
   \,\, \iff \,\,
    \forall C\in\mathbb{F}_m\otimes\mathbb{F}_n,
            D\in\mathbb{F}_m: \,
    \tr_{12}((\alpha_1-\alpha_2)^T(C\otimes I_m - I_m\otimes D\otimes I_m)) = 0.
 \end{equation*}
 We define,
 \begin{align*}
  \Lambda(C,D)
  & \coloneqq
    \tr_{12}((\alpha_1-\alpha_2)^T(C\otimes I_m - I_m\otimes D\otimes I_m)) \\
%  & =
%    \tr_{12}\left(\left(\sum_{i,j=1}^mE_{ij}\otimes (\beta_1-\beta_2)^T\otimes E_{ji}+(\gamma_1-\gamma_2)^T\right)(C\otimes I_m - I_m\otimes D\otimes I_m)\right).
% \end{align*}
% Hence,
% \begin{align*}
%  \Lambda
   & =
    \tr_{12}\left(\left(\sum_{i,j=1}^mE_{ji}\otimes (\beta_1-\beta_2)^T\otimes E_{ij}+(\gamma_1-\gamma_2)^T\right)(C\otimes I_m)\right) \\
   &\qquad
    -\tr_{12}\left(\left(\sum_{i,j=1}^m\left(E_{ji}\otimes
    (\beta_1-\beta_2)^T\otimes E_{ij}\right)(I_m\otimes D\otimes I_m)\right)\right),
  \intertext{
   where we used that $\tr_1(\gamma_1)=\tr_1(\gamma_2)=0$, and furthermore
  }
   \Lambda(C,D)
   & =
    \tr_{12}\left(\sum_{i,j=1}^m\left[\left(E_{ji}\otimes
    (\beta_1-\beta_2)^T\right)C\right]\otimes E_{ij} + (\gamma_1 -
    \gamma_2)^T(C\otimes I_m)\right) \\
   &\qquad
    -\tr_{12}\left(\sum_{i,j=1}^mE_{ji}\otimes\left[(\beta_1 -
    \beta_2)^TD\right]\otimes E_{ij}\right) \\
   & =
    \tr_{12}\left(\sum_{i,j=1}^m\left[\left(E_{ji}\otimes(\beta_1-\beta_2)^T\right)C\right]\otimes E_{ij}
    + (\gamma_1 - \gamma_2)^T(C\otimes I_m)\right) \\
   &\qquad
    - \tr\left((\beta_1 - \beta_2)^TD\right)I_m.
    \phantom{\left(\sum_{i,j=1}^m\right)}
 \end{align*}
 It follows that $\delta_1 = \delta_2$ if and only if
 for all $C\in\mathbb{F}_m\otimes\mathbb{F}_n,
          D\in\mathbb{F}_m$ we have $\Lambda(C,D)=0$.
 Noting that $\tr((\beta_1 - \beta_2)^TD)=0$ for all $D$ if and only if $(\beta_1 - \beta_2)^T=0$, and that $\tr_{12}((\gamma_1 - \gamma_2)^T(C\otimes I_m))=0$ for all $C$ if and only if
 $(\gamma_1 - \gamma_2)^T=0$ (by Lemma \ref{lem:parttrequal}),
 we have that $\delta_1=\delta_2$ if and only if $\beta_1=\beta_2$
 and $\gamma_1=\gamma_2$.
\end{proof}

\section{Uniform Kronecker differences}

In Remark \ref{rmk:trivial}, we noted that if $A$ and $B$ are matrices of the
same order $n\in\mathbb{N}$, then a linear Kronecker difference is given by $A\ominus B=\tr(\upsilon_n^T(A-B))$
for some matrix $\upsilon_n\in \mathbb{F}_n$ with $\tr(\upsilon_n)=1$. These matrices $\upsilon_n$
may be viewed as generators of uniform Kronecker differences.
In Definition \ref{def:kduniform}, it is given that a uniform Kronecker difference
is linear, and hence has a canonical form given in Theorem \ref{thm:canon2}. The
form given in Lemma \ref{lem:canon3} inspires the following lemma.

\begin{lemma}
 \label{lem:unicanon}%
 Let $\upsilon_n\in\mathbb{F}_n$, $n\in\mathbb{N}$ be a sequence of matrices with
 $\tr(\upsilon_n)=1$. Then the Kronecker difference defined below is uniform, where
 \begin{equation*}
  A\ominus B = \tr_{12}(\alpha_{m,n}^T(A\otimes I_m-I_m\otimes B\otimes I_m)),\qquad
  \alpha_{m,n}:= \sum_{i,j=1}^m E_{ij}\otimes\upsilon_n\otimes E_{ji},
 \end{equation*}
 for all $A\in\mathbb{F}_m\otimes\mathbb{F}_n$ and $B\in\mathbb{F}_n$, $m,n\in\mathbb{N}$.
\end{lemma}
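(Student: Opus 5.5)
First I will compute the operation explicitly. Write $A=\sum_{k,l=1}^m E_{kl}\otimes A_{kl}$ with $A_{kl}\in\mathbb{F}_n$. The calculation in the proof of Theorem \ref{thm:canon} goes through with $\frac1n I_n$ replaced by $\upsilon_n$. It gives
\begin{equation*}
 A\ominus B=\sum_{k,l=1}^m \tr\bigl(\upsilon_n^T(A_{kl}-\delta_{kl}B)\bigr)E_{kl},
\end{equation*}
that is, $A\ominus B=(\mathrm{id}_m\otimes\phi_n)(A-I_m\otimes B)$, where $\phi_n(Y):=\tr(\upsilon_n^TY)$ is a linear functional on $\mathbb{F}_n$. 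Since $\tr(\upsilon_n)=1$, we have $\phi_n(I_n)=1$. Lemma \ref{lem:canon3} (with $\gamma_1=\upsilon_n$, $\gamma_0=0$) then shows that $\ominus$ is a Kronecker difference. Directly, $(A\otimes I_n+I_m\otimes B)-I_m\otimes B=A\otimes I_n$, and $(\mathrm{id}\otimes\phi_n)(A\otimes I_n)=A$.

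Linearity, i.e.\ \eqref{eq:D3} and \eqref{eq:D4}, follows at once. The map $(A,B)\mapsto A-I_m\otimes B$ is linear, and so is $\mathrm{id}_m\otimes\phi_n$. Hence $(A+B)\ominus(C+D)=A\ominus C+B\ominus D$ and $(kA)\ominus(kC)=k(A\ominus C)$.

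The main step is the uniformity identity $(A\oplus C)\ominus B=A\oplus(C\ominus B)$, for $A\in\mathbb{F}_m$, $C\in\mathbb{F}_p\otimes\mathbb{F}_n$, $B\in\mathbb{F}_n$. Here the left-hand difference is taken with outer order $mp$, so it uses $\mathrm{id}_{mp}\otimes\phi_n$. I will use the identification $\mathrm{id}_{mp}\otimes\phi_n=\mathrm{id}_m\otimes(\mathrm{id}_p\otimes\phi_n)$, which requires $\mathbb{F}_{mp}=\mathbb{F}_m\otimes\mathbb{F}_p$ with the block ordering of \eqref{eq:kronent}. This identification is the point where the same $\upsilon_n$ being used for all $m$ matters. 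Expanding,
\begin{equation*}
 A\oplus C-I_{mp}\otimes B=A\otimes I_p\otimes I_n+I_m\otimes(C-I_p\otimes B).
\end{equation*}
Applying $\mathrm{id}_m\otimes\mathrm{id}_p\otimes\phi_n$ and using $\phi_n(I_n)=1$ gives $A\otimes I_p+I_m\otimes(\mathrm{id}_p\otimes\phi_n)(C-I_p\otimes B)$. This equals $A\otimes I_p+I_m\otimes(C\ominus B)=A\oplus(C\ominus B)$.

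I expect the only real obstacle to be the bookkeeping, namely justifying the explicit formula for $\tr_{12}(\alpha_{m,n}^T(\cdot\otimes I_m))$ from the definitions of $\tr_{12}$ in the appendix. I will check it on basis elements: for $E_{kl}\otimes Y$ with $Y\in\mathbb{F}_n$, the value is $\sum_{i,j}\tr(E_{ji}E_{kl})\tr(\upsilon_n^TY)E_{ij}=\tr(\upsilon_n^TY)E_{kl}$, as in the proof of Lemma \ref{lem:canon3}. Linearity then extends this to all of $\mathbb{F}_m\otimes\mathbb{F}_n$.
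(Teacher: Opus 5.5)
Your proof is correct, but it takes a different route from the paper's. You first derive the closed form $A\ominus B=(\mathrm{id}_m\otimes\phi_n)(A-I_m\otimes B)$ with $\phi_n(Y)=\tr(\upsilon_n^TY)$, so the operation acts only on the $\mathbb{F}_n$ factor through one fixed functional. Uniformity then reduces to the identity $\mathrm{id}_{mp}\otimes\phi_n=\mathrm{id}_m\otimes(\mathrm{id}_p\otimes\phi_n)$, which follows from associativity of $\otimes$, and \eqref{eq:D3}, \eqref{eq:D4} are immediate.

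The paper stays inside the representation $\tr_{12}(\alpha^T(\cdot\otimes I_{mp}))$. It splits $(A\oplus C)\ominus B$ into two terms. The $A\otimes I_p\otimes I_n$ term is evaluated directly to $A\otimes I_p$. For the $I_m\otimes(C-I_p\otimes B)$ term, it conjugates by the vec-permutation matrix $\sigma_{m,p}$ to move the spectator factor $I_m$ past the partial trace, and reaches $\sigma_{m,p}^T((C\ominus B)\otimes I_m)\sigma_{m,p}=I_m\otimes(C\ominus B)$. Linearity is quoted from Lemma \ref{lem:canon3}.

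Your version avoids the permutation matrices and the index bookkeeping. It also avoids the paper's slip of writing $\alpha_{m,n}$ where $\alpha_{mp,n}$ is meant, and it shows exactly where the $m$-independence of $\upsilon_n$ is used. In effect it is the argument of Proposition \ref{prop:ukqdefinesukd}, with $X\mapsto(\mathrm{id}_m\otimes\phi_n)(X)$ playing the role of $X\oslash I_n$.

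What the paper's formulation buys is that it works with a general $\alpha$. That is what Theorem \ref{thm:unicanon2} reuses ("following the proof of Lemma \ref{lem:unicanon}") to show $\tr_1(\gamma_{m,n})=0$. Your factorization through $\mathrm{id}_m\otimes\phi_n$ relies on $\gamma_{m,n}=0$, so it would not carry over to that setting.
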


\begin{proof}
 By Lemma \ref{lem:canon3}, the operation $\ominus$ is a linear Kronecker difference.
  Let $\sigma_{m,p}$ be the vec permutation matrix \cite{henderson81a} which obeys $\sigma_{m,p}\sigma_{m,p}^T=\sigma_{m,p}^T\sigma_{m,p}=I_{mp}$, and
  \begin{equation*}
   \sigma_{m,p}(A\otimes B)\sigma_{m,p}^T=B\otimes A
  \end{equation*}
  for all $A\in\mathbb{F}_m$ and $B\in\mathbb{F}_p$.
  We have for $A \in \mathbb{F}_m$, $B \in \mathbb{F}_n$, $C \in \mathbb{F}_p \otimes \mathbb{F}_n$, and $m,n \in \mathbb{N}$,
    \begin{align*}
        (A \oplus C) \ominus B &= \delta(A \oplus C, B) \\
        &= \tr_{12}(\alpha_{m,n}^T(A\otimes I_p \otimes I_n \otimes I_{mp} + I_m \otimes C \otimes I_{mp} - I_{mp} \otimes B \otimes I_{mp})) \\
        &= \tr_{12}(\alpha_{m,n}^T(A\otimes I_p \otimes I_n \otimes I_{mp})) \\
        &\qquad + \tr_{12}(\alpha_{m,n}^T(I_m \otimes C \otimes I_{mp} - I_{mp} \otimes B \otimes I_{mp})). 
    \end{align*}
    Then, expanding the two terms on the right hand side yields
    %\begin{align*}
    %    \tr_{12}(\alpha_{m,n}^T(A\otimes I_p \otimes I_n \otimes I_{mp})) 
    %    &= \sum_{i,j=1}^{mp}\tr_{12}((E_{ji}(A\otimes I_p)) \otimes v_n \otimes E_{ij} \\
    %    &= A \otimes I_p
    %\end{align*}
    \begin{equation*}
        \tr_{12}(\alpha_{m,n}^T(A\otimes I_p \otimes I_n \otimes I_{mp})) = \sum_{i,j=1}^{mp}\tr_{12}((E_{ji}(A\otimes I_p)) \otimes v_n \otimes E_{ij} 
        = A \otimes I_p,
    \end{equation*}
    and
    \begin{align*}
      \lefteqn{\tr_{12}(\alpha_{m,n}^T(I_m \otimes C \otimes I_{mp} - I_{mp} \otimes B \otimes I_{mp}))}\\
      &=
        \sum_{i,j=1}^m\sum_{u,v=1}^p
          \tr_{12}\left(
          (E_{ji}\otimes E_{vu}\otimes\upsilon_n^T\otimes E_{ij}\otimes E_{uv})
          (I_m\otimes[C- I_p\otimes B]\otimes I_m\otimes I_p))
          \right) \\
      &=
        \sum_{i,j=1}^m\sum_{u,v=1}^p
          \tr_{12}\left(
          (E_{vu}\otimes\upsilon_n^T\otimes \delta_{ij}E_{ij}\otimes E_{uv})
          ([C- I_p\otimes B]\otimes I_m\otimes I_p))
         \right),
    \end{align*}
    where we used that $\tr(E_{ji})\otimes E_{ij} = \delta_{ij}E_{ij}$. Noting that $\sigma_{m,p}^T\sigma_{m,p}=I_{mp}$,
    \begin{align*}
      \lefteqn{\tr_{12}(\alpha_{m,n}^T(I_m \otimes C \otimes I_{mp} - I_{mp} \otimes B \otimes I_{mp}))}\quad \\
      &=
        \sum_{u,v=1}^p
          \tr_{12}\left(
          (E_{vu}\otimes\upsilon_n^T\otimes I_m\otimes E_{uv})
          ([C- I_p\otimes B]\otimes I_p\otimes I_m))
          \right) \\
      &=
        \sum_{u,v=1}^p
          \tr_{12}\big(
          (I_{p}\otimes I_n\otimes \sigma_{m,p}^T)
          (I_{p}\otimes I_n\otimes \sigma_{m,p})
          (E_{vu}\otimes\upsilon_n^T\otimes I_m\otimes E_{uv}) \\
      & \qquad\times
          ([C- I_p\otimes B]\otimes I_m\otimes I_p))
          (I_{p}\otimes I_n\otimes \sigma_{m,p}^T)
          (I_{p}\otimes I_n\otimes \sigma_{m,p})
          \big) \\
      &=
        \sum_{u,v=1}^p
          \tr_{12}\big(
          (I_{p}\otimes I_n\otimes \sigma_{m,p}^T)
          (E_{vu}\otimes\upsilon_n^T\otimes E_{uv}\otimes I_m) \\
      &\qquad\times
          ([C\otimes I_p- I_p\otimes B\otimes I_p]\otimes I_m))
          (I_{p}\otimes I_n\otimes \sigma_{m,p})
          \big),
    \end{align*}
    since $\sigma_{m,p}(A\otimes B)\sigma_{m,p}^T=B\otimes A$, so that
    \begin{align*}
      \lefteqn{\tr_{12}(\alpha_{m,n}^T(I_m \otimes C \otimes I_{mp} - I_{mp} \otimes B \otimes I_{mp}))}\quad \\
      &=
        \sum_{u,v=1}^p
          \tr_{12}\big(
          (I_{p}\otimes I_n\otimes \sigma_{m,p}^T) \\
      &\qquad\times
          (([E_{vu}\otimes\upsilon_n^T\otimes E_{uv}][C\otimes I_p- I_p\otimes B\otimes I_p])\otimes I_m))
          (I_{p}\otimes I_n\otimes \sigma_{m,p})
          \big)
    \end{align*}
    Hence,
    \begin{align*}
      \lefteqn{\tr_{12}(\alpha_{m,n}^T(I_m \otimes C \otimes I_{mp} - I_{mp} \otimes B \otimes I_{mp}))}\quad \\
      &=
        \sum_{u,v=1}^p
          \sigma_{m,p}^T %\\
      %&\qquad\times
          (\tr_{12}([E_{vu}\otimes\upsilon_n^T\otimes E_{uv}][C\otimes I_p- I_p\otimes B\otimes I_p])\otimes I_m))
          \sigma_{m,p} \\
      &= \sigma_{m,p}^T 
          ((C\ominus B)
          \otimes I_m)
          \sigma_{m,p}.
    \end{align*}
    Using this information, it then follows that
    \begin{align*}
      (A \oplus C) \ominus B &= \delta(A \oplus C, B) \\
      &= \tr_{12}(\alpha_{m,n}^T(A\otimes I_p \otimes I_n \otimes I_{mp}))  \\
      &\qquad + \tr_{12}(\alpha_{m,n}^T(I_m \otimes C \otimes I_{mp} - I_{mp} \otimes B \otimes I_{mp})) \\
      &= A\otimes I_p
       +  \sigma_{m,p}^T 
          ((C\ominus B)
          \otimes I_m)
          \sigma_{m,p} \\
      &= A\otimes I_p+I_m\otimes (C\ominus B) \\
      &= A\oplus (C\ominus B). \qedhere       
    \end{align*}
\end{proof}

\begin{theorem}
 \label{thm:unicanon2}%
 Let $\ominus$ be a uniform Kronecker difference. Then there exists a sequence of matrices
 $\upsilon_n\in\mathbb{F}_n$, $n\in\mathbb{N}$, with $\tr(\upsilon_n)=1$ and matrices
 $\gamma_{m,n}\in\mathbb{F}_{m^2n}$, $m,n\in\mathbb{N}$, with $\tr_{1}(\gamma_{m,n})=0$ and $\tr_{2}(\gamma_{m,n})=0$ and
 \begin{equation*}
  A\ominus B = \tr_{12}(\alpha_{m,n}^T(A\otimes I_m-I_m\otimes B\otimes I_m)),\qquad
  \alpha_{m,n}:= \sum_{i,j=1}^m E_{ij}\otimes\upsilon_n\otimes E_{ji} + \gamma_{m,n},
 \end{equation*}
 for all $A\in\mathbb{F}_m\otimes\mathbb{F}_n$ and $B\in\mathbb{F}_n$, $m,n\in\mathbb{N}$.
\end{theorem}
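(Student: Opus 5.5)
Since a uniform Kronecker difference satisfies \eqref{eq:D3} and \eqref{eq:D4}, for each pair $(m,n)$ the map $\delta_{m,n}(A,B)=A\ominus B$ is linear on $(\mathbb{F}_m\otimes\mathbb{F}_n)\times\mathbb{F}_n$ and satisfies $\delta_{m,n}(A\oplus B,B)=A$. By Proposition \ref{prp:canon} (and the computation in Theorem \ref{thm:canon2}) there is therefore some $\alpha_{m,n}\in\mathbb{F}_m\otimes\mathbb{F}_n\otimes\mathbb{F}_m$ with $A\ominus B=\tr_{12}(\alpha_{m,n}^T(A\otimes I_m-I_m\otimes B\otimes I_m))$. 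Write $\alpha_{m,n}=\sum_{i,j,u,v}E_{ij}\otimes\alpha^{(m,n)}_{ij;uv}\otimes E_{uv}$ with blocks in $\mathbb{F}_n$. By the proof of Theorem \ref{thm:canon2}, $\tr(\alpha^{(m,n)}_{ij;uv})=\delta_{vi}\delta_{uj}$. The goal is to show the "diagonal part" can be taken as $\sum E_{ij}\otimes\upsilon_n\otimes E_{ji}$, with the same $\upsilon_n$ for every $m$, and with a remainder $\gamma_{m,n}$ killed by both $\tr_1$ and $\tr_2$.

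\textbf{Choosing $\upsilon_n$.} Use the case $m=1$. There $\delta_{1,n}(A,B)=\tr(\alpha_{1,n}^T(A-B))$ with $\tr(\alpha_{1,n})=1$ (Remark \ref{rmk:trivial}). Set $\upsilon_n:=\alpha_{1,n}$, so $C\ominus B=\tr(\upsilon_n^T(C-B))$ for $C,B\in\mathbb{F}_n$. Apply the uniformity axiom with $p=1$: for $X\in\mathbb{F}_m$ and $C,B\in\mathbb{F}_n$,
\begin{equation*}
(X\oplus C)\ominus B = X+\tr(\upsilon_n^T(C-B))I_m .
\end{equation*}
Also use the $\mathbb{F}_n$-linearity of $\delta_{m,n}$ together with $\delta_{m,n}(A,B)=\delta_{m,n}(A-I_m\otimes B,0)$ (Remark \ref{rem:B=0}). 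Then $\delta_{m,n}(I_m\otimes D,0)=\tr(\upsilon_n^TD)I_m$ for all $D\in\mathbb{F}_n$. Expanding the left side with the block form of $\alpha_{m,n}$ gives $\sum_{u,v}\bigl(\sum_i\tr(\alpha^{(m,n)T}_{ii;uv}D)\bigr)E_{vu}$. Hence $\sum_i\alpha^{(m,n)}_{ii;uv}=\delta_{uv}\upsilon_n$, that is, $\tr_1(\alpha_{m,n})=I_m$-type data built from $\upsilon_n$: precisely $\tr_1(\alpha_{m,n})=\upsilon_n\otimes I_m$.

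\textbf{Defining $\gamma_{m,n}$.} Put $\gamma_{m,n}:=\alpha_{m,n}-\sum_{i,j}E_{ij}\otimes\upsilon_n\otimes E_{ji}$. Then
\begin{equation*}
\tr_1(\gamma_{m,n})=\upsilon_n\otimes I_m-\sum_i\upsilon_n\otimes E_{ii}=0 .
\end{equation*}
For $\tr_2$, compute blockwise: $\tr$ of the $(ij;uv)$ block of $\gamma_{m,n}$ is $\delta_{vi}\delta_{uj}-\delta_{vi}\delta_{uj}\tr(\upsilon_n)=0$, since $\tr(\upsilon_n)=1$. The representation formula holds by construction.

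\textbf{Main obstacle.} The delicate step is the $\tr_1$ identity. One must carefully match index conventions for $\tr_{12}$ and $\alpha^T$ (the transpose acts on all three factors) when extracting coefficients. One must also justify that the general uniformity axiom can be specialised to $p=1$, $C\in\mathbb{F}_n$, giving the $\delta_{m,n}(I_m\otimes D,0)$ identity. If the bookkeeping produces $\upsilon_n^T$ instead, I would adjust by transposing, noting the trace condition is unaffected.
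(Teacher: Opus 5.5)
Your proof is correct and follows essentially the same route as the paper. You take $\upsilon_n$ from the $m=1$ case (Remark \ref{rmk:trivial}), and $\tr_2(\gamma_{m,n})=0$ follows from the block-trace identity $\tr(\alpha_{ij;uv})=\delta_{vi}\delta_{uj}$ of Theorem \ref{thm:canon2}; the only difference is ordering. The paper first builds $\gamma_{m,n}$ and then shows $\Btr(\tr_1(\gamma_{m,n})^T((C-B)\otimes I_m))=0$ from uniformity with $p=1$, whereas you read off $\tr_1(\alpha_{m,n})=\upsilon_n\otimes I_m$ directly from $(I_m\otimes D)\ominus 0_n=\tr(\upsilon_n^TD)I_m$ before defining $\gamma_{m,n}$, which is a slightly cleaner version of the same argument.
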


\begin{proof}
 By Remark \ref{rmk:trivial}, for each $n\in\mathbb{N}$ there exists a matrix $\upsilon_n\in\mathbb{F}_n$
 such that $\tr(\upsilon_n)=1$ and $A\ominus B=\tr(\upsilon_n^T(A-B))$ for all $A,B\in\mathbb{F}_n$.
 Following the proof of Theorem \ref{thm:canon2} (the choice of $E_{11}$ was arbitrary -- we
 required a matrix with unit trace), there exists $\gamma_{m,n}\in\mathbb{F}_{m^2n}$ (dependent on $\upsilon_n$) such that
 $\tr_2(\gamma_{m,n})=0$, and
 \begin{equation*}
  A\ominus B = \tr_{12}(\alpha_{m,n}^T(A\otimes I_m-I_m\otimes B\otimes I_m)),\qquad
  \alpha_{m,n}:= \sum_{i,j=1}^m E_{ij}\otimes\upsilon_n\otimes E_{ji} + \gamma_{m,n}
 \end{equation*}
 for all $A\in\mathbb{F}_m\otimes\mathbb{F}_n$ and $B\in\mathbb{F}_n$, $m,n\in\mathbb{N}$.
 It remains to
 show that $\tr_1(\gamma_{m,n})=0$.
 First let us consider $B,C\in\mathbb{F}_n$.
 Since $\ominus$ is uniform, and following the proof of Lemma \ref{lem:unicanon},
 $(A\oplus C)\ominus B=A\oplus (C\ominus B)$ yields that (recalling that $\tr_2(\gamma_{m,n})=0$ and by use of Lemma \ref{lem:blockpartial})
 \begin{align*}
  \lefteqn{\tr_{12}(\gamma_{m,n}^T(A\otimes I_n\otimes I_m+I_m\otimes C\otimes I_m-I_m\otimes B\otimes I_m))}\qquad & \\
  &= \Btr(\tr_2(\gamma_{m,n})^T(A\otimes I_m))
    + \Btr(\tr_{1}(\gamma_{m,n})^T ((C-B)\otimes I_m)) \\
  &= \Btr(\tr_{1}(\gamma_{m,n})^T ((C-B)\otimes I_m)) \\
  &= 0,
 \end{align*}
 for all $A\in\mathbb{F}_m$, $B,C\in\mathbb{F}_n$ and $m,n\in\mathbb{N}$. Writing $\tr_{1}(\gamma_{m,n})$
 in the form
 \begin{equation*}
  \tr_{1}(\gamma_{m,n}) = \sum_{i,j=1}^n E_{ij}\otimes G_{ij}
 \end{equation*}
 and setting $C-B=E_{kl}$ yields that
 \begin{equation*}
  \sum_{i,j=1}^n \Btr((E_{ji}\otimes G_{ij}^T)(E_{kl}\otimes I_m))
   = \sum_{j=1}^n \Btr(E_{jl}\otimes G_{kj}^T)
   = G_{kl}^T = 0.
 \end{equation*}
 Since $j$ and $k$ were arbitrary, $\tr_{1}(\gamma_{m,n}) = 0$ for all $m,n\in\mathbb{N}.$
 \def\ignore{\color{red}
 We require that
 \begin{equation*}
  (A\oplus C)\ominus B= A\oplus(C\ominus B)=A\oplus 0_p + 0_m\oplus(C\ominus B)   
 \end{equation*}
 for all $m,n,p\in\mathbb{N}$, $A\in\mathbb{F}_m$, $C\in\mathbb{F}_{p}\otimes\mathbb{F}_n$ and $B\in\mathbb{F}_n$.
 By Lemma \ref{lem:unicanon} we need only satisfy
 \begin{gather*}
  \tr_{12}(\gamma_{mp,n}^T(A\otimes I_{pn}\otimes I_{mp}+I_{m}\otimes C\otimes I_{mp}-I_{mp}\otimes B\otimes I_{mp})) \\
  = 0_m\oplus(\tr_{12}(\gamma_{p,n}^T(C\otimes I_p-I_p\otimes B\otimes I_p)))
 \end{gather*}
 for all $A\in\mathbb{F}_m$, $B\in\mathbb{F}_n$ and
 $C\in\mathbb{F}_p\otimes\mathbb{F}_n$. Equivalently, since $\tr_2(\gamma_{mp,n})=0$ (over $\mathbb{F}_{n}$) and $\tr_1(\gamma_{m,n})=0$ (over $\mathbb{F}_m$),
 \begin{equation}
  \label{eq:unigamma}
  \tr_{12}(\gamma_{mp,n}^T(I_{m}\otimes C-I_{mp}\otimes B)\otimes I_{mp})
  = I_m\otimes(\tr_{12}(\gamma_{p,n}^T(C\otimes I_p)))
 \end{equation}
 for all $B\in\mathbb{F}_n$, and $C\in\mathbb{F}_p\otimes\mathbb{F}_n$.
 If $C=0$, we obtain for all $B\in\mathbb{F}_n$
 \begin{equation}
  \label{eq:unigammaB}
  \tr_{12}(\gamma_{mp,n}^T(I_{mp}\otimes B\otimes I_{mp})) = 0_{mp}.
 \end{equation}
 Hence,
 \begin{equation}
  \label{eq:unigammaC}
  \tr_{12}(\gamma_{mp,n}^T(I_{m}\otimes C\otimes I_{mp}))
  = I_m\otimes(\tr_{12}(\gamma_{p,n}^T(C\otimes I_p)))
 \end{equation}
 for all $C\in\mathbb{F}_p\otimes\mathbb{F}_n$.
 First, let us write $\gamma_{mp,n}$ and $\gamma_{p,n}$ in block form, namely
 \begin{equation*}
  \gamma_{p,n} = \sum_{k,l=1}^{pn} E_{kl}\otimes H_{kl},\qquad
  \gamma_{mp,n} = \sum_{i,j=1}^{m}\sum_{k,l=1}^{pn}E_{ij}\otimes E_{kl}\otimes H_{ij;kl}
 \end{equation*}
 where each $H_{kl}\in\mathbb{F}_p$ and each $H_{ij;kl}\in\mathbb{F}_{m}\otimes\mathbb{F}_p$.
 It follows from \eqref{eq:unigammaC} that
 \begin{equation*}
  \sum_{i,j=1}^m\sum_{k,l=1}^{pn}
  \tr(E_{ji})\tr(E_{lk}C)H_{ij;kl}^T
  =
  \sum_{k,l=1}^{pn}\tr(E_{lk}C)I_m\otimes H_{kl}^T
 \end{equation*}
 for all $C\in\mathbb{F}_p\otimes\mathbb{F}_n$.
 If $C=E_{kl}\in\mathbb{F}_{pn}$, then
 \begin{equation*}
  \sum_{i=1}^m H_{ii;kl}^T= I_m\otimes H_{kl}^T.
 \end{equation*}
 In particular, if $p=1$ we obtain
 \begin{equation*}
  \tr_1(\gamma_{m,n}) = \gamma_{1,n}\otimes I_m.
 \end{equation*}
 Now \eqref{eq:unigammaB} becomes
 \begin{equation*}
  \tr(\gamma_{1,n}^TB)\otimes I_{mp} = 0_{mp}
 \end{equation*}
 for all $B\in\mathbb{F}_n$. Hence, $\gamma_{1,n}=0$ and $\tr_1(\gamma_{m,n})=0$ for all $m,n\in\mathbb{N}$.
 }
\end{proof}

Now, by Proposition \ref{prop:eqvrepr}, it follows that for a given sequence of $n\times n$ matrices $(\upsilon_n)$, the matrices $\gamma_{m,n}$ uniquely define a uniform Kronecker difference.

\begin{corollary}
 \label{cor:uniqueuniform}%
 Let $\ominus$ be a uniform Kronecker difference with a given
 sequence $(\upsilon_n)$ of $n\times n$ matrices with $\tr(\upsilon_n)=1$ as in Theorem \ref{thm:unicanon2}. Then there exist unique matrices
 $\gamma_{m,n}\in\mathbb{F}_{m^2n}$, $m,n\in\mathbb{N}$, with $\tr_{1}(\gamma_{m,n})=0$ and $\tr_{2}(\gamma_{m,n})=0$ and
  \begin{equation*}
  A\ominus B = \tr_{12}(\alpha_{m,n}^T(A\otimes I_m-I_m\otimes B\otimes I_m)),\qquad
  \alpha_{m,n}:= \sum_{i,j=1}^m E_{ij}\otimes\upsilon_n\otimes E_{ji} + \gamma_{m,n},
 \end{equation*}
 for all $A\in\mathbb{F}_m\otimes\mathbb{F}_n$ and $B\in\mathbb{F}_n$, $m,n\in\mathbb{N}$.
\end{corollary}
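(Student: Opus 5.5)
Existence is already supplied by Theorem \ref{thm:unicanon2}: for the given uniform Kronecker difference $\ominus$ with generating sequence $(\upsilon_n)$, there are matrices $\gamma_{m,n}\in\mathbb{F}_{m^2n}$ with $\tr_1(\gamma_{m,n})=0$ and $\tr_2(\gamma_{m,n})=0$ realising $\ominus$ in the stated form. So the plan is to prove uniqueness only, for each fixed pair $(m,n)$, and to obtain it from Proposition \ref{prop:eqvrepr}.

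Fix $m,n$ and suppose $\gamma_{m,n}$ and $\gamma'_{m,n}$ both satisfy the trace conditions and both represent $\ominus$ on $\mathbb{F}_m\otimes\mathbb{F}_n\times\mathbb{F}_n$. Set $\delta_1=\delta_2=\ominus$ restricted to these orders. Each is linear, since uniform differences satisfy \eqref{eq:D3}--\eqref{eq:D4}, and each satisfies $\delta_i(A\oplus B,B)=A$. Take $\beta_1=\beta_2=\upsilon_n$, which has unit trace, and take $\gamma_1=\gamma_{m,n}$ and $\gamma_2=\gamma'_{m,n}$. Both $\gamma_i$ satisfy $\tr_1(\gamma_i)=0$, which is exactly the hypothesis of Proposition \ref{prop:eqvrepr}. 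Since $\delta_1=\delta_2$, that proposition gives $(\beta_1,\gamma_1)=(\beta_2,\gamma_2)$, hence $\gamma_{m,n}=\gamma'_{m,n}$.

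The only point needing care is that the sequence $(\upsilon_n)$ is fixed in advance. This matters because the decomposition $\alpha=\sum E_{ij}\otimes\upsilon_n\otimes E_{ji}+\gamma$ is unique only once $\beta$ is pinned down; changing $\upsilon_n$ would shift $\gamma$. The corollary's hypothesis supplies this, so the argument goes through. The condition $\tr_2(\gamma_{m,n})=0$ plays no role in uniqueness; it simply carries over from the existence statement.
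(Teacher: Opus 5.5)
Your argument is correct and is the same as the paper's: existence is taken from Theorem~\ref{thm:unicanon2}, and uniqueness follows from Proposition~\ref{prop:eqvrepr} applied with $\delta_1=\delta_2=\ominus$, $\beta_1=\beta_2=\upsilon_n$ and $\gamma_1=\gamma_{m,n}$, $\gamma_2=\gamma'_{m,n}$. One minor remark: your caveat that the sequence must be fixed in advance is harmless but not needed, because Proposition~\ref{prop:eqvrepr} also concludes $\beta_1=\beta_2$ (indeed $\tr_1(\alpha_{m,n})=\upsilon_n\otimes I_m$ once $\tr_1(\gamma_{m,n})=0$), so $\ominus$ itself already determines $\upsilon_n$.
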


We note that by Lemma \ref{lem:unicanon}, we obtain the following corollary.
\begin{corollary}
 Let $\ominus$ be a uniform Kronecker difference given by the pairs $(\upsilon_n,\gamma_{m,n})$ of matrices,
 $m,n\in\mathbb{N}$, as given in Corollary \ref{cor:uniqueuniform}.
 Then the pairs of matrices $(\upsilon_n,0)$ yield
 a uniform Kronecker difference $\ominus_0$.
\end{corollary}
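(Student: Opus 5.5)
This corollary is almost a direct application of Lemma \ref{lem:unicanon}. The plan is to show that the family obtained by setting $\gamma_{m,n}=0$ has exactly the form treated there.

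First, we need that each $\upsilon_n$ satisfies $\tr(\upsilon_n)=1$. This is part of the data in Corollary \ref{cor:uniqueuniform}, inherited from Theorem \ref{thm:unicanon2}, so it comes for free. Next we define $\ominus_0$ by
\begin{equation*}
 A\ominus_0 B := \tr_{12}(\alpha_{m,n}^{(0)T}(A\otimes I_m-I_m\otimes B\otimes I_m)),\qquad
 \alpha_{m,n}^{(0)} := \sum_{i,j=1}^m E_{ij}\otimes\upsilon_n\otimes E_{ji},
\end{equation*}
for all $A\in\mathbb{F}_m\otimes\mathbb{F}_n$, $B\in\mathbb{F}_n$ and $m,n\in\mathbb{N}$. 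This is precisely the operation in Lemma \ref{lem:unicanon}.

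By Lemma \ref{lem:unicanon}, applied to the sequence $(\upsilon_n)$, the operation $\ominus_0$ is a Kronecker difference; internally this uses Lemma \ref{lem:canon3} with $\gamma_0=0$, and $\tr_2(0)=0$ holds trivially. The same lemma gives the uniformity identity $(A\oplus C)\ominus_0 B = A\oplus(C\ominus_0 B)$. Linearity, i.e.\ \eqref{eq:D3} and \eqref{eq:D4}, is immediate because $(A,B)\mapsto \tr_{12}(\alpha^T(A\otimes I_m - I_m\otimes B\otimes I_m))$ is linear in the pair $(A,B)$. Together these give all the conditions of Definition \ref{def:kduniform}.

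It remains to note that $(\upsilon_n,0)$ is an admissible pair in the sense of Corollary \ref{cor:uniqueuniform}, since $\tr_1(0)=\tr_2(0)=0$. By Proposition \ref{prop:eqvrepr}, $\ominus_0$ is then the uniform difference that the pairs $(\upsilon_n,0)$ define. There is no real obstacle here. The only step that needs checking is that Lemma \ref{lem:unicanon} uses nothing about $\upsilon_n$ beyond $\tr(\upsilon_n)=1$, and that its uniformity computation covers all $p$. Both are true, since its proof takes $C\in\mathbb{F}_p\otimes\mathbb{F}_n$ for general $p$.
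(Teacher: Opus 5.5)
Your proposal is correct and matches the paper's approach: the paper obtains this corollary directly from Lemma \ref{lem:unicanon}, which needs only $\tr(\upsilon_n)=1$, exactly as you argue. Your explicit linearity check is a reasonable supplement to that lemma, and the appeal to Proposition \ref{prop:eqvrepr} is harmless but not needed for the statement.
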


Although the matrices $\gamma_{m,n}$ in Corollary \ref{cor:uniqueuniform} uniquely define uniform
Kronecker differences,
associativity of the Kronecker sum \eqref{eq:S5} reflects
in the sequence of matrices $(\upsilon_n)$.

\begin{theorem}
 \label{thm:assoc}%
 Let $\ominus$ be a uniform Kronecker difference given by the pairs $(\upsilon_n,\gamma_{m,n})$ of matrices,
 $m,n\in\mathbb{N}$, as given in Corollary \ref{cor:uniqueuniform}.
 If the following condition holds for all $X\in\mathbb{F}_m\otimes\mathbb{F}_p\otimes\mathbb{F}_q$,
 $Y\in\mathbb{F}_q$, $Z\in\mathbb{F}_p$ and $m,p,q\in\mathbb{N}$
 \begin{equation*}
  (X\ominus Y)\ominus Z = X\ominus (Z\oplus Y),
 \end{equation*}
 then for all $p,q\in\mathbb{N}$:
 \begin{equation*}
  \Btr(\upsilon_{pq})=\upsilon_q, \qquad \Ptr(\upsilon_{pq})=\upsilon_p.
 \end{equation*}
\end{theorem}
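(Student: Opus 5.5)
Specialize the associativity condition to $m=1$. In that case everything reduces to scalars, and by Remark~\ref{rmk:trivial} the Kronecker difference on equal orders is $A\ominus B=\tr(\upsilon_n^T(A-B))$.

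Take $X\in\mathbb{F}_p\otimes\mathbb{F}_q$, $Y\in\mathbb{F}_q$ and $Z\in\mathbb{F}_p$. The right-hand side is
\begin{equation*}
 X\ominus(Z\oplus Y)=\tr(\upsilon_{pq}^T(X-Z\otimes I_q-I_p\otimes Y)).
\end{equation*}
For the left-hand side, $X\ominus Y$ lies in $\mathbb{F}_p$. By Theorem~\ref{thm:unicanon2} with $(m,n)=(p,q)$, and using Remark~\ref{rem:B=0}, it equals $(X-I_p\otimes Y)\ominus 0_q$. This is a linear map $L_{p,q}:\mathbb{F}_{pq}\to\mathbb{F}_p$, determined by $\upsilon_q$ and $\gamma_{p,q}$, with $L_{p,q}(W\otimes I_q)=W$. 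Then
\begin{equation*}
 (X\ominus Y)\ominus Z=\tr\bigl(\upsilon_p^T(L_{p,q}(X-I_p\otimes Y)-Z)\bigr).
\end{equation*}

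Compare the two sides using test matrices.

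First set $X=0$ and $Z=0$. The left side is $-\tr(\upsilon_p^T L_{p,q}(I_p\otimes Y))$ and the right side is $-\tr(\upsilon_{pq}^T(I_p\otimes Y))=-\tr(\Btr(\upsilon_{pq})^TY)$. To evaluate $L_{p,q}(I_p\otimes Y)$, use the canonical form: the main term $\sum E_{ij}\otimes\upsilon_q\otimes E_{ji}$ gives $\tr(\upsilon_q^TY)I_p$. The $\gamma$ term is $\tr_{12}(\gamma_{p,q}^T(I_p\otimes Y\otimes I_p))$, which reduces to $\Btr(\tr_1(\gamma_{p,q})^T(Y\otimes I_p))$ and so vanishes because $\tr_1(\gamma_{p,q})=0$. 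This is the same computation as in the proof of Theorem~\ref{thm:unicanon2}, via Lemma~\ref{lem:blockpartial}. Since $\tr(\upsilon_p)=1$, the left side equals $-\tr(\upsilon_q^TY)$. As $Y$ is arbitrary, $\Btr(\upsilon_{pq})=\upsilon_q$. I will need to check the index convention so that the block trace really corresponds to $I_p\otimes Y$; if it does not, swap it with $\Ptr$.

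Next set $X=0$ and $Y=0$. The left side is $-\tr(\upsilon_p^TZ)$ and the right side is $-\tr(\upsilon_{pq}^T(Z\otimes I_q))=-\tr(\Ptr(\upsilon_{pq})^TZ)$. Hence $\Ptr(\upsilon_{pq})=\upsilon_p$.

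This route never needs to resolve $L_{p,q}(X)$ for general $X$. That is the only part where $\gamma_{p,q}$ would enter in a nontrivial way, and it is sidestepped by the choice $X=0$.

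The main obstacle is bookkeeping. I must confirm the definitions of $\Btr$ and $\Ptr$ in the appendix, namely which one traces out which tensor factor. I must also carefully justify the vanishing of the $\gamma$ contribution for $I_p\otimes Y$ using $\tr_1(\gamma_{p,q})=0$. Both points are routine given Lemma~\ref{lem:blockpartial} and the computation already done in Theorem~\ref{thm:unicanon2}.
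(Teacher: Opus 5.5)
Your argument is correct, and it takes a leaner route than the paper.

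The paper first invokes Lemma~\ref{lem:zerosimp}(c) to reduce to $Y=0_q$, $Z=0_p$. It keeps $m$ arbitrary and tests with $X=I_m\otimes I_p\otimes Y$ and $X=I_m\otimes Z\otimes I_q$. It must then dispose of three correction matrices $\gamma_{m,pq}$, $\gamma_{mp,q}$, $\gamma_{m,p}$ via Lemma~\ref{lem:trzidz} and $\tr_1(\gamma)=0$, and it finishes with Lemma~\ref{lem:Btrequiv}.

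You instead set $m=1$ and put the test matrix into the second argument, with $X=0$. This has three effects:
\begin{itemize}
\item Because $\gamma_{1,n}=0$, both outer differences collapse to $\tr(\upsilon_n^T(\cdot))$.
\item The only surviving correction term is $\tr_{12}(\gamma_{p,q}^T(I_p\otimes Y\otimes I_p))=\Btr(\tr_1(\gamma_{p,q})^T(Y\otimes I_p))=0$, exactly as you say.
\item The $Z$-test involves no $\gamma$ at all, since $0\ominus 0_q=0$ by linearity.
\end{itemize}

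Your test data are really the same as the paper's: by Remark~\ref{rem:B=0} and linearity, $0\ominus Y=-((I_p\otimes Y)\ominus 0_q)$. So the key identity $\tr(\upsilon_{pq}^T(I_p\otimes Y))=\tr(\upsilon_p)\tr(\upsilon_q^TY)$ is identical in both proofs. What you gain is that neither the reduction lemma nor general $m$ is needed.

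Your concern about the index convention resolves without a swap. In the paper $\Btr$ traces out the first tensor factor, so $\tr(\upsilon_{pq}^T(I_p\otimes Y))=\tr(\Btr(\upsilon_{pq})^TY)$ and $\tr(\upsilon_{pq}^T(Z\otimes I_q))=\tr(\Ptr(\upsilon_{pq})^TZ)$. Nondegeneracy of $(M,Y)\mapsto\tr(M^TY)$ then gives both conclusions.
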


\begin{proof}
 Let $p,q\in\mathbb{N}$.
 By Lemma \ref{lem:zerosimp}(c), it suffices to show that
 the following condition holds: if for all $m\in\mathbb{N}$ and $X\in\mathbb{F}_{m}\otimes\mathbb{F}_p\otimes\mathbb{F}_q$,
 \begin{equation*}
  (X\ominus 0_q)\ominus 0_p = X\ominus (0_p\oplus 0_q) = X\ominus 0_{pq},
 \end{equation*}
 then:
 \begin{equation*}
  \Btr(\upsilon_{pq})=\upsilon_q, \qquad \Ptr(\upsilon_{pq})=\upsilon_p.
 \end{equation*}
 
 To obtain the result $\Btr(\upsilon_{pq})=\upsilon_q$, it suffices to consider $X=I_m\otimes I_p\otimes Y$ where $Y\in\mathbb{F}_q$.
 We have
 \begin{align*}
  X\ominus 0_{pq}
   &= \tr_{12}(\alpha_{m,pq}^T(X\otimes I_m)) \\
   &= \sum_{i,j=1}^m \tr((E_{ji}\otimes\upsilon_{pq}^T)(I_m\otimes I_p\otimes Y))E_{ij}
    + \tr_{12}(\gamma_{m,pq}^T(I_m\otimes I_p\otimes Y\otimes I_m)) \\
   &= \sum_{i,j=1}^m \tr((E_{ji}\otimes\upsilon_{pq}^T)(I_m\otimes I_p\otimes Y))E_{ij} \\[5pt]
   &= \tr(\upsilon_{pq}^T(I_p\otimes Y))I_m,
 \end{align*}
 using Lemma \ref{lem:trzidz} and the fact that $\tr_1(\gamma_{m,pq}^T)=0$. On the other hand,
 \begin{align*}
  (X\ominus 0_q)\ominus 0_p
    &= \left[\tr_{12}(\alpha_{mp,q}^T(I_m\otimes I_p\otimes Y\otimes I_{mp}))\right]\ominus 0_p \phantom{\left[\sum_{i,j=1}^{mnp}\right]}\\
%    &= \left(\sum_{i,j=1}^{mp}\tr((E_{ji}\otimes\upsilon_{q}^T)(I_m\otimes I_p\otimes Y))E_{ij}\right)\ominus 0_p\\
%    &\qquad +\left(\tr_{12}(\gamma_{mp,q}^T(I_m\otimes I_p\otimes Y\otimes I_{mp})\right)\ominus 0_p \\
    &= \left[\sum_{i,j=1}^{mp}\tr((E_{ji}\otimes\upsilon_{q}^T)(I_{mp}\otimes Y))E_{ij}\right]\ominus 0_p\\
    &= \tr(\upsilon_q^TY)(I_{mp}\ominus 0_p),\phantom{\left[\sum_{i,j=1}^{mnp}\right]}\\
 \intertext{where we used that $\displaystyle\sum_{i,j=1}^{mp}\tr(E_{ji})=1,$ so that}
  (X\ominus 0_q)\ominus 0_p
    &= \tr(\upsilon_q^TY)\tr_{12}(\alpha_{m,p}^T (I_{mp}\otimes I_m))\phantom{\sum_{i,j=1}^{mnp}}\\
    &= \tr(\upsilon_q^TY)\left[\sum_{i,j=1}^{m}\tr(E_{ji})\tr(\upsilon_{p}^T)E_{ij}\right]\\
    &= \tr(\upsilon_{q}^TY)\tr(\upsilon_p^T)I_m \phantom{\left[\sum_{i,j=1}^{mnp}\right]}\\
    &= \tr((\upsilon_p\otimes\upsilon_{q})^T(I_p\otimes Y))I_m,
    \phantom{\left[\sum_{i,j=1}^{mnp}\right]}
 \end{align*}
 where we again used Lemma \ref{lem:trzidz}, $\tr_1(\gamma_{mp,q}^T)=0$ and $\tr_1(\gamma_{m,p}^T)=0$.
 Hence, by Lemma \ref{lem:Btrequiv}, $\Btr(\upsilon_{pq})=\Btr(\upsilon_p\otimes\upsilon_q)=\tr(\upsilon_p)\upsilon_q=\upsilon_q.$

 To obtain the result $\Ptr(\upsilon_{pq})=\upsilon_p$, it suffices to consider $X=I_m\otimes Z\otimes I_q$ where $Z\in\mathbb{F}_p$.
 We have
 \begin{align*}
  X\ominus 0_{pq}
   &= \tr_{12}(\alpha_{m,pq}^T(X\otimes I_m))\phantom{\sum_{i,j=1}^m}\\
   &= \sum_{i,j=1}^m \tr((E_{ji}\otimes\upsilon_{pq}^T)(I_m\otimes Z\otimes I_q))E_{ij} \\
    & \qquad + \sum_{i,j=1}^m \tr_{12}(\gamma_{m,pq}^T(I_m\otimes Z\otimes I_q\otimes I_m)) \\
   &= \sum_{i,j=1}^m \tr((E_{ji}\otimes\upsilon_{pq}^T)(I_m\otimes Z\otimes I_q))E_{ij} \\
   &= \tr(\upsilon_{pq}^T(Z\otimes I_q))E_{ij},\phantom{\sum_{i,j=1}^m}
 \end{align*}
 using Lemma \ref{lem:trzidz} and the fact that $\tr_1(\gamma_{m,pq})=0$. Now,
 \begin{align*}
  (X\ominus 0_q)\ominus 0_p
    &= \left[\tr_{12}(\alpha_{mp,q}^T(I_m\otimes Z\otimes I_q\otimes I_{mp}))\right]\ominus 0_p
    \phantom{\sum_{i,j=1}^m} \\
    &= \left[\sum_{i,j=1}^{mp}\tr((E_{ji}\otimes\upsilon_{q}^T)(I_m\otimes Z\otimes I_q))E_{ij}\right]\ominus 0_p\\
    &= \tr(\upsilon_q^T)\left[\sum_{i,j=1}^{mp}\tr(E_{ji}(I_m\otimes Z))E_{ij}\right]\ominus 0_p
    \phantom{\sum_{i,j=1}^m} \\
    &= \tr(\upsilon_q^T)(I_m\otimes Z)\ominus 0_p
    \phantom{\sum_{i,j=1}^m} \\
    &= \tr(\upsilon_q^T)\tr_{12}(\alpha_{m,p}^T (I_m\otimes Z\otimes I_m))
    \phantom{\sum_{i,j=1}^m} \\
    &= \tr(\upsilon_q^T)\left[\sum_{i,j=1}^{m}\tr(E_{ji})\tr(\upsilon_{p}^TZ)E_{ij}\right]\\
    &= \tr(\upsilon_{q}^T)\tr(\upsilon_p^TZ)I_m 
    \phantom{\sum_{i,j=1}^m} \\
    &= \tr((\upsilon_p\otimes\upsilon_q)^T(Z\otimes I_q))I_m.
    \phantom{\sum_{i,j=1}^m}
 \end{align*}
 Hence, by Lemma \ref{lem:Btrequiv}, $\Ptr(\upsilon_{pq})=\Ptr(\upsilon_p\otimes\upsilon_q)=\tr(\upsilon_q)\upsilon_p=\upsilon_p.$
\end{proof}

Let us consider a special case for Theorem \ref{thm:assoc}, namely the case where each $\gamma_{m,n}=0$.
\begin{proposition}
 \label{prop:assoc}%
 Let $\ominus$ be a uniform Kronecker difference given by the pairs $(\upsilon_n,0)$ of matrices,
 $m,n\in\mathbb{N}$, as given in Corollary \ref{cor:uniqueuniform}.
 If the following condition holds for all $X\in\mathbb{F}_m\otimes\mathbb{F}_p\otimes\mathbb{F}_q$,
 $Y\in\mathbb{F}_q$, $Z\in\mathbb{F}_p$ and $m,p,q\in\mathbb{N}$
 \begin{equation*}
  (X\ominus Y)\ominus Z = X\ominus (Z\oplus Y),
 \end{equation*}
 then for all $p,q\in\mathbb{N}$:
 \begin{equation*}
  \upsilon_{pq} = \upsilon_p\otimes\upsilon_q = \upsilon_q\otimes\upsilon_p.
 \end{equation*}
\end{proposition}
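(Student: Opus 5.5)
The plan is to specialise the associativity hypothesis to zero second arguments and the trivial outer dimension $m=1$. With $\gamma_{m,n}=0$ this should force $\upsilon_{pq}$ to be exactly $\upsilon_p\otimes\upsilon_q$, rather than only fixing its partial traces as in Theorem~\ref{thm:assoc}.

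\textbf{Step 1: an explicit formula for $A\ominus 0_n$.} Set $Y=0_q$ and $Z=0_p$ in the hypothesis; this gives $(X\ominus 0_q)\ominus 0_p=X\ominus 0_{pq}$. Write $A\in\mathbb{F}_m\otimes\mathbb{F}_n$ in block form $A=\sum_{k,l=1}^m E_{kl}\otimes A_{kl}$. With $\alpha_{m,n}=\sum_{i,j}E_{ij}\otimes\upsilon_n\otimes E_{ji}$ we get
\begin{equation*}
 A\ominus 0_n=\tr_{12}(\alpha_{m,n}^T(A\otimes I_m))=\sum_{i,j=1}^m\tr\bigl((E_{ji}\otimes\upsilon_n^T)A\bigr)E_{ij}=\sum_{i,j=1}^m \tr(\upsilon_n^TA_{ij})E_{ij}.
\end{equation*}
So $\ominus 0_n$ replaces each $n\times n$ block $A_{ij}$ by the scalar $\tr(\upsilon_n^TA_{ij})$. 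For $m=1$ this is simply $A\ominus 0_n=\tr(\upsilon_n^TA)$, consistent with Remark~\ref{rmk:trivial}. This is the same computation as in the proof of Theorem~\ref{thm:assoc}, with the $\gamma$ terms now absent.

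\textbf{Step 2: compare both sides for $m=1$.} Take $m=1$ and an arbitrary $X=\sum_{k,l=1}^p E_{kl}\otimes X_{kl}\in\mathbb{F}_p\otimes\mathbb{F}_q$, with $X_{kl}\in\mathbb{F}_q$. By Step 1, $X\ominus 0_q=\sum_{k,l}\tr(\upsilon_q^TX_{kl})E_{kl}\in\mathbb{F}_p$. Applying $\ominus 0_p$ with outer dimension $1$ then gives
\begin{equation*}
 (X\ominus 0_q)\ominus 0_p=\sum_{k,l=1}^p(\upsilon_p)_{kl}\tr(\upsilon_q^TX_{kl})=\tr\bigl((\upsilon_p\otimes\upsilon_q)^TX\bigr).
\end{equation*}
On the other side, $X\ominus 0_{pq}=\tr(\upsilon_{pq}^TX)$. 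Hence $\tr\bigl((\upsilon_{pq}-\upsilon_p\otimes\upsilon_q)^TX\bigr)=0$ for every $X\in\mathbb{F}_{pq}$. Choosing $X=E_{kl}$ for each $k,l$ gives $\upsilon_{pq}=\upsilon_p\otimes\upsilon_q$.

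\textbf{Step 3: symmetry.} The hypothesis holds for all $p,q$, so swapping their roles gives $\upsilon_{qp}=\upsilon_q\otimes\upsilon_p$. Since $\upsilon_n$ depends only on the integer $n$ and $pq=qp$, we have $\upsilon_{pq}=\upsilon_{qp}$. Therefore $\upsilon_p\otimes\upsilon_q=\upsilon_q\otimes\upsilon_p$.

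The only real care needed is in Step 2. One must check that the index convention in $\tr_{12}(\alpha^T(\cdot))$ contracts the $(k,l)$ entry of $\upsilon_p$ with the block $X_{kl}$ and does not pick up a transpose. The identity $\tr((E_{ji}\otimes\upsilon^T)A)=\tr(\upsilon^TA_{ij})$ settles this, and everything else is routine.
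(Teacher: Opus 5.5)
Your proof is correct and follows essentially the same route as the paper: you specialize to $Y=0_q$, $Z=0_p$, evaluate both sides through the canonical form with $\gamma_{m,n}=0$, and conclude from $\tr\bigl((\upsilon_{pq}-\upsilon_p\otimes\upsilon_q)^TX\bigr)=0$ for all $X$. The differences are that taking $m=1$ spares you the paper's bookkeeping with the outer factors $E_{ij}$ and the reindexing of $(mp)\times(mp)$ matrix units, and that your Step~3 spells out the symmetry argument $\upsilon_{pq}=\upsilon_{qp}$ for the second equality, which the paper leaves implicit.
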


\begin{proof}
 Let $p,q\in\mathbb{N}$.
 By Lemma \ref{lem:zerosimp}(c), it suffices to show that
 the following condition holds: if for all $m\in\mathbb{N}$ and $X\in\mathbb{F}_{m}\otimes\mathbb{F}_p\otimes\mathbb{F}_q$,
 \begin{equation*}
  (X\ominus 0_q)\ominus 0_p = X\ominus (0_p\oplus 0_q) = X\ominus 0_{pq},
 \end{equation*}
 then:
 \begin{equation*}
  \upsilon_{pq}=\upsilon_p\otimes\upsilon_q.
 \end{equation*}
 We have
 \begin{align*}
  X\ominus 0_{pq}
   &= \tr_{12}(\alpha_{m,pq}^T(X\otimes I_m))
   \phantom{\sum_{i,j=1}^m}\\
   &= \sum_{i,j=1}^m \tr_{12}((E_{ji}\otimes \upsilon_{pq}^T\otimes E_{ij})(X\otimes I_m)) \\
   &= \sum_{i,j=1}^m \tr((E_{ji}\otimes \upsilon_{pq}^T)X)E_{ij}.
 \end{align*}
 On the other hand, noting that (from the context below) $E_{ij}$ will denote an $m\times m$ matrix and $E_{kl}$ will
 denote an $(mp)\times(mp)$ matrix,
 \begin{align*}
  \lefteqn{(X\ominus 0_q)\ominus 0_p}\quad&\\
    &= \left[\tr_{12}(\alpha_{mp,q}^T(X\otimes I_{mp}))\right]\ominus 0_p
    \phantom{\sum_{i,j=1}^m} \\
    &= \left[\sum_{k,l=1}^{mp}\tr_{12}((E_{lk}\otimes\upsilon_{q}^T\otimes E_{kl})(X\otimes I_{mp}))\right]\ominus 0_p \\
    &= \sum_{k,l=1}^{mp}\tr((E_{lk}\otimes\upsilon_{q}^T)X)\tr_{12}(\alpha_{m,p}^T (E_{kl}\otimes I_m))
\intertext{since $\tr_{12}((E_{lk}\otimes\upsilon_{q}^T)X\otimes E_{kl})=\tr((E_{lk}\otimes\upsilon_{q}^T)X)E_{kl}$, whence}
  \lefteqn{(X\ominus 0_q)\ominus 0_p}\quad&\\
    &= \sum_{i,j=1}^m\sum_{k,l=1}^{mp}\tr((E_{lk}\otimes\upsilon_{q}^T)X)\tr_{12}((E_{ji}\otimes \upsilon_p^T\otimes E_{ij})(E_{kl}\otimes I_m)) \\
    &= \sum_{i,j=1}^m\sum_{k,l=1}^{mp}\tr((E_{lk}\otimes\upsilon_{q}^T)X)\tr((E_{ji}\otimes \upsilon_p^T)E_{kl})E_{ij} \\
    &= \sum_{i,j=1}^m\sum_{s,t=1}^{m}\sum_{u,v=1}^p\tr((E_{ts}\otimes E_{vu}\otimes\upsilon_{q}^T)X)\tr((E_{ji}\otimes \upsilon_p^T)(E_{st}\otimes E_{uv}))E_{ij},\\
 \intertext{where for every $k,l\in\{1,\ldots,mp\}$ there exists unique $s,t\in\{1,\ldots,m\}$ and $u,v\in\{1,\ldots,p\}$ such that $E_{lk}=E_{ji}\otimes E_{vu}$,}
  \lefteqn{(X\ominus 0_q)\ominus 0_p}\quad &\\
    &= \sum_{i,j=1}^m\sum_{u,v=1}^p\tr((E_{ji}\otimes E_{vu}\otimes\upsilon_{q}^T)X)\tr(\upsilon_p^TE_{uv}))E_{ij} \\
    &= \sum_{i,j=1}^m\tr\left(\left(E_{ji}\otimes \left(\sum_{u,v=1}^p\tr(\upsilon_p^TE_{uv})E_{vu}\right)\otimes\upsilon_{q}^T\right)X\right)E_{ij} \\
    &= \sum_{i,j=1}^m\tr((E_{ji}\otimes \upsilon_p^T\otimes\upsilon_{q}^T)X))E_{ij}.
 \end{align*}
 It follows that for all $i,j\in\{1,\ldots,m\}$ and for all $X\in\mathbb{F}_m\otimes\mathbb{F}_p\otimes\mathbb{F}_q$
 \begin{equation*}
  \tr((E_{ji}\otimes \upsilon_{pq}^T)X)
   = \tr((E_{ji}\otimes \upsilon_p^T\otimes\upsilon_{q}^T)X).
 \end{equation*}
 We conclude that $E_{ji}\otimes \upsilon_{pq}^T=E_{ji}\otimes \upsilon_p^T\otimes\upsilon_{q}^T$ and hence
 \begin{equation*}
  \upsilon_{pq} = \upsilon_p\otimes\upsilon_{q}. \qedhere
 \end{equation*}
\end{proof}

\begin{lemma}
 \label{lem:commvec}%
 Let $\mathbf{0}\neq\mathbf{a}\in\mathbb{F}^2$ and $\mathbf{0}\neq\mathbf{b}\in\mathbb{F}^q$, where $q$ is a prime number.
 Then $\mathbf{a}\otimes\mathbf{b}=\mathbf{b}\otimes\mathbf{a}$ if and only if
 \begin{enumerate}[label=(\alph*)]
  \item $q=2$ and $\mathbf{a}=\beta\mathbf{b}$ for some $\beta\in\mathbb{F}$,
  \item $q\neq 2$ and $\mathbf{a}$ and $\mathbf{b}$ have one of the forms (for some $\beta\in\mathbb{F}$)
        \begin{enumerate}[label=(\roman*)]
         \item $\mathbf{a}=(1,0)^T$, $\mathbf{b}=\beta(1,0,\ldots,0)^T$,
         \item $\mathbf{a}=(0,1)^T$, $\mathbf{b}=\beta(0,\ldots,0,1)^T$,
         \item $\mathbf{a}=(1,1)^T$, $\mathbf{b}=\beta(1,1,\ldots,1)^T$.
        \end{enumerate}
 \end{enumerate}
\end{lemma}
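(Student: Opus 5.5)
Write $\mathbf{a}=(a_1,a_2)^T$ and $\mathbf{b}=(b_1,\ldots,b_q)^T$, and compare entries. By \eqref{eq:kronent} (the vector version), the entry of $\mathbf{a}\otimes\mathbf{b}$ in position $(i-1)q+k$ is $a_ib_k$. The entry of $\mathbf{b}\otimes\mathbf{a}$ in position $2(k-1)+i$ is $b_ka_i$. Hence $\mathbf{a}\otimes\mathbf{b}=\mathbf{b}\otimes\mathbf{a}$ is equivalent to the system
\begin{equation*}
 c_{(i-1)q+k} = c_{2(k-1)+i},\qquad c_{2(k-1)+i}:=b_ka_i,
\end{equation*}
that is, $a_ib_k=a_{i'}b_{k'}$ whenever $(i-1)q+k=2(k'-1)+i'$. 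I will enumerate these constraints using the index map $\pi:2(k-1)+i\mapsto (i-1)q+k$. This is the vec permutation $\sigma_{2,q}$ acting on $\{1,\ldots,2q\}$. The constraint says exactly that the vector $\mathbf{c}=\mathbf{b}\otimes\mathbf{a}$ is constant on the cycles of $\pi$.

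\emph{Case $q=2$.} The only nontrivial constraint is $a_1b_2=a_2b_1$, since $\pi$ swaps positions $2$ and $3$ and fixes $1$ and $4$. This says $\det[\mathbf{a}\ \mathbf{b}]=0$. Since $\mathbf{b}\neq\mathbf{0}$, it is equivalent to $\mathbf{a}=\beta\mathbf{b}$. Conversely, any such pair satisfies $a_1b_2=a_2b_1$, which gives (a).

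\emph{Case $q$ an odd prime.} Here I write out the chain of equalities explicitly. The position $1$ gives $a_1b_1=a_1b_1$, and the position $2q$ gives $a_2b_q=a_2b_q$. The remaining positions link $a_1b_k$ and $a_2b_k$ through the shuffle: position $j$ of $\mathbf{a}\otimes\mathbf{b}$ must equal position $j$ of $\mathbf{b}\otimes\mathbf{a}$. This gives
\begin{equation*}
 (a_1b_1,a_1b_2,\ldots,a_1b_q,a_2b_1,\ldots,a_2b_q)=(a_1b_1,a_2b_1,a_1b_2,a_2b_2,\ldots,a_1b_q,a_2b_q).
\end{equation*}
I then split on the support of $\mathbf{a}$.
\begin{enumerate}[label=(\roman*)]
 \item If $a_2=0$, then $a_1\neq0$, and the first half gives $a_1b_k=0$ whenever the matching position on the right contains $a_2$. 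Inductively this forces $b_2=\cdots=b_q=0$, which is form (i).
 \item If $a_1=0$, the symmetric argument from the other end gives form (ii).
 \item If $a_1a_2\neq0$, set $t=a_2/a_1$. Then $\mathbf{c}$ constant on a cycle of $\pi$ gives relations of the form $b_k=t^{e}b_{k'}$ along each cycle.
\end{enumerate}

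The main obstacle is subcase (iii). I plan to use the structure of $\pi$: on the indices $\{2,\ldots,2q-1\}$ it acts as multiplication by $q$ modulo $2q-1$ (the standard description of the perfect shuffle). I then follow one cycle and deduce $b_{k+1}=t\,b_k$ type relations until they close up, giving $b_k=t^{k-1}b_1$ together with a closing condition $t^{r}=1$. The aim is to reach $t=1$ and $\mathbf{b}=\beta(1,\ldots,1)^T$; I expect to need $q$ prime to control the cycle lengths here. If the modular argument becomes messy, I will instead compare the first and second halves directly. That comparison gives $\mathbf{b}=(b_1,\ldots,b_q)$ with $a_1b_{2s-1}$ and $a_1b_{2s}$ matched to $a_1b_s$ and $a_2b_s$ respectively, and I will iterate from both ends to force all $b_k$ equal and $a_1=a_2$.

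Finally, the converse in (b) is immediate. For each of the forms (i)--(iii), both products $\mathbf{a}\otimes\mathbf{b}$ and $\mathbf{b}\otimes\mathbf{a}$ equal, respectively, $\beta\mathbf{e}_1$, $\beta\mathbf{e}_{2q}$, and $\beta(1,\ldots,1)^T$.
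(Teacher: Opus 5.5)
Your treatment of $q=2$, of the subcases $a_2=0$ and $a_1=0$, and of the converse is fine. In the subcase $a_2=0$ the first-half comparison gives $b_{2k}=0$ and $b_{2k-1}=b_k$, and induction on the index kills $b_2,\dots,b_q$. You get $\mathbf{a}=a_1(1,0)^T$, so, as in the paper, the forms in (b) only hold up to a nonzero scalar on $\mathbf{a}$. The genuine gap is subcase (iii): you leave it as a plan, and the plan rests on two incorrect claims.

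\emph{First}, the relations are not of the type $b_{k+1}=tb_k$. The positions $p\le q$ give $b_{2k-1}=b_k$ and $b_{2k}=tb_k$. Hence $b_k=t^{w(k-1)}b_1$, where $w$ is the binary digit sum (e.g.\ $b_3=b_2=tb_1$), not $t^{k-1}b_1$.

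\emph{Second}, $\pi$ is in general not a single cycle on the interior positions. In $0$-based indexing its cycles are the orbits of multiplication by $q$ (equivalently by $2$) modulo $2q-1$. Primality of $q$ does not control these, since $2q-1$ may be composite and $2$ need not be a primitive root. For $q=11$ the cycle through position $2$ consists of the entries $a_1b_2,a_1b_3,a_1b_5,a_1b_9,a_2b_6,a_2b_1$. Equality along this cycle imposes no condition on $t$ at all: take $b_1=b_6=1$ and $b_2=b_3=b_5=b_9=t$. Conditions on $t$ only arise when different cycles are linked through the two occurrences $a_1b_k$ and $a_2b_k$ of each $b_k$. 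Moreover, a single closing condition $t^r=1$ can never force $t=1$ over a field with nontrivial $r$-th roots of unity. For example, for $q=7$, position $q+1$ together with the first-half relations yields only $t^2=1$.

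To close the gap, use all first-half relations to get $b_k=t^{w(k-1)}b_1$ for every $k$, so $b_1\neq 0$. Then the entry of $\mathbf{b}\otimes\mathbf{a}$ at $0$-based position $r=2m+\epsilon$ is $a_1b_1t^{w(m)+\epsilon}=a_1b_1t^{w(r)}$. The entry of $\mathbf{a}\otimes\mathbf{b}$ at $r=q+s$ ($0\le s\le q-1$) is $a_1b_1t^{1+w(s)}$. So the first half is automatic, and the second half is equivalent to $t^{1+w(s)}=t^{w(q+s)}$ for $0\le s\le q-1$. Taking $s=0$ gives $t^{w(q)-1}=1$. Taking $s=2^K$ with $2^K<q<2^{K+1}$ gives $w(q+2^K)=w(q)$, hence $t^{w(q)-2}=1$. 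Together these force $t=1$, i.e.\ $a_1=a_2$ and $\mathbf{b}=b_1(1,\ldots,1)^T$. This uses only that $q$ is odd; primality plays no further role.

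Do not try to patch your argument by copying the paper's arrow chase in (iii), which has the same blind spot. For $q=11$, following the arrows from $a_1b_{11}$ gives $a_1b_{11}=a_2b_{10}=a_2b_9=a_2b_7=a_2b_3=a_1b_6=a_1b_{11}$ and never reaches $a_2b_1$.
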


\begin{proof}
 \begin{enumerate}[label=(\alph*)]
  \item
   If $q=2$, then $\mathbf{a}\otimes\mathbf{b}=\mathbf{b}\otimes\mathbf{a}$ if and only if
   \begin{equation*}
    \begin{pmatrix}
     a_1\mathbf{b} \\
     \vdots        \\
     a_p\mathbf{b}
    \end{pmatrix}
    =
    \begin{pmatrix}
     b_1\mathbf{a} \\
     \vdots        \\
     b_p\mathbf{a}
    \end{pmatrix}.
   \end{equation*}
   Since $\mathbf{b}$ has non-zero entries, let $b_j\neq 0$.
   Then $\mathbf{a}=\beta\mathbf{b}$ where $\beta=a_j/b_j$.
  \item
   Let $k\in{1,\ldots,2q}$. Then there exist unique $Q_{q,k},R_{2,k}\in\{1,2\}$
   and $Q_{2,k},R_{q,k}\in\{1,\ldots,q\}$ such that
   \begin{equation*}
    k-1 = (Q_{q,k}-1)q+(R_{q,k}-1) = (Q_{2,k}-1)2+(R_{2,k}-1).
   \end{equation*}
   The entries of $\mathbf{a}$ and $\mathbf{b}$ satisfy
   \begin{equation*}
    (\mathbf{a}\otimes\mathbf{b})_k
      = a_{Q_{q,k}}b_{R_{q,k}}
      = a_{R_{2,k}}b_{Q_{2,k}}
      = (\mathbf{b}\otimes\mathbf{a})_k.
   \end{equation*}
   Given $k\in\{1,\ldots,2q\}$, let $k'$ be defined by
   \begin{equation*}
    k'-1 = (R_{q,k}-1)2+(Q_{q,k}-1).
   \end{equation*}
   Then
   \begin{equation*}
    Q_{q,k}=R_{2,k'},\quad R_{q,k}=Q_{2,k'}.
   \end{equation*}
   It follows that
   \begin{equation}
     \label{eq:chains}
     a_{Q_{q,k'}}b_{R_{q,k'}}
     = a_{R_{2,k'}}b_{Q_{2,k'}}
     = a_{Q_{q,k}}b_{R_{q,k}}
     = a_{R_{2,k}}b_{Q_{2,k}}.
   \end{equation}
   Define an equivalence relation $\sim$ on $\{1,\ldots,2q\}$ generated by
   \begin{equation*}
    k\sim k' \qquad\text{if}\qquad Q_{q,k}=R_{2,k'}\text{~~and~~}R_{q,k}=Q_{2,k'}.
   \end{equation*}
   Furthermore, let $n:\{1,\ldots,2q\}\to\{1,\ldots,2q\}$ be the bijection given by
   \begin{equation*}
    n(k) := (R_{q,k}-1)2+Q_{q,k}.
   \end{equation*}
   Hence, $k\sim n(k)$ and $k\sim k'$ if and only if there exists $j\in\mathbb{N}$ such that $k'=n^j(k)$.
   In particular, $n(1)=1$ and $n(2q)=2q$.
   Let $[k]_\sim$ denote an equivalence class.
   It follows by equation \eqref{eq:chains} that
   $(\mathbf{a}\otimes\mathbf{b})_k = (\mathbf{a}\otimes\mathbf{b})_{k'}$
   for all $k'\in [k]_\sim$. Now suppose $(\mathbf{a}\otimes\mathbf{b})_k\neq 0$. Then
   $a_{Q_{q,k'}},a_{R_{2,k'}},b_{Q_{2,k'}},b_{R_{q,k'}}\neq 0$ for all $k'\in [k]_\sim$ and
   by equation \eqref{eq:chains},
   \begin{equation*}
    \dfrac{a_{R_{2,k'}}}{a_{R_{2,k}}}=\dfrac{b_{Q_{2,k}}}{b_{Q_{2,k'}}},\qquad
    \dfrac{a_{Q_{q,k'}}}{a_{Q_{q,k}}}=\dfrac{b_{R_{q,k}}}{b_{R_{q,k'}}}.
   \end{equation*}
   We have three cases: $a_1,a_2\neq 0$, $a_1\neq0$, $a_2=0$ and $a_1=0$, $a_2\neq 0$.
   Hence,
   \begin{enumerate}[label=(\roman*)]
    \item $a_1\neq0, a_2=0$:
     Since $\mathbf{a}\otimes\mathbf{b}=\mathbf{b}\otimes\mathbf{a}$, \\

     \begin{math}
      \qquad a_1(b_1,b_2,\ldots,b_q,0,\ldots,0,0) = a_1(b_1,0,\ldots,b_{\frac{q+1}{2}},0,\ldots,b_q,0).
     \end{math} \\

     Let $k\in\{2,\ldots,q\}$. Then $Q_{q,k}=1$ and $R_{q,k}=k$ and $n(k)=2(k-1)+1$. If $n(k)>q$,
     then $a_1b_k=(\mathbf{a}\otimes\mathbf{b})_{n(k)}=0$. In general, if $n^{j-1}(k)\leq q$,
     then $n^j(k)=2^j(k-1)+1$. There exists a $J\in\mathbb{N}$ such that $n^J(k)>q$,
     and hence $a_1b_{k}=(\mathbf{a}\otimes\mathbf{b})_{n^J(k)}=0$. It follows that
     \begin{equation*}
      \mathbf{a}=a_1(1,0)^T,\qquad
      \mathbf{b}=b_1(1,0,\ldots,0)^T.
     \end{equation*}
     Taking $\beta=b_1/a_1$ yields form (i).
    \item $a_1=0,a_2\neq 0$:
     We have \\

     \begin{math}
      \qquad a_2(0,0,\ldots,0,b_1,\ldots,b_{q-1},b_q) = a_2(0,b_1,\ldots,0,b_{\frac{q+1}{2}},\ldots,0,b_q).
     \end{math}\\

     Let $k\in\{1,\ldots,q-1\}$. Then $Q_{q,q+k}=2$ and $R_{q,q+k}=k$ and $n(q+k)=2k$.
     If $n(q+k)\leq q$, then $a_2b_k=(\mathbf{a}\otimes\mathbf{b})_{n(q+k)}=0$. In general,
     if $n^{j-1}(q+k)>q$, then $n^j(q+k)=2^jk$. Since the range of $n$ is bounded by $2q$, there
     exists a $J\in\mathbb{N}$ such that $n^J(q+k)\leq q$, and hence
     $a_2b_{k}=(\mathbf{a}\otimes\mathbf{b})_{n^J(q+k)}=0$. It follows that
     \begin{equation*}
      \mathbf{a}=a_2(0,1)^T,\qquad
      \mathbf{b}=b_q(0,\ldots,0,1)^T.
     \end{equation*}
     Taking $\beta=b_q/a_2$ yields form (ii).
    \item $a_1,a_2\neq 0$:
     We have \\
%     \begin{math}
%       (a_1b_1,a_1b_2,a_1b_3,\ldots,a_1b_q,a_2b_1,\ldots,a_2b_{q-2},a_2b_{q-1},a_2b_q) \\
%              = (a_1b_1,a_2b_1,a_1b_2,\ldots,a_1b_{\frac{q+1}2},a_2b_{\frac{q+1}2},\ldots,a_2b_{q-1},a_1b_q,a_2b_q).
%     \end{math} \\
     \hspace*{-1cm}
     \begin{tikzpicture}[anchor=base]
       \matrix[column sep=0.4pt,row sep=1em]
       {
         %\node{}; &
         \node(la1b1){$\bigl(a_1b_1,$}; & 
         \node(la1b2){$a_1b_2,$}; & 
         \node(la1b3){$a_1b_3,$}; & 
         \node(ldots1){$\ldots\rlap{,}$}; &
         \node(la1bq){$a_1b_q,$}; &
         \node(la2b1){$a_2b_1,$}; &
         \node(ldots2){$\ldots\rlap{,}$}; &
         \node(la2bq-2){$a_2b_{q-2},$}; &
         \node(la2bq-1){$a_2b_{q-1},$}; &
         \node(la2bq){$a_2b_q\bigr)$}; \\
         %\node{}; &
         %\node{$=$}; &
         \node(ra1b1){$=\bigl(a_1b_1,$}; & 
         \node(ra2b1){$a_2b_1,$}; & 
         \node(ra1b2){$a_1b_2,$}; & 
         \node(rdots1){$\ldots\rlap{,}$}; &
         \node(ra1bq+1/2){$a_1b_{\frac{q+1}{2}},$}; &
         \node(ra2bq+1/2){$a_2b_{\frac{q+1}{2}},$}; &
         \node(rdots2){$\ldots\rlap{,}$}; &
         \node(ra2bq-1){$a_2b_{q-1},$}; &
         \node(ra1bq){$a_1b_q,$}; &
         \node(ra2bq){$a_2b_q\bigr)$}; \\
       };
       \begin{scope}[->,>=stealth]
         \draw (ra1bq) -- (la2bq-1);
         \draw (la2bq-1) -- (ra2bq-1);
         \draw (ra2bq-1) -- (la2bq-2);
         \draw (ra2bq+1/2) -- (la2b1);
       \end{scope}
       \coordinate (m2) at ($(ldots2)!2/3!(rdots2)$);
       \draw[densely dotted,->,>=stealth] (la2bq-2) -- (m2);
     
       \begin{scope}[->,>=stealth,densely dashed]
         \draw (ra2b1) -- (la1b2);
         \draw (la1b2) -- (ra1b2);
         \draw (ra1b2) -- (la1b3);
         \draw (ra1bq+1/2) -- (la1bq);
       \end{scope}
       \coordinate (m1) at ($(ldots1)!2/3!(rdots1)$);
       \draw[densely dotted,->,>=stealth] (la1b3) -- (m1);
       %\draw (la2b1) .. controls ($(la2b1)!1/2!(ra2b1)$) .. (ra2b1);
     \end{tikzpicture}\\
     and chasing the arrows yields the sequence of equalities
     \begin{align*}
      a_1b_q=a_2b_{q-1}=a_2b_{q-2}=\cdots=a_2b_{\frac{q+1}2}&=a_2b_1 \\
            &=a_2b_1=a_1b_2=a_1b_3=\cdots=a_1b_{\frac{q+1}2}.
     \end{align*}
     It follows that
     $\mathbf{a}\otimes\mathbf{b}
      =(\alpha,\beta,\beta,\ldots,\beta,\beta,\gamma)$
     for some $\alpha,\beta,\gamma\in\mathbb{F}$.
     If $\beta=0$, then $\alpha=0$ or $\gamma=0$ (i.e. to yield a Kronecker product $\mathbf{a}\otimes\mathbf{b}$ of non-zero vectors). Either case contradicts $a_1,a_2\neq 0$.
     Hence $\beta\neq 0$.
     Since $a_1\mathbf{b}=(\alpha,\beta,\ldots,\beta)$ and $a_2\mathbf{b}=(\beta,\ldots,\beta,\gamma)$
     are linearly dependent, $\alpha=\gamma=\beta$.
     Thus we conclude that\\
     \strut\qquad $\mathbf{a}\otimes\mathbf{b} =
       (\beta,\ldots,\beta)
        =(1,1)\otimes \beta(1,\ldots,1).$ \qedhere
   \end{enumerate}
 \end{enumerate}
\end{proof}

\begin{lemma}
 Let $A\in\mathbb{F}_2$ and $B\in\mathbb{F}_q$, where $q$ is a prime number,
 with $\tr(A)=\tr(B)=1$. Let ${J}_{q}$ be the $q\times q$ matrix with every entry equal to 1.
 Matrices with entries 0 or 1 will be assumed to have all necessary entries equal to zero or one
 for the statement to make sense, and in each case the form below is unique. Then $A\otimes B=B\otimes A$ if and only if
 \begin{enumerate}[label=(\alph*)]
  \item $q=2$ and $A=B$,
  \item $q\neq 2$ and $A$ and $B$ have one of the forms (depending on the \\ characteristic of $\mathbb{F}$) \\ %\\[-1.5mm]
  
  \noindent\!\!\!\!\!\!
        \begin{tabular}{@{}r@{\,\,}lr@{\,\,}l@{}}
         (i) & $A=\dfrac{1}{2}I_2$, 
               $B=\dfrac{1}{q}I_q$, &
         (ii) & $A=\dfrac{1}{2}J_2$, 
               $B=\dfrac{1}{q}J_{q}$. \\[5mm]
         (iii) & $A=E_{11}$,
               $B=E_{11}$, &
         (iv) & $A=E_{22}$,
               $B=E_{qq}$, \\[3mm]
         (v) & $A=\begin{pmatrix}1&1\\0&0\end{pmatrix}$, 
               $B=\begin{pmatrix}1&1\\0&0_{q-1}\end{pmatrix}$, &
         (vi) & $A=\begin{pmatrix}0&0\\1&1\end{pmatrix}$, 
               $B=\begin{pmatrix}0_{q-1}&0\\1&1\end{pmatrix}$, \\[4mm]
         (vii) & $A=\begin{pmatrix}1&0\\1&0\end{pmatrix}$, 
               $B=\begin{pmatrix}1&0\\1&0_{q-1}\end{pmatrix}$, &
         (viii) & $A=\begin{pmatrix}0&1\\0&1\end{pmatrix}$, 
               $B=\begin{pmatrix}0_{q-1}&1\\0&1\end{pmatrix}$.
        \end{tabular}
%        \begin{enumerate}[label=(\roman*)]
%         \item $A=\dfrac{1}{2}I_2$, 
%               $B=\dfrac{1}{q}I_q$, \\[1mm]
%         \item $A=\dfrac{1}{2}J_2$, 
%               $B=\dfrac{1}{q}J_{q}$. \\[1mm]
%         \item $A=E_{11}$,
%               $B=E_{11}$, \\[1mm]
%         \item $A=E_{22}$,
%               $B=E_{qq}$, \\[1mm]
%         \item $A=\begin{pmatrix}1&1\\0&0\end{pmatrix}$, 
%               $B=\begin{pmatrix}1&1\\0&0_{q-1}\end{pmatrix}$, \\[2mm]
%         \item $A=\begin{pmatrix}0&0\\1&1\end{pmatrix}$, 
%               $B=\begin{pmatrix}0_{q-1}&0\\1&1\end{pmatrix}$, \\[2mm]
%         \item $A=\begin{pmatrix}1&0\\1&0\end{pmatrix}$, 
%               $B=\begin{pmatrix}1&0\\1&0_{q-1}\end{pmatrix}$, \\[2mm]
%         \item $A=\begin{pmatrix}0&1\\0&1\end{pmatrix}$, 
%               $B=\begin{pmatrix}0_{q-1}&1\\0&1\end{pmatrix}$.
%        \end{enumerate}
 \end{enumerate}
\end{lemma}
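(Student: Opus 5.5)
Under the row-major vectorization the paper uses, $\vc(A\otimes B)=\vc(A)\otimes\vc(B)$, and $A\otimes B=B\otimes A$ holds if and only if $\vc(A)\otimes\vc(B)=\vc(B)\otimes\vc(A)$. So the plan is to reduce the statement to Lemma \ref{lem:commvec} applied to the columns of the matrices. The cleanest route goes through the block structure. Write $A=\sum_{k,l}(A)_{kl}E_{kl}$, so $A\otimes B=\sum_{k,l}E_{kl}\otimes (A)_{kl}B$. Conjugating by the vec permutation $\sigma_{2,q}$ shows that $A\otimes B=B\otimes A$ is equivalent to the entrywise identity $(A)_{i,j}(B)_{p,r}=(B)_{i',j'}(A)_{p',r'}$, where the pairs $(i,p)$ and $(i',p')$ are related exactly as the indices $k,n(k)$ in the proof of Lemma \ref{lem:commvec}, and likewise for the columns $(j,r)$ and $(j',r')$. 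Equivalently, for every column index pair $(j,r)$ the column vectors satisfy $\mathbf a_j\otimes\mathbf b_r=\mathbf b_{r'}\otimes\mathbf a_{j'}$ after the column reindexing. Rather than untangle this directly, I will instead apply Lemma \ref{lem:commvec} to the full vectorizations. The vectors $\vc(A)\in\mathbb F^4$ and $\vc(B)\in\mathbb F^{q^2}$ do not fit its hypotheses directly, since $q^2$ is not prime, so I will apply the lemma row-by-row and column-by-column.

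Concretely, the rank-one structure does the work. For a fixed row index pair, the rows satisfy $\mathbf a^{(i)}\otimes\mathbf b^{(p)}$ paired with $\mathbf b^{(i')}\otimes\mathbf a^{(p')}$, and similarly for columns. The key reduction is the following. Let $\mathbf u=(1,0)$, $\mathbf w=(0,1)$ or $(1,1)$. If $A\otimes B=B\otimes A$, then every nonzero column $\mathbf c$ of $A$ and nonzero column $\mathbf d$ of $B$ satisfy the hypotheses of Lemma \ref{lem:commvec}, i.e.\ $\mathbf c\otimes\mathbf d=\mathbf d\otimes\mathbf c$. I expect this to follow from the column-reindexing bijection fixing the first and last columns, combined with the chain argument of Lemma \ref{lem:commvec}. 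The same holds for rows. By Lemma \ref{lem:commvec}, when $q\ne2$ every nonzero column of $A$ is then a multiple of one of $e_1$, $e_2$, $(1,1)^T$, with the corresponding column of $B$ of the matching shape, and likewise for rows. Combining the row and column constraints leaves the candidate patterns: diagonal $A$ and $B$ (each row/column supported at the first or last position), all-ones patterns, and the mixed one-row or one-column patterns (v)--(viii). The trace condition $\tr(A)=\tr(B)=1$ then fixes the scalars. In the diagonal case the chain argument forces either $A=\frac12 I_2$, $B=\frac1q I_q$ (all diagonal entries linked, which needs $2,q$ invertible in $\mathbb F$, whence the characteristic dependence), or the support is only the first entry, giving (iii), or only the last entry, giving (iv). The $J$ case gives (ii), and the mixed cases give (v)--(viii). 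For $q=2$, case (a) of Lemma \ref{lem:commvec} yields $\vc(A)=\beta\vc(B)$ directly, since here we may apply it with $2^2=4$ treated as in part (a) via the same block argument; the trace condition then gives $\beta=1$, so $A=B$. The converse direction in every case is a direct verification of $A\otimes B=B\otimes A$.

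The main obstacle is the first reduction: justifying that commutation of the Kronecker product of matrices forces pairwise commutation of corresponding nonzero columns and rows, given that the index permutation mixes columns of $A$ with columns of $B$. I would first try to prove it by writing $A\otimes B=B\otimes A$ as $\vc(A)\otimes\vc(B)$ versus $\vc(B)\otimes\vc(A)$ under a $2\cdot q$ interleaving, and rerunning the orbit-of-$n$ argument of Lemma \ref{lem:commvec} on the coupled index set. Uniqueness of each form, as claimed in the statement, then follows by comparing supports.
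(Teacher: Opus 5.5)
\paragraph{There is a genuine gap: your key reduction is false.} Case (i) of the lemma itself refutes it. Take $q\geq 3$, $A=\frac12 I_2$ and $B=\frac1q I_q$, so that $A\otimes B=B\otimes A=\frac1{2q}I_{2q}$. The first column $\frac12\mathbf{e}_1$ of $A$ and the second column $\frac1q\mathbf{e}_2$ of $B$ give $\frac12\mathbf{e}_1\otimes\frac1q\mathbf{e}_2=\frac1{2q}\mathbf{e}_2\neq\frac1{2q}\mathbf{e}_3=\frac1q\mathbf{e}_2\otimes\frac12\mathbf{e}_1$ in $\mathbb{F}^{2q}$. Suppose every nonzero column of $B$ really had to be a multiple of $\mathbf{e}_1$, $\mathbf{e}_q$ or $(1,\ldots,1)^T$, as you conclude from Lemma \ref{lem:commvec}. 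Then $B=\frac1qI_q$ would be excluded, so your argument could never produce form (i).

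The column identities you wrote down, $\mathbf{a}_j\otimes\mathbf{b}_r=\mathbf{b}_{r'}\otimes\mathbf{a}_{j'}$, are true commutation relations only at the two fixed points of the shuffle: $\mathbf{a}_1\otimes\mathbf{b}_1=\mathbf{b}_1\otimes\mathbf{a}_1$ and $\mathbf{a}_2\otimes\mathbf{b}_q=\mathbf{b}_q\otimes\mathbf{a}_2$. Everywhere else they couple different columns, for example $\mathbf{a}_1\otimes\mathbf{b}_{2j}=\mathbf{b}_j\otimes\mathbf{a}_2$ and $\mathbf{a}_1\otimes\mathbf{b}_{2j-1}=\mathbf{b}_j\otimes\mathbf{a}_1$. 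No orbit or chain argument turns these into pairwise commutation.

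Your opening claim is also wrong. The vectors $\vc(A\otimes B)$ and $\vc(A)\otimes\vc(B)$ differ by a permutation that depends on the sizes of $A$ and $B$. Hence for $q\neq 2$, the condition $A\otimes B=B\otimes A$ is not equivalent to $\vc(A)\otimes\vc(B)=\vc(B)\otimes\vc(A)$; case (i) again satisfies the first but not the second. For $q=2$ the two permutations coincide, so your case (a) survives, though the direct block comparison $(A)_{i,j}B=(B)_{i,j}A$ is simpler.

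\paragraph{What you can legitimately extract, and what is missing.} Lemma \ref{lem:commvec} applies to exactly the vectors the paper uses:
\begin{itemize}
\item the diagonals, since $\diag(A\otimes B)=\diag(A)\otimes\diag(B)$;
\item the counter-diagonals, by the analogous identity for $\cdiag$;
\item the first and last rows and columns, which are the fixed points of the shuffle.
\end{itemize}
Each of these vectors of $A$ and $B$ is therefore zero or one of the forms in Lemma \ref{lem:commvec}. Together with $\tr(A)=1$, this leaves finitely many candidates for $A$.

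One then splits by $\rank(A)$ and rules out candidates whose induced constraints on the diagonal, counter-diagonal and first row of $B$ are incompatible with $\tr(B)=1$. For each surviving candidate, you propagate through the coupled column and row equations to determine $B$ column by column:
\begin{itemize}
\item For $A=\frac12I_2$, the constraints give $\mathbf{b}_1=\frac1q\mathbf{e}_1$. Then $\mathbf{a}_1\otimes\mathbf{b}_2=\mathbf{b}_1\otimes\mathbf{a}_2$ forces $\mathbf{b}_2=\frac1q\mathbf{e}_2$, and $\mathbf{a}_1\otimes\mathbf{b}_3=\mathbf{b}_2\otimes\mathbf{a}_1$ forces $\mathbf{b}_3=\frac1q\mathbf{e}_3$, and so on.
\item For $A=E_{11}$, the same equations force $\mathbf{b}_j=\mathbf{0}$ for $j\geq 2$.
\end{itemize}
This case-by-case propagation through the non-commuting coupled equations is the ingredient your proposal is missing.
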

\begin{proof}
 \begin{enumerate}[label=(\alph*)]
  \item
   If $q=2$, then $A\otimes B=B\otimes A$ if and only if
   \begin{equation*}
    \begin{pmatrix}
     (A)_{1,1}B & (A)_{1,2}B \\
     (A)_{2,1}B & (A)_{2,2}B
    \end{pmatrix}
    =
    \begin{pmatrix}
     (B)_{1,1}A & (B)_{1,2}A \\
     (B)_{2,1}A & (B)_{2,2}A
    \end{pmatrix}.
   \end{equation*}
   Since $B$ has non-zero entries ($\tr(B)=1$), suppose $(B)_{i,j}\neq 0$.
   Then $A=\alpha B$ where $\alpha=(A)_{i,j}/(B)_{i,j}$.
   Since $1=\tr(A)=\alpha\tr(B)=\alpha$, we have $A=B$.
  \item
   We note that (vii) and (viii) follow by transposition of (v) and (vi) and the fact that $A\otimes B=B\otimes A$ holds if and only if $A^T\otimes B^T=B^T\otimes A^T$.
   Let $\diag(X)$ denote the vector of main diagonal entries of $X$. Then
   \begin{align*}
    \SwapAboveDisplaySkip
    \diag(A)\otimes\diag(B)
      &= \diag(A\otimes B) \\
      &= \diag(B\otimes A) = \diag(B)\otimes\diag(A).
   \end{align*}
   Let $\cdiag(X)$ denote the vector of main counter diagonal entries of matrix $X$. Then, similarly,
   \begin{align*}
    \SwapAboveDisplaySkip
    \cdiag(A)\otimes\cdiag(B)
      %&= \cdiag(A\otimes B) \\
      %&= \cdiag(B\otimes A)
        = \cdiag(B)\otimes\cdiag(A).
   \end{align*}
   Let $\mathbf{a}_1$, \ldots, $\mathbf{a}_2$ be the column vectors of $A$ and let $\mathbf{b}_1$, \ldots, $\mathbf{b}_q$ be the column vectors of $B$. Then
   \begin{align*}
    \lefteqn{
    \begin{pmatrix}
     \mathbf{a}_1\otimes\mathbf{b}_1 & \mathbf{a}_1\otimes\mathbf{b}_2 & \cdots & \mathbf{a}_1\otimes\mathbf{b}_q
     & \mathbf{a}_2\otimes\mathbf{b_1} & \cdots & \mathbf{a}_2\otimes\mathbf{b}_{q-1} & \mathbf{a}_2\otimes\mathbf{b}_q
    \end{pmatrix}}
    \qquad & \\
    &= A\otimes B = B\otimes A  \\
    & =
    \begin{pmatrix}
     \mathbf{b}_1\otimes\mathbf{a}_1 & \mathbf{b}_1\otimes\mathbf{a}_2 & \cdots
     & \mathbf{b}_q\otimes\mathbf{a}_1 & \mathbf{b}_q\otimes\mathbf{a}_2
    \end{pmatrix},\qquad\qquad
   \end{align*}
   or equivalently,
   \begin{equation}
    \label{eq:kroncomm}
    \begin{aligned}
    & \mathbf{a}_1\otimes\mathbf{b}_{2j} = \mathbf{b}_j\otimes \mathbf{a}_2
    && j\in\left\{1,\ldots,\frac{q-1}{2}\right\}, \\
    & \mathbf{a}_1\otimes\mathbf{b}_{2j-1} = \mathbf{b}_{j}\otimes\mathbf{a}_1
    && j\in\left\{2,\ldots,\frac{q+1}{2}\right\}
    \end{aligned}
   \end{equation}
   and similarly for the row vectors.
   Hence, the main counter diagonal, the main diagonal, the first and last column vectors and row vectors of $A$ must each be one of the forms
   from Lemma \ref{lem:commvec}, or zero. Since $\tr(A)=1$, $A$ will have
   one of the forms
   \begin{equation*}
    \begin{pmatrix} 1&0\\ 0&0 \end{pmatrix},
    \begin{pmatrix} 0&0\\ 0&1 \end{pmatrix},
    \begin{pmatrix} 1&1\\ 0&0 \end{pmatrix},
    \begin{pmatrix} 0&1\\ 0&1 \end{pmatrix},
    \begin{pmatrix} 1&0\\ 1&0 \end{pmatrix},
    \begin{pmatrix} 0&0\\ 1&1 \end{pmatrix},
   \end{equation*}
   \begin{equation*}
    \begin{pmatrix} 1&1\\ 1&0 \end{pmatrix},
    \begin{pmatrix} 0&1\\ 1&1 \end{pmatrix},
    \frac12\begin{pmatrix} 1&0\\ 0&1 \end{pmatrix},
    \frac12\begin{pmatrix} 1&0\\ 1&1 \end{pmatrix},
    \frac12\begin{pmatrix} 1&1\\ 0&1 \end{pmatrix},
    \frac12\begin{pmatrix} 1&1\\ 1&1 \end{pmatrix}.
   \end{equation*}
   We have that $\rank(A)\in\{1,2\}$.
   Suppose $\rank(A)=2$, then $A$ has one of the forms
   \begin{equation*}
    \begin{pmatrix} 1&1\\ 1&0 \end{pmatrix},
    \begin{pmatrix} 0&1\\ 1&1 \end{pmatrix},
    \frac12\begin{pmatrix} 1&0\\ 0&1 \end{pmatrix},
    \frac12\begin{pmatrix} 1&0\\ 1&1 \end{pmatrix},
    \frac12\begin{pmatrix} 1&1\\ 0&1 \end{pmatrix}.
   \end{equation*}
   Consider, for example, if $A=\begin{pmatrix} 1&1\\ 1&0 \end{pmatrix}$ then it can be easily
   shown that $B$ must have the form
   \begin{equation*}
    B=\beta\begin{pmatrix}
        1 & 1 & \cdots & 1 & 1 & 1 \\
        1 & * & \cdots & * & 1 & 0 \\
        \vdots & \vdots & \iddots & \iddots & * & 0 \\
        1 & * & \iddots & \iddots & \vdots & \vdots \\
        1 & 1 & * & \cdots & * & 0 \\
        1 & 0 & 0 & \cdots & 0 & 0
       \end{pmatrix}
   \end{equation*}
   (where $*$ denotes an arbitrary value, possibly different for each entry)
   and since the main counter-diagonal of $B$ must consist
   only of 1s by Lemma \ref{lem:commvec}, but the main diagonal
   must be of the form $(1,0,\ldots,0)$ (also by Lemma \ref{lem:commvec}) which shares exactly one entry with the main counter-diagonal ($q>2$ is prime),
%   and comparing the $q$-th component of $\mathbf{a}_1\otimes\mathbf{b}_2$ and $\mathbf{b}_1\otimes\mathbf{a}_2$
%   yields $0\neq\beta$
   no $B$ can be found with $A\otimes B=B\otimes A$ and $\tr(B)=1$.
   All of the remaining examples similarly yield contradictions, except for $A=\frac12I_2$. If $A=\frac12I_2$, then the main diagonal entries of $B$ consists only
   of the single number $\beta$ which yields that $1=\tr(B)=\beta q$, together with
   the forms from Lemma \ref{lem:commvec} for the first and last rows and columns we obtain the form:
   \begin{equation*}
    B=\frac1q\begin{pmatrix}
        1 & 0 & \cdots & 0 & 0 & 0 \\
        0 & 1 & \cdots & * & * & 0 \\
        \vdots & \vdots & \ddots & \iddots & * & 0 \\
        0 & * & \iddots & \ddots & \vdots & \vdots \\
        0 & * & * & \cdots & 1 & 0 \\
        0 & 0 & 0 & \cdots & 0 & 1
       \end{pmatrix}
   \end{equation*}
   %and $B=\beta I_q$, which yields (i).
   Since
   \begin{equation*}
    \frac1{2}
    \begin{pmatrix} \mathbf{b}_2 \\ \mathbf{0} \end{pmatrix}
    =
    \mathbf{a}_1\otimes\mathbf{b}_2
    =
    \mathbf{b}_1\otimes\mathbf{a}_2
    =
    \frac1{q}
    \begin{pmatrix} \mathbf{a}_2 \\ \mathbf{0} \\ \vdots \\ \mathbf{0} \end{pmatrix}
   \end{equation*}
   we obtain $\mathbf{b}_2=\frac1q(0,1,0,\ldots,0)^T$ and, similarly, $\mathbf{b}_{q-1}=\frac1q(0,\ldots,0,1,0)^T$.
   The same holds true for the second and $(q-1)$-th row vectors of $B$.
   Now
   \begin{equation*}
    \mathbf{a}_1\otimes\mathbf{b}_3
    =
    \mathbf{b}_2\otimes\mathbf{a}_1
    =
    \frac1q
    \begin{pmatrix} \mathbf{0} \\ \mathbf{a}_1 \\ \mathbf{0} \\ \vdots \\ \mathbf{0} \end{pmatrix}
    =
    \frac1q
    \begin{pmatrix} 0 \\ 0 \\ 1 \\ 0 \\ \vdots \\ 0 \end{pmatrix}
   \end{equation*}
   which shows that $\mathbf{b}_3=\frac1q(0,0,1,0,\ldots,0)^T$. A similar argument holds for $\mathbf{b}_{q-2}$ and continuing in the same manner, equation \eqref{eq:kroncomm} finally yields that $B=\frac1qI_q$.
   If $\rank(A)=1$, then $A$ has one of the forms
   \begin{equation*}
    \begin{pmatrix} 1&0\\ 0&0 \end{pmatrix},
    \begin{pmatrix} 0&0\\ 0&1 \end{pmatrix},
    \begin{pmatrix} 1&1\\ 0&0 \end{pmatrix},
    \begin{pmatrix} 1&0\\ 1&0 \end{pmatrix},
    \begin{pmatrix} 0&0\\ 1&1 \end{pmatrix},
    \begin{pmatrix} 0&1\\ 0&1 \end{pmatrix},
    \frac12\begin{pmatrix} 1&1\\ 1&1 \end{pmatrix}.
   \end{equation*}
   For example, if $A=\begin{pmatrix} 1&0\\ 0&0 \end{pmatrix}$ then $B$ must have the form
   \begin{equation*}
    B = \beta
    \begin{pmatrix}
     1 & 0 & 0 & \cdots & \cdots & 0 \\
     0 & 0 & * & \cdots & \cdots & * \\
     0 & * & \ddots & \ddots & \cdots & * \\
     \vdots & \vdots & \ddots & \ddots & \ddots & \vdots \\
     0 & * & \cdots & \ddots & \ddots & * \\
     0 & * & \cdots & \cdots & * & 0
    \end{pmatrix}.
   \end{equation*}
   It follows that since $\mathbf{a}_1\neq\mathbf{0}$ and $\mathbf{a}_2=\mathbf{0}$,
   $\mathbf{a}_1\otimes\mathbf{b}_2=\mathbf{b}_1\otimes\mathbf{a}_2$
   if and only if $\mathbf{b}_2=\mathbf{0}$. The equations and their implications are illustrated in the following diagram: \\
   \hspace*{-4mm}
   \begin{tikzpicture}[anchor=base]
     \matrix[column sep=0.4pt,row sep=1em]
     {
       %\node{}; &
       \node[inner sep=2pt](la1b1){$\bigl(\mathbf{a}_1\otimes \mathbf{b}_1,$}; & 
       \node(la1b2){$\mathbf{a}_1\otimes \mathbf{b}_2,$}; & 
       \node(la1b3){$\mathbf{a}_1\otimes \mathbf{b}_3,$}; & 
       \node(ldots1){$\ldots\rlap{,}$}; &
       \node(la1bq){$\mathbf{a}_1\otimes \mathbf{b}_q,$}; &
       \node(la2b1){$\mathbf{a}_2\otimes \mathbf{b}_1,$}; &
       \node(ldots2){$\ldots\rlap{,}$}; &
       \node(la2bq-1){$\mathbf{a}_2\otimes \mathbf{b}_{q-1},$}; &
       \node(la2bq){$\mathbf{a}_2\otimes \mathbf{b}_q\bigr)$}; \\
       %\node{}; &
       %\node{$=$}; &
       \node(ra1b1)[inner sep=2pt]{$\llap{=}\bigl(\mathbf{b}_1\otimes \mathbf{a}_1,$}; & 
       \node(ra2b1){$\mathbf{b}_1\otimes \mathbf{a}_2,$}; & 
       \node(ra1b2){$\mathbf{b}_2\otimes \mathbf{a}_1,$}; & 
       \node(rdots1){$\ldots\rlap{,}$}; &
       \node(ra1bq+1/2){$\mathbf{b}_{\frac{q+1}{2}}\otimes \mathbf{a}_1,$}; &
       \node(ra2bq+1/2){$\mathbf{b}_{\frac{q+1}{2}}\otimes \mathbf{a}_2,$}; &
       \node(rdots2){$\ldots\rlap{,}$}; &
       \node(ra1bq){$\mathbf{b}_q\otimes \mathbf{a}_1,$}; &
       \node(ra2bq){$\mathbf{b}_q\otimes \mathbf{a}_2\bigr)$}; \\
       
       \node(c1){};&
       \node(c2){$\mathbf{b}_2=\mathbf0$};&
       \node(c3){$\mathbf{b}_3=\mathbf0$};&
            &
       \node(c4){};&
       \node(c5){};&
       \node(c6){};&
       \node(c7){$\mathbf{b}_q=\mathbf0$};&
       \node(c8){};\\
     };
     \begin{scope}[<->,>=stealth]
       \draw (ra1bq) -- (la2bq-1);
     \end{scope}
     \coordinate (m2) at ($(ldots2)!2/3!(rdots2)$);
   %  \draw[densely dotted,<->,>=stealth] (la2bq-2) -- (m2);
   
     \begin{scope}[<->,>=stealth]
     \draw(la1b1) -- (ra1b1);
       \draw (ra2b1) -- (la1b2);
       \draw (la1b2) -- (ra1b2);
       \draw (ra1b2) -- (la1b3);
       \draw (ra1bq+1/2) -- (la1bq);
     \end{scope}
     
       \begin{scope}[double equal sign distance]
        \draw[double,-{Implies[]}] (ra2b1) -- (c2);
        \draw[double,-{Implies[]}] (ra1b2) -- (c3);
   %     \draw[double,-{Implies[]}] (ra1bq+1/2) -- (c4);
   %     \draw[double,-{Implies[]}] (ra1bq-1) -- (c6);
        \draw[double,-{Implies[]}] (ra1bq) -- (c7);         
       \end{scope}
     \coordinate (m1) at ($(ldots1)!2/3!(rdots1)$);
   %  \draw[densely dotted,<->,>=stealth] (la1b3) -- (m1);
     %\draw (la2b1) .. controls ($(la2b1)!1/2!(ra2b1)$) .. (ra2b1);
   \end{tikzpicture}
   Continuing in this way, we conclude from \eqref{eq:kroncomm} that
   \begin{equation*}
    \mathbf{a}_1\otimes\mathbf{b}_{2j} = \mathbf{b}_j\otimes \mathbf{a}_2
    \iff \mathbf{b}_{2j} = \mathbf{0},\qquad j\in\left\{1,\ldots,\frac{q-1}{2}\right\}.
   \end{equation*}
   Now $\mathbf{a}_1\otimes\mathbf{b}_3=\mathbf{b}_2\otimes\mathbf{a}_1=\mathbf{0}$ so that $\mathbf{b}_3=\mathbf{0}$.
   Then from $\mathbf{a}_1\otimes\mathbf{b}_5=\mathbf{b}_3\otimes\mathbf{a}_1=\mathbf{0}$ and we find that $\mathbf{b}_5=\mathbf{0}$.
   In general, for $1\leq k \leq \dfrac{q+1}4$,
   \begin{equation*}
    \mathbf{a}_1\otimes\mathbf{b}_{2(2k)-1}=\mathbf{b}_{2k}\otimes\mathbf{a}_1=\mathbf{0} \iff \mathbf{b}_{2(2k)-1}=\mathbf{0},
   \end{equation*}
   and (using the above equation and the equation below inductively,
   since $2k-1<2(2k-1)-1$), for $1\leq k\leq \dfrac{q-1}4$,
   \begin{equation*}
    \mathbf{a}_1\otimes\mathbf{b}_{2(2k-1)-1}=\mathbf{b}_{2k-1}\otimes\mathbf{a}_1=\mathbf{0}
    \iff \mathbf{b}_{2(2k-1)-1}=\mathbf{0}.
   \end{equation*}
   Finally, $\mathbf{0}=\mathbf{a}_2\otimes\mathbf{b}_{q-1}=\mathbf{b}_q\otimes\mathbf{a}_1$ so that $\mathbf{b}_q=\mathbf{0}$.
   Following the diagrammatic argument of Lemma \ref{lem:commvec}(iii), together with the fact that $\mathbf{b}_{2j}=\mathbf{0}$ as above, we find that $\mathbf{b}_j=\mathbf{0}$ for $j\in\{2,\ldots,q\}$. It follows that
   \begin{equation*}
    B = \beta
    \begin{pmatrix}
     1 & 0 & 0 & \cdots & \cdots & 0 \\
     0 & 0 & 0 & \cdots & \cdots & 0 \\
     0 & 0 & \ddots & \ddots & \cdots & 0 \\
     \vdots & \vdots & \ddots & \ddots & \ddots & \vdots \\
     0 & 0 & \cdots & \ddots & \ddots & 0 \\
     0 & 0 & \cdots & \cdots & 0 & 0
    \end{pmatrix}
   \end{equation*}
   which yields (iii). The other rank-1 matrices follow similarly to yield the remaining cases.\qedhere
 \end{enumerate}
\end{proof}

Now we introduce the notation
\begin{equation*}
 x^{\otimes^j} = \underbrace{x\otimes\cdots\otimes x}_{j\text{ terms}}.
\end{equation*}
We have the following analogue to the prime number decomposition theorem.

\begin{corollary}
 Let $\ominus$ be a uniform Kronecker difference given by the pairs $(\upsilon_n,0)$ of matrices,
 $m,n\in\mathbb{N}$, as given in Proposition \ref{prop:assoc}. The following are equivalent:
 \begin{enumerate}[label=(\alph*)]
  \item
   for all $X\in\mathbb{F}_m\otimes\mathbb{F}_p\otimes\mathbb{F}_q$,
   $Y\in\mathbb{F}_q$, $Z\in\mathbb{F}_p$ and $m,p,q\in\mathbb{N}$:
   \begin{equation*}
    (X\ominus Y)\ominus Z = X\ominus (Z\oplus Y),
   \end{equation*}
  \item
   for each prime number $p\in\mathbb{N}$ there exists a unique $\pi_p\in\mathbb{F}_p$ with $\tr(\pi_p)=1$
   such that: if $n\in\mathbb{N}$ has the prime decomposition
   \begin{equation*}
    n = p_1^{j_1}\cdots p_r^{j_r}
   \end{equation*}
   where $r\in\mathbb{N}$, $j_1,\ldots,j_r\in\mathbb{N}$ and $p_1,\ldots,p_r$ are distinct prime numbers,
   then $\upsilon_n$ has the unique decomposition
   \begin{equation*}
    \upsilon_n = \pi_{p_1}^{\otimes^{j_1}}\otimes\cdots\otimes\pi_{p_r}^{\otimes^{j_r}},
   \end{equation*}
   where
   $\mathbb{F}_{n}=\mathbb{F}_{p_1}^{\otimes^{j_1}}\otimes\cdots\otimes\mathbb{F}_{p_r}^{\otimes^{j_r}}$
   and for every product of prime numbers $q_1\cdots q_{k}$ such that $n=q_1\cdots q_{k}$, $k=j_1+\cdots+j_r$,
   \begin{equation*}
    \upsilon_n = \pi_{p_1}^{\otimes^{j_1}}\otimes\cdots\otimes\pi_{p_r}^{\otimes^{j_r}} = \pi_{q_1}\otimes \cdots \otimes \pi_{q_{k}}.
   \end{equation*}
 \end{enumerate}
\end{corollary}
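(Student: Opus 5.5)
The plan is to prove both implications, using Proposition \ref{prop:assoc} for (a)$\Rightarrow$(b) and a direct computation with the $\gamma=0$ canonical form for (b)$\Rightarrow$(a).

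\emph{(a)$\Rightarrow$(b).} By Proposition \ref{prop:assoc}, condition (a) gives
\begin{equation*}
 \upsilon_{pq}=\upsilon_p\otimes\upsilon_q=\upsilon_q\otimes\upsilon_p
\end{equation*}
for all $p,q\in\mathbb{N}$.
\begin{enumerate}[label=(\roman*)]
 \item For each prime $p$ set $\pi_p:=\upsilon_p$. Then $\tr(\pi_p)=1$ by Theorem \ref{thm:unicanon2}.
 \item Induct on the number $k$ of prime factors of $n$, counted with multiplicity. Write $n=q_1\cdots q_k$ in any order and set $n'=q_2\cdots q_k$. Proposition \ref{prop:assoc} gives $\upsilon_n=\upsilon_{q_1}\otimes\upsilon_{n'}$. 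The induction hypothesis applied to $n'$ then gives $\upsilon_n=\pi_{q_1}\otimes\cdots\otimes\pi_{q_k}$ for every ordering of the prime factors. Taking the ordering grouped by distinct primes gives the stated form $\pi_{p_1}^{\otimes^{j_1}}\otimes\cdots\otimes\pi_{p_r}^{\otimes^{j_r}}$.
 \item Uniqueness of $\pi_p$ is immediate, since any admissible family must satisfy $\pi_p=\upsilon_p$ (take $n=p$). Uniqueness of the decomposition holds because it is determined by the $\pi_p$.
\end{enumerate}
The base case $n=1$ holds because $\upsilon_1$ is a $1\times1$ matrix of trace $1$, i.e.\ the empty tensor product.

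\emph{(b)$\Rightarrow$(a).} First show that (b) implies $\upsilon_{pq}=\upsilon_p\otimes\upsilon_q$ for all $p,q$. Take any prime-factor orderings of $p$ and of $q$; concatenating them gives a prime-factor ordering of $pq$, and the order-independence clause in (b) then yields the identity. By Lemma \ref{lem:zerosimp}(c) it suffices to verify $(X\ominus 0_q)\ominus 0_p=X\ominus 0_{pq}$. For this I will reuse the computation in the proof of Proposition \ref{prop:assoc}, which is valid for arbitrary $\upsilon$'s when $\gamma=0$. That computation shows
\begin{align*}
 X\ominus 0_{pq} &= \sum_{i,j=1}^m\tr((E_{ji}\otimes\upsilon_{pq}^T)X)E_{ij},\\
 (X\ominus 0_q)\ominus 0_p &= \sum_{i,j=1}^m\tr((E_{ji}\otimes\upsilon_p^T\otimes\upsilon_q^T)X)E_{ij}.
\end{align*}
These are equal because $\upsilon_{pq}=\upsilon_p\otimes\upsilon_q$. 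Lemma \ref{lem:zerosimp} applies because $\ominus$ is uniform and hence linear.

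The main obstacle is the interpretation of ``unique'' in (b) and the compatibility condition for reordered factors. Reading it literally, the conditions $\pi_p\otimes\pi_q=\pi_q\otimes\pi_p$ for distinct primes are consequences of (a), via Proposition \ref{prop:assoc}, and are built into (b). So no separate classification, such as the preceding lemma on $A\otimes B=B\otimes A$, is needed for the equivalence itself. Care is needed only to check that the identifications $\mathbb{F}_n=\mathbb{F}_{q_1}\otimes\cdots\otimes\mathbb{F}_{q_k}$ for different orderings are the standard Kronecker ones, so that the induction in (a)$\Rightarrow$(b) is legitimate.
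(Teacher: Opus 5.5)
Your proof is correct and follows the route the paper intends: the paper gives no separate proof, presenting the corollary as a consequence of Proposition~\ref{prop:assoc}. Your (a)$\Rightarrow$(b) iterates $\upsilon_{ab}=\upsilon_a\otimes\upsilon_b$ over prime factorizations, and your (b)$\Rightarrow$(a) rightly observes that the computation in that proposition's proof holds for arbitrary $\upsilon$'s when $\gamma=0$, combined with Lemma~\ref{lem:zerosimp}(c); you are also right that the commutation lemmas preceding the corollary are not needed for the equivalence itself.
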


\section{Conclusion}

We have defined Kronecker differences in the obvious way and established fundamental relations between Kronecker quotients and Kronecker differences. Similar to the theory of Kronecker quotients, uniformity of Kronecker differences provides a rich mathematical structure. Studying Kronecker differences via their matrix representations shows strong connections to Kronecker products and sums which reflect the properties of Kronecker sums.

Many open questions remain, in particular: a more concrete connection between classes of Kronecker differences and classes of Kronecker quotients is desirable. Furthermore, Kronecker quotients induced by exponentiating Kronecker differences (over the real numbers)
\begin{equation*}
 \exp(A\ominus B) = \exp(A)\oslash\exp(B)
\end{equation*}
remains unexplored. In this article we have considered only the linear Kronecker quotients and differences, while non-linear Kronecker differences provide a substantially different view which appears to be necessary to study Kronecker quotients induced by exponentiating Kronecker differences.

\bibliographystyle{amsplain}
\bibliography{krondiff}

\appendix

\section{Trace, transpose and orthogonality structures in tensor products}
\label{sec:orthogonality}

This section provides the definitions and technical results which are employed
to prove the theorems in Section \ref{sec:canon}. Technical lemmata are provided
in the appendix, Section \ref{sec:lemmata}. In later results we will make use of the block trace and partial trace as defined in \cite{filipiak18}.

\begin{definition}[Block trace and partial trace]
 \label{def:blocktrace}%
 Denote by $\Btr:\mathbb{F}_n\otimes\mathbb{F}_m\to\mathbb{F}_m$ the linear map
 defined by
 \begin{equation*}
  \Btr(B\otimes C) = \tr(B) C,
  \quad\text{for all $B\in\mathbb{F}_n$, $C\in\mathbb{F}_m$}
 \end{equation*}
 and by $\Ptr:\mathbb{F}_n\otimes\mathbb{F}_m\to\mathbb{F}_n$ the linear map
 defined by
 \begin{equation*}
  \Ptr(B\otimes C) = \tr(C) B,
  \quad\text{for all $B\in\mathbb{F}_n$, $C\in\mathbb{F}_m$}
 \end{equation*}
 and linear extension.
% $(\Btr_{m,n})_{m,n\in\mathbb{N}}$ \qquad OR \qquad $\displaystyle\Btr:\left(\bigoplus_{m,n\in\mathbb{N}}\mathbb{F}_n\otimes\mathbb{F}_m\right)\to\bigoplus_{m\in\mathbb{N}}\mathbb{F}_m$
\end{definition}

We will also require similar definitions for third order tensors, and combinations thereof, which follows.

\begin{definition}
 \label{def:partialtrace}%
 Denote by $\tr_1:\mathbb{F}_m\otimes\mathbb{F}_n\otimes\mathbb{F}_m\to\mathbb{F}_n\otimes\mathbb{F}_m$ the linear map
 given by
 \begin{equation*}
  \tr_1(A\otimes B\otimes C) = \tr(A) B\otimes C,
  \quad\text{for all $A,C\in\mathbb{F}_m$, $B\in\mathbb{F}_n$}
 \end{equation*}
 and linear extension. Similarly, define the maps
 $\tr_2:\mathbb{F}_m\otimes\mathbb{F}_n\otimes\mathbb{F}_m\to\mathbb{F}_m\otimes\mathbb{F}_m$,
 $\tr_3:\mathbb{F}_m\otimes\mathbb{F}_n\otimes\mathbb{F}_m\to\mathbb{F}_m\otimes\mathbb{F}_n$,
 $\tr_{12}:\mathbb{F}_m\otimes\mathbb{F}_n\otimes\mathbb{F}_m\to\mathbb{F}_m$, by
 \begin{align*}
  \tr_1(A\otimes B\otimes C) &= \tr(A) B\otimes C,
  &&\text{for all $A,C\in\mathbb{F}_m$, $B\in\mathbb{F}_n$}, \\
  \tr_2(A\otimes B\otimes C) &= \tr(B) A\otimes C,
  &&\text{for all $A,C\in\mathbb{F}_m$, $B\in\mathbb{F}_n$}, \\
  \tr_3(A\otimes B\otimes C) &= \tr(C) A\otimes B,
  &&\text{for all $A,C\in\mathbb{F}_m$, $B\in\mathbb{F}_n$}, \\
  \tr_{12}(A\otimes B\otimes C) &= \tr(A)\tr(B) C,
  &&\text{for all $A,C\in\mathbb{F}_m$, $B\in\mathbb{F}_n$},
 \end{align*}
 where each map is linearly extended.
\end{definition}

Similar to the block trace and partial trace, we may define a block transpose and partial transpose following the definitions in \cite{bruss05,peres96}.

\begin{definition}[Block transpose and partial transpose]
 Denote the linear map $\BT:\mathbb{F}_n\otimes\mathbb{F}_m\to\mathbb{F}_n\otimes\mathbb{F}_m$ by
 \begin{equation*}
  \BT(B\otimes C) = (B^T)\otimes C,
  \quad\text{for all $B\in\mathbb{F}_n$, $C\in\mathbb{F}_m$}
 \end{equation*}
 and the linear map $\PT:\mathbb{F}_n\otimes\mathbb{F}_m\to\mathbb{F}_n\otimes\mathbb{F}_m$ by
 \begin{equation*}
  \PT(B\otimes C) = B\otimes(C^T),
  \quad\text{for all $B\in\mathbb{F}_n$, $C\in\mathbb{F}_m$}
 \end{equation*}
 and linear extension.
\end{definition}

We will also make use of the transpose in different modes of
the tensor product, similar to the partial transpose, as defined
below.

\begin{definition}
 Denote by
 $T_3:\mathbb{F}_m\otimes\mathbb{F}_n\otimes\mathbb{F}_m\to\mathbb{F}_m\otimes\mathbb{F}_n\otimes\mathbb{F}_m$
 the linear map defined by
 \begin{equation*}
  T_3(A\otimes B\otimes C) = A\otimes B\otimes (C^T),
  \quad\text{for all $A,C\in\mathbb{F}_m$, $B\in\mathbb{F}_n$}
 \end{equation*}
 and linear extension. Similarly, denote by 
 $T_{12}:\mathbb{F}_m\otimes\mathbb{F}_n\otimes\mathbb{F}_m\to\mathbb{F}_m\otimes\mathbb{F}_n\otimes\mathbb{F}_m$
 the map defined by
 \begin{equation*}
  T_{12}(A\otimes B\otimes C) = (A^T)\otimes (B^T)\otimes C,
  \quad\text{for all $A,C\in\mathbb{F}_m$, $B\in\mathbb{F}_n$}
 \end{equation*}
 and linear extension.
\end{definition}

\begin{remark}
 \label{rem:blockispartial}%
 Let $\text{Id}_m:\mathbb{F}_m\to\mathbb{F}_m$ denote the identity on $\mathbb{F}_m$
 and $T:\mathbb{F}_m\to\mathbb{F}_m$ the matrix transpose.
 Then we may write $\tr_1=\tr\otimes\text{Id}_n\otimes\text{Id}_m$,
 $\tr_{12}=\tr\otimes\tr\otimes\text{Id}_m$, $T_3=\text{Id}_m\otimes\text{Id}_n\otimes T$,
 etc. We also have that $\Btr\circ\tr_1=\tr_{12}=\Btr\circ\tr_2$.
\end{remark}

\begin{remark}
 \label{rem:bimodule}%
 The tensor product $\mathbb{F}_m\otimes\mathbb{F}_n$ is a left $\mathbb{F}_m$-module
 when endowed with the product $\cdot:\mathbb{F}_m\times(\mathbb{F}_m\otimes\mathbb{F}_n)\to\mathbb{F}_m\otimes\mathbb{F}_n$
 given by $A\cdot(B\otimes C):=(AB)\otimes C$ and its linear extension. Similarly,
 $\mathbb{F}_m\otimes\mathbb{F}_n$ is a right $\mathbb{F}_m$-module where $(B\otimes C)\cdot A:=(BA)\otimes C$,
 making $\mathbb{F}_m\otimes\mathbb{F}_n$ an $(\mathbb{F}_m, \mathbb{F}_m)$-bimodule. The bilinear form
 $\langle\cdot,\cdot\rangle:\mathbb{F}_n\times\mathbb{F}_n\to\mathbb{F}$ given by
 \begin{equation*}
  \langle A,B\rangle=\tr(AB^T)
 \end{equation*}
 induces the form
 $(\cdot,\cdot):(\mathbb{F}_m\otimes\mathbb{F}_n)\times(\mathbb{F}_m\otimes\mathbb{F}_n)\to \mathbb{F}_m$,
 \begin{equation*}
  (X,Y) = \Ptr(XY^T).
 \end{equation*}
 Now, $\mathbb{F}_m$ is an $\mathbb{F}$-algebra which is isomorphic to its opposite $\mathbb{F}_m^{op}$ (with
 multiplication $A\mathbin{\cdot_{op}} B=B\cdot A$) under the transpose, i.e.
 \begin{equation*}
  (A\cdot B)^T = B^T\cdot A^T = A^T\mathbin{\cdot_{op}}B^T.
 \end{equation*}
 Hence, $\mathbb{F}_m$ is a $(\mathbb{F}_m, \mathbb{F}_m^{op})$-bimodule with
 the left $\mathbb{F}_m$-module product $A\ast B := A\cdot B$ and the right $\mathbb{F}_m^{op}$-module
 product $B\odot A := B\cdot A^T$, in particular
 \begin{equation*}
  (B\odot A)\odot C = (B\cdot A^T)\cdot C^T = B\cdot (A^T\cdot C^T) = B\cdot (C\cdot A)^T = B\odot(A\mathbin{\cdot_{op}}C).
 \end{equation*}
 Now,
 \begin{equation*}
  (A\ast B\odot C)^T = (ABC^T)^T = CB^TA^T = C\ast B^T\odot A.
 \end{equation*}
 This shows that transposition is an involution of the $(\mathbb{F}_m^{op}, \mathbb{F}_m)$-bimodule
 $\mathbb{F}_m$ in the sense of \cite{loos94}. Moreover, we can now show that $(\cdot,\cdot)$
 is an $\mathbb{F}_m$-valued sesquilinear form in the sense of \cite{loos94} (here we are using left modules
 $\mathbb{F}_m\otimes\mathbb{F}_n$ instead of right modules as defined by Loos; however, the concepts
 are equivalent) using Lemma \ref{lem:bisesquilinear}, i.e.
 \begin{equation*}
  (A\cdot X, B\cdot Y) = A\ast (X,Y)\odot B
 \end{equation*}
 which we present as our next lemma. This sesquilinear form induces an orthogonality relation
 on $\mathbb{F}_m\otimes\mathbb{F}_n$ (Lemma \ref{lem:orthomod}).
\end{remark}

\begin{lemma}
 \label{lem:bisesquilinear}%
 Let the function $(\cdot,\cdot):(\mathbb{F}_m\otimes\mathbb{F}_n)\times(\mathbb{F}_m\otimes\mathbb{F}_n)\to \mathbb{F}_m$
 be given by $(X,Y) = \Ptr(XY^T)$. Then for all $X,Y,Z\in\mathbb{F}_m\otimes\mathbb{F}_n$, $A\in\mathbb{F}_m$,
 and $\alpha\in\mathbb{F}$,
 \begin{enumerate}[label=(\alph*)]
  \item $(X+\alpha Y,Z) = (X,Z) + \alpha (Y,Z)$,
  \item $(X,Y+\alpha Z) = (X,Y)+ \alpha (X,Z)$,
  \item $(A\cdot X,Y) = A(X,Y)$,
  \item $(X,A\cdot Y) = (X,Y)A^T$.
 \end{enumerate}
\end{lemma}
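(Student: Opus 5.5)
Since $\Ptr$, matrix multiplication, the transpose and the Kronecker product are all linear or bilinear, each of (a)--(d) can be checked on simple tensors and then extended linearly. I will write $X=\sum_k X_k\otimes X'_k$, $Y=\sum_l Y_l\otimes Y'_l$ and $Z=\sum_r Z_r\otimes Z'_r$, with $X_k,Y_l,Z_r\in\mathbb{F}_m$ and $X'_k,Y'_l,Z'_r\in\mathbb{F}_n$. The key identities are $(B\otimes C)^T=B^T\otimes C^T$ and the mixed product rule $(B\otimes C)(D\otimes E)=(BD)\otimes(CE)$. Together they give
\begin{equation*}
 XY^T=\sum_{k,l}(X_kY_l^T)\otimes(X'_kY_l'^T),\qquad
 (X,Y)=\sum_{k,l}\tr(X'_kY_l'^T)\,X_kY_l^T .
\end{equation*}
The whole proof is essentially this formula.

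\textbf{Parts (a) and (b).} The map $(X,Y)\mapsto XY^T$ is linear in $X$. It is also linear in $Y$, because $(Y+\alpha Z)^T=Y^T+\alpha Z^T$. Since $\Ptr$ is linear, both (a) and (b) follow at once, with no need to expand into simple tensors.

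\textbf{Part (c).} The module action is $A\cdot X=\sum_k (AX_k)\otimes X'_k=(A\otimes I_n)X$, which I will verify on simple tensors. Then $(A\cdot X)Y^T=(A\otimes I_n)XY^T$, and by the expansion above
\begin{equation*}
 \Ptr((A\otimes I_n)XY^T)=\sum_{k,l}\tr(X'_kY_l'^T)\,AX_kY_l^T=A\,(X,Y).
\end{equation*}

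\textbf{Part (d).} Here $A\cdot Y=(A\otimes I_n)Y$, so $(A\cdot Y)^T=Y^T(A^T\otimes I_n)$. Hence $X(A\cdot Y)^T=XY^T(A^T\otimes I_n)$, and the same expansion gives $\sum_{k,l}\tr(X'_kY_l'^T)X_kY_l^TA^T=(X,Y)A^T$.

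\textbf{Main point of care.} The one step that needs attention is confirming that the abstract action $A\cdot(B\otimes C)=(AB)\otimes C$ from Remark \ref{rem:bimodule} coincides with left multiplication by $A\otimes I_n$ under the identification $\mathbb{F}_m\otimes\mathbb{F}_n=\mathbb{F}_{mn}$. This follows from the mixed product rule together with the entrywise definition \eqref{eq:kronent}. Once it is settled, the identity $\Ptr((A\otimes I_n)M(D\otimes I_n))=A\,\Ptr(M)\,D$ for all $M\in\mathbb{F}_{mn}$ and $A,D\in\mathbb{F}_m$ proves (c) and (d) simultaneously. It suffices to check this identity on simple tensors $M=B\otimes C$, where both sides equal $\tr(C)ABD$.
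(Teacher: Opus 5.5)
Your proof is correct, and it takes a somewhat different route from the paper's.

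The paper's route. It expands along the standard basis of $\mathbb{F}_n$, writing $X=\sum_{i,j=1}^n X_{ij}\otimes E_{ij}$ and $Y=\sum_{k,l=1}^n Y_{kl}\otimes E_{kl}$. It then uses $\tr(E_{ij}E_{lk})=\delta_{ik}\delta_{jl}$ to obtain the closed form $(X,Y)=\sum_{i,j=1}^n X_{ij}Y_{ij}^T$. Parts (c) and (d) are read off from this formula together with $A\cdot X=\sum_{i,j}(AX_{ij})\otimes E_{ij}$.

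Your route. You use arbitrary simple-tensor expansions and recast the module action as left multiplication by $A\otimes I_n$. This reduces (c) and (d) to the single identity $\Ptr((A\otimes I_n)M(D\otimes I_n))=A\,\Ptr(M)\,D$.

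What each buys.
\begin{itemize}
\item Your version is basis-free and treats (c) and (d) uniformly.
\item It also makes the well-definedness of the action $A\cdot(B\otimes C)=(AB)\otimes C$ automatic; the paper simply asserts this via linear extension.
\item The paper's coordinate version produces the explicit formula $(X,Y)=\sum_{i,j}X_{ij}Y_{ij}^T$. This is exactly what Lemma~\ref{lem:orthomod}(d) and Lemma~\ref{lem:orthocmp} use next, to recover the blocks as $X_{ij}=(X,I_m\otimes E_{ij})$.
\item Your expansion yields the same formula if you specialize $X'_k,Y'_l$ to the matrix units $E_{ij}$, so nothing is lost by your approach.
\end{itemize}
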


\begin{proof}
 (a) and (b) follow by linearity of the transpose and partial trace and distributivity of
 the matrix product.
 Now $X$ and $Y$ may be written in the form
 \begin{equation*}
  X = \sum_{i,j=1}^n X_{ij}\otimes E_{ij}, \qquad
  Y = \sum_{k,l=1}^n Y_{kl}\otimes E_{kl}
 \end{equation*}
 for some matrixes $X_{ij},Y_{kl}\in\mathbb{F}_m$, $i,j,k,l\in\{1,\ldots,n\}$.
 Direct calculation yields
 \begin{align*}
  (X,Y)
    &= \Ptr\left(\left(\sum_{i,j=1}^nX_{ij}\otimes E_{ij}\right)\left(\sum_{k,l=1}^n Y_{kl}\otimes E_{kl}\right)^T\right) \\
    &= \Ptr\left(\sum_{i,j,k,l=1}^n X_{ij}Y_{kl}^T\otimes (E_{ij}E_{lk})\right) \\
    &= \sum_{i,j,k,l=1}^n \tr(E_{ij}E_{lk}) X_{ij}Y_{kl}^T 
     = \sum_{i,j=1}^n X_{ij}Y_{ij}^T,\vphantom{\left(\sum_{i,j,k,l=1}^n\right)}.
 \end{align*}
 and since $A\cdot X = \displaystyle\sum_{i,j=1}^n (AX_{ij})\otimes E_{ij}$,
 \begin{equation*}
  (A\cdot X,Y) = \sum_{i,j=1}^n (AX_{ij})Y_{ij}^T = A\left(\sum_{i,j=1}^n X_{ij}Y_{ij}^T\right) = A(X,Y).
 \end{equation*}
 Similarly,
 \begin{equation*}
  (X,A\cdot Y) = \sum_{i,j=1}^n X_{ij}(AY_{ij})^T = \left(\sum_{i,j=1}^n X_{ij}Y_{ij}^T\right)A^T = (X,Y)A^T.
 \end{equation*}
 Thus, (c) and (d) hold true.
\end{proof}

\begin{example}
 \label{eg:complex}%
 The well known M\"obius matrix representation of complex numbers, given by 
  $a+ib\mapsto\left(\begin{smallmatrix} a & b \\ -b & a\end{smallmatrix}\right)$
 and the complex conjugate
  $a-ib\mapsto\left(\begin{smallmatrix} a & b \\ -b & a\end{smallmatrix}\right)^T$,
 yields a function $\phi:\mathbb{C}_n\to\mathbb{R}_2\otimes\mathbb{R}_n$, namely
 \begin{equation*}
  \sum_{i,j=1}^n (a_{ij}+b_{ij})E_{ij}
   \mapsto \sum_{i,j=1}^n \begin{pmatrix} a_{ij} & b_{ij} \\ -b_{ij} & a_{ij}\end{pmatrix}
                          \otimes E_{ij}.
 \end{equation*}
 Then the sesquilinear form $(\cdot,\cdot)$ on $\mathbb{R}_2\otimes\mathbb{R}_n$
 is equivalent to the Hilbert-Schmidt inner product (also called the
 Frobenius inner product) on $\mathbb{C}_n$, where the Hilbert-Schmidt inner product is given by
 $\langle A,B\rangle=\tr(AB^*)$ and $^*$ denotes the entry-wise complex conjugate of
 the transposed matrix. One easily shows that,
 \begin{equation*}
  \phi(\langle A,B\rangle) = (\phi(A),\phi(B)).
 \end{equation*}
\end{example}

The representation in Example \ref{eg:complex} also expresses orthogonality in $\mathbb{C}_n$,
while in general the sesquilinear form $(\cdot,\cdot)$ induces a more general orthostructure.

\begin{lemma}
 \label{lem:orthomod}%
 Let $\perp$ be the relation on the left $\mathbb{F}_m$-module $\mathbb{F}_m\otimes\mathbb{F}_n$
 given by $X\perp Y$ if and only if $(X,Y)=(Y,X)=0$. Then
 \begin{enumerate}[label=(\alph*)]
  \item $X\perp Y$ $\implies$ $Y\perp X$,
  \item $X\perp Y$ and $X\perp Z$ $\implies$ $X\perp(Y+Z)$,
  \item $X\perp Y$ $\implies$ $X\perp(A\cdot Y)$ for all $A\in\mathbb{F}_m$,
  \item $X\perp Y$ for all $Y\in\mathbb{F}_m\otimes\mathbb{F}_n$ $\iff$ $X=0$.
 \end{enumerate}
\end{lemma}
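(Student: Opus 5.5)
The plan is to reduce every part to the explicit formula for $(X,Y)$ derived in the proof of Lemma \ref{lem:bisesquilinear}. Writing $X=\sum_{i,j=1}^n X_{ij}\otimes E_{ij}$ and $Y=\sum_{i,j=1}^n Y_{ij}\otimes E_{ij}$, that computation gives
\begin{equation*}
 (X,Y)=\sum_{i,j=1}^n X_{ij}Y_{ij}^T,
\end{equation*}
and hence $(Y,X)=\sum_{i,j}Y_{ij}X_{ij}^T=(X,Y)^T$. I will use this transpose symmetry throughout.

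For (a), the relation $X\perp Y$ is defined by the symmetric pair of conditions $(X,Y)=0$ and $(Y,X)=0$, so it is symmetric by definition; the transpose symmetry also shows that either condition alone suffices. For (b), I will apply parts (a) and (b) of Lemma \ref{lem:bisesquilinear}: $(X,Y+Z)=(X,Y)+(X,Z)=0$ and $(Y+Z,X)=(Y,X)+(Z,X)=0$. For (c), I will apply parts (c) and (d) of Lemma \ref{lem:bisesquilinear}: $(X,A\cdot Y)=(X,Y)A^T=0$ and $(A\cdot Y,X)=A(Y,X)=0$.

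For (d), the direction ``$\Leftarrow$'' is immediate, since $X=0$ gives $(0,Y)=\sum 0\cdot Y_{ij}^T=0$ and likewise $(Y,0)=0$. For ``$\Rightarrow$'', I take $Y=X$, which gives $\sum_{i,j}X_{ij}X_{ij}^T=0$. Over a general field this does not force $X=0$, because nonzero isotropic vectors can exist; this is the main obstacle. To avoid it, I will instead choose test elements $Y=E_{pq}\otimes E_{kl}$. Then
\begin{equation*}
 (X,Y)=X_{kl}E_{pq}^T=X_{kl}E_{qp},
\end{equation*}
and this vanishing for all $p,q$ forces column $q$ of $X_{kl}$ to be zero for every $q$. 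Hence $X_{kl}=0$ for all $k,l$, and so $X=0$. This step relies on the nondegeneracy of the trace form on each tensor factor, which is the property exploited in Lemma \ref{lem:parttrequal}.
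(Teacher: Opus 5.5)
Your proof is correct and follows the paper's route: (a) holds by the definition of $\perp$, (b) and (c) follow from the sesquilinearity properties in Lemma \ref{lem:bisesquilinear}, and (d) is proved by pairing $X$ against explicit test elements, built on the same block formula $(X,Y)=\sum_{i,j}X_{ij}Y_{ij}^T$. The only difference is cosmetic: the paper tests against $Y=I_m\otimes E_{ij}$, which gives $(X,Y)=X_{ij}$ in one step, while your $E_{pq}\otimes E_{kl}$ needs the extra column argument; also, your closing appeal to Lemma \ref{lem:parttrequal} is unnecessary, since your computation is self-contained.
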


\begin{proof}
 (a) follows by the definition of $\perp$,
 (b) and (c) follow by Lemma \ref{lem:bisesquilinear}.
 For (d), suppose that $X\perp Y$ for all $Y\in\mathbb{F}_m\otimes\mathbb{F}_n$. In particular,
     Let $Y=I_m\otimes E_{ij}$. Since $X$ may be written in the form
     \begin{equation*}
      X = \sum_{k,l=1}^n X_{kl}\otimes E_{kl}
     \end{equation*}
     for some matrixes $X_{kl}\in\mathbb{F}_m$, $k,l\in\{1,\ldots,n\}$, and since $X\perp I_m\otimes E_{ij}$
     \begin{align*}
      0 &= (X,I_m\otimes E_{ij})
         = \Ptr\left(\left(\sum_{k,l=1}^n X_{kl}\otimes E_{kl}\right)(I_m\otimes E_{ij})^T\right) \\
        &= \Ptr\left(\sum_{k,l=1}^n X_{kl}\otimes (E_{kl}E_{ji})\right)
         = \sum_{k,l=1}^n \tr(E_{kl}E_{ji})X_{kl} \\
        &= X_{ij}.\vphantom{\left(\sum_{k,l=1}^n\right)}
     \end{align*}
     It follows that $X=0$.
\end{proof}

\begin{remark}
 \label{rem:orthomod}%
 Lemma \ref{lem:orthomod} show that the pair $(\mathbb{F}_m\otimes\mathbb{F}_n,\perp)$ is an
 ortho- left $\mathbb{F}_m$-module \cite{piziak92}.
 Moreover, $\mathbb{F}_m\otimes\mathbb{F}_n$ has an orthogonal basis
 \begin{equation*}
  \{\,I_m\otimes E_{ij}\,:\,i,j\in\{1,\ldots,n\}\},
 \end{equation*}
 since
 \begin{equation*}
  (I_m\otimes E_{ij}, I_m\otimes E_{kl}) = \begin{cases} I_m & \text{if~}i=k\text{~and~}j=l,\\ 0 & \text{otherwise}. \end{cases}
 \end{equation*}
 Similarly, we have that $\mathbb{F}_m\otimes\mathbb{F}_n$ is a left $\mathbb{F}_n$ module with
 an orthogonal basis in the obvious way, $\mathbb{F}_m\otimes\mathbb{F}_n\otimes\mathbb{F}_p$ is
 a left $\mathbb{F}_n$-module with an orthogonal basis and so on.
\end{remark}

\begin{lemma}
 \label{lem:orthocmp}%
 Let $X,Y\in \mathbb{F}_m\otimes\mathbb{F}_n$, then $X=Y$ if and only if
 $(X,I_m\otimes E_{ij})=(Y,I_m\otimes E_{ij})$ for all $i,j\in\{1,\ldots,n\}$.
\end{lemma}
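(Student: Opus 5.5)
The forward implication is immediate: if $X=Y$, then $(X,I_m\otimes E_{ij})=(Y,I_m\otimes E_{ij})$ for every $i,j$. For the converse I would pass to the difference $Z:=X-Y$. By Lemma \ref{lem:bisesquilinear}(a),
\begin{equation*}
 (Z,I_m\otimes E_{ij})=(X,I_m\otimes E_{ij})-(Y,I_m\otimes E_{ij})=0
 \qquad\text{for all } i,j\in\{1,\ldots,n\},
\end{equation*}
so it suffices to show that $Z=0$.

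To do this I would write $Z$ in the block form used in Lemma \ref{lem:orthomod}(d),
\begin{equation*}
 Z=\sum_{k,l=1}^n Z_{kl}\otimes E_{kl},\qquad Z_{kl}\in\mathbb{F}_m .
\end{equation*}
Then I would repeat the computation from the proof of that lemma:
\begin{equation*}
 (Z,I_m\otimes E_{ij})=\Ptr\Bigl(\sum_{k,l=1}^n Z_{kl}\otimes (E_{kl}E_{ji})\Bigr)
 =\sum_{k,l=1}^n \tr(E_{kl}E_{ji})\,Z_{kl}=Z_{ij}.
\end{equation*}
Hence every block satisfies $Z_{ij}=0$, which gives $Z=0$ and therefore $X=Y$.

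An alternative is to quote Lemma \ref{lem:orthomod}(d) directly. That route would additionally require $(I_m\otimes E_{ij},Z)=0$, which one could obtain from the analogous computation (it equals $Z_{ij}^T$). This is unnecessary, though, because the one-sided computation above already determines every block.

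There is no real obstacle here. The only point needing care is the index bookkeeping $\tr(E_{kl}E_{ji})=\delta_{lj}\delta_{ki}$, together with the block convention: $\Ptr$ traces out the second factor $\mathbb{F}_n$, as in Definition \ref{def:blocktrace}.
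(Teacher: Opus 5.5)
Your argument is correct and is essentially the paper's proof: both reduce to the identity $(X,I_m\otimes E_{ij})=X_{ij}$ for the block decomposition $X=\sum_{i,j}X_{ij}\otimes E_{ij}$. The paper obtains this from Lemma~\ref{lem:bisesquilinear} and the orthogonality relations in Remark~\ref{rem:orthomod}, while you verify it by the direct computation of Lemma~\ref{lem:orthomod}(d) applied to $X-Y$. You are also right that only the one-sided pairing is needed, since it already recovers every block.
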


\begin{proof}
 Let
 \begin{equation*}
  X = \sum_{i,j=1}^n X_{ij}\otimes E_{ij} = \sum_{i,j=1}^n X_{ij}\cdot (I_m \otimes E_{ij}), \qquad
  Y = \sum_{k,l=1}^n Y_{kl}\cdot (I_m \otimes E_{kl}).
 \end{equation*}
 By Lemma \ref{lem:bisesquilinear} and Remark \ref{rem:orthomod},
 \begin{equation*}
  (X,I_m\otimes E_{ij}) = X_{ij}
 \end{equation*}
 and the lemma follows immediately.
\end{proof}

\section{Additional lemmata}
\label{sec:lemmata}

This section provides the technical results which are employed
to prove the theorems in Section \ref{sec:canon}.

\begin{lemma}
 \label{lem:tracezero}%
 Let $C\in\mathbb{F}_m\otimes\mathbb{F}_n\otimes\mathbb{F}_m$ such that $\tr_2(C)=0$.
 Then $\tr_2(C^T)=0$, $\tr_2(C(A\otimes I_n\otimes B))=0$ and $\tr_{12}(C(A\otimes I_n\otimes B))=0$
 for all $A,B\in\mathbb{F}_m$.
\end{lemma}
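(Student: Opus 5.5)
The plan is to prove all three claims by expanding $C$ in an elementary basis. Write
\begin{equation*}
 C = \sum_{i,j,u,v=1}^m E_{ij}\otimes C_{ij;uv}\otimes E_{uv},\qquad C_{ij;uv}\in\mathbb{F}_n .
\end{equation*}
By Definition \ref{def:partialtrace},
\begin{equation*}
 \tr_2(C)=\sum_{i,j,u,v=1}^m \tr(C_{ij;uv})\,E_{ij}\otimes E_{uv}.
\end{equation*}
The matrices $E_{ij}\otimes E_{uv}$ are linearly independent, so the hypothesis $\tr_2(C)=0$ is equivalent to
\begin{equation*}
 \tr(C_{ij;uv})=0\quad\text{for all } i,j,u,v .
\end{equation*}
All three conclusions will be reduced to this condition on the blocks.

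For the first claim, transposition of a Kronecker product factorwise gives
\begin{equation*}
 C^T=\sum_{i,j,u,v=1}^m E_{ji}\otimes C_{ij;uv}^T\otimes E_{vu}.
\end{equation*}
Hence
\begin{equation*}
 \tr_2(C^T)=\sum_{i,j,u,v=1}^m \tr(C_{ij;uv}^T)\,E_{ji}\otimes E_{vu}=0,
\end{equation*}
since $\tr(X^T)=\tr(X)$ for every $X\in\mathbb{F}_n$.

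For the second claim, apply the mixed product rule:
\begin{equation*}
 C(A\otimes I_n\otimes B)=\sum_{i,j,u,v=1}^m (E_{ij}A)\otimes C_{ij;uv}\otimes (E_{uv}B).
\end{equation*}
$\tr_2$ is linear and only acts on the middle factor, so
\begin{equation*}
 \tr_2\bigl(C(A\otimes I_n\otimes B)\bigr)=\sum_{i,j,u,v=1}^m \tr(C_{ij;uv})\,(E_{ij}A)\otimes(E_{uv}B)=0.
\end{equation*}
Conceptually, $\tr_2$ is a module map over the outer factors: for every $D$,
\begin{equation*}
 \tr_2\bigl(D(A\otimes I_n\otimes B)\bigr)=\tr_2(D)(A\otimes B).
\end{equation*}
Checking this identity on elementary tensors $P\otimes Q\otimes R$ and extending linearly gives the claim directly.

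For the third claim, use Remark \ref{rem:blockispartial}: $\tr_{12}=\Btr\circ\tr_2$, with $\Btr$ acting on $\mathbb{F}_m\otimes\mathbb{F}_m$. Then the second claim gives
\begin{equation*}
 \tr_{12}\bigl(C(A\otimes I_n\otimes B)\bigr)=\Btr\bigl(\tr_2(C(A\otimes I_n\otimes B))\bigr)=\Btr(0)=0.
\end{equation*}

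The main obstacle is the bookkeeping of which tensor factor each operation acts on, in particular confirming that transposition reverses each factor separately and does not permute them. This holds because $(P\otimes Q\otimes R)^T=P^T\otimes Q^T\otimes R^T$. With that in place, no real difficulty remains.
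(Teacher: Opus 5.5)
Your proof is correct and follows the paper's argument: you expand $C=\sum E_{ij}\otimes C_{ij;uv}\otimes E_{uv}$, deduce $\tr(C_{ij;uv})=0$ from linear independence, and compute each partial trace blockwise. The one small difference is the third claim: you derive it from the second via $\tr_{12}=\Btr\circ\tr_2$ (Remark \ref{rem:blockispartial}), whereas the paper expands it directly as $\sum\tr(E_{ij}A)\tr(C_{ijkl})E_{kl}B=0$, and both routes are equally valid.
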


\begin{proof}
 We write $C$ in the form
 \begin{equation*}
  C = \sum_{i,j,k,l=1}^m E_{ij}\otimes C_{ijkl}\otimes E_{kl}.
 \end{equation*}
 Then $\tr_2(C) =0$ yields
 \begin{equation*}
  0 = \sum_{i,j,k,l=1}^m \tr(C_{ijkl}) E_{ij}\otimes E_{kl},
 \end{equation*}
 and since $E_{ij}\otimes E_{kl}$, $i,j,k,l\in\{1,\ldots,m\}$, is a basis for $\mathbb{F}_m\otimes\mathbb{F}_m$,
 we have
 \begin{equation*}
  \tr(C_{ijkl}) = 0, \qquad i,j,k,l\in\{1,\ldots,m\}.
 \end{equation*}
 Hence,
 \begin{align*}
  \tr_2(C^T)
   &= \tr_2\left(\sum_{i,j,k,l=1}^m E_{ij}^T\otimes C_{ijkl}^T\otimes E_{kl}^T\right) \\
   &= \sum_{i,j,k,l=1}^m \tr(C_{ijkl}^T)E_{ji}\otimes E_{lk} \\
   &= \sum_{i,j,k,l=1}^m \tr(C_{ijkl})E_{ji}\otimes E_{lk}
    = 0.
 \end{align*}
 Similarly,
 \begin{align*}
  \tr_2(C(A\otimes I_n\otimes B))
   &= \tr_2\left(\sum_{i,j,k,l=1}^m E_{ij}A\otimes C_{ijkl}\otimes E_{kl}B\right) \\
   &= \sum_{i,j,k,l=1}^m  \tr(C_{ijkl})E_{ij}A\otimes E_{kl}B = 0, \\
  \tr_{12}(C(A\otimes I_n\otimes B))
   &= \tr_{12}\left(\sum_{i,j,k,l=1}^m E_{ij}A\otimes C_{ijkl}\otimes E_{kl}B\right) \\
   &= \sum_{i,j,k,l=1}^m \tr(E_{ij}A)\tr(C_{ijkl}) E_{kl}B = 0. \qedhere
 \end{align*}
\end{proof}

%\begin{lemma}
% \label{lem:tracecomm}%
% Let $A\in\mathbb{F}_m\otimes\mathbb{F}_n\otimes\mathbb{F}_m$ and $B\in\mathbb{F}_m\otimes\mathbb{F}_n$. Then
% \begin{equation*}
%  \tr_{12}(A(B\otimes I_m)) = \tr_{12}((B\otimes I_m)A).
% \end{equation*}
%\end{lemma}

%\begin{proof}
% We write $A$ in the form
% \begin{equation*}
%  A = \sum_{i,j=1}^m\sum_{k,l=1}^n E_{ij}\otimes E_{kl}\otimes A_{ijkl},
% \end{equation*}
% so that
% \begin{align*}
%  \tr_{12}(A(B\otimes I_m))
%   &= \tr_{12}\left(\sum_{i,j=1}^m\sum_{k,l=1}^n ((E_{ij}\otimes E_{kl})B)\otimes A_{ijkl}\right) \\
%   &= \sum_{i,j=1}^m\sum_{k,l=1}^n \tr((E_{ij}\otimes E_{kl})B)\otimes A_{ijkl} \\
%   &= \sum_{i,j=1}^m\sum_{k,l=1}^n \tr(B(E_{ij}\otimes E_{kl}))\otimes A_{ijkl} \\
%   &= \tr_{12}\left(\sum_{i,j=1}^m\sum_{k,l=1}^n (B(E_{ij}\otimes E_{kl}))\otimes A_{ijkl}\right) \\
%   &= \tr_{12}((B\otimes I_m)A). \qedhere
% \end{align*}
%\end{proof}

\begin{lemma}
 \label{lem:parttrans1}%
 Let $A\in\mathbb{F}_m\otimes\mathbb{F}_n\otimes\mathbb{F}_m$. Then
 \begin{equation*}
  \tr_{12}(A)^T = \tr_{12}(T_3(A)) = \tr_{12}(A^T), \qquad \tr_3(A)^T = \tr_3(T_{12}(A)).
 \end{equation*}
\end{lemma}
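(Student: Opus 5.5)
All three identities are linear in $A$ (transpose, $T_3$ and $\tr_{12}$ are linear maps), so I would reduce to simple tensors. Every $A\in\mathbb{F}_m\otimes\mathbb{F}_n\otimes\mathbb{F}_m$ is a finite sum of simple tensors $X\otimes Y\otimes Z$ with $X,Z\in\mathbb{F}_m$ and $Y\in\mathbb{F}_n$; for instance, the basis elements $E_{ij}\otimes E_{kl}\otimes E_{rs}$ suffice. It is therefore enough to verify each identity on such a tensor and then extend by linearity.

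For the first chain, take $A=X\otimes Y\otimes Z$. By Definition \ref{def:partialtrace}, $\tr_{12}(A)=\tr(X)\tr(Y)Z$, so
\begin{equation*}
 \tr_{12}(A)^T=\tr(X)\tr(Y)Z^T .
\end{equation*}
By the definition of $T_3$, $T_3(A)=X\otimes Y\otimes Z^T$, whose $\tr_{12}$ is again $\tr(X)\tr(Y)Z^T$. The Kronecker product satisfies $(X\otimes Y\otimes Z)^T=X^T\otimes Y^T\otimes Z^T$, which follows directly from the entry formula \eqref{eq:kronent}. Hence
\begin{equation*}
 \tr_{12}(A^T)=\tr(X^T)\tr(Y^T)Z^T=\tr(X)\tr(Y)Z^T ,
\end{equation*}
using $\tr(X^T)=\tr(X)$. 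All three expressions agree.

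For the second identity, with the same $A$ we have $\tr_3(A)^T=\tr(Z)(X\otimes Y)^T=\tr(Z)\,X^T\otimes Y^T$. On the other side, $T_{12}(A)=X^T\otimes Y^T\otimes Z$, so $\tr_3(T_{12}(A))=\tr(Z)\,X^T\otimes Y^T$, and the two sides coincide.

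The only point needing care is that the maps involved are well defined and linear. Transpose is linear on $\mathbb{F}_{m^2n}$, and it agrees on simple tensors with the factorwise transpose. The partial maps $T_3$, $T_{12}$, $\tr_{12}$ and $\tr_3$ are defined in the paper by linear extension from simple tensors. Consequently both sides of each identity are linear maps, and agreement on a spanning set of simple tensors implies agreement everywhere. I do not expect any substantive obstacle beyond this bookkeeping.
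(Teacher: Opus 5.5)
Your proof is correct and takes essentially the same route as the paper. The paper expands $A=\sum E_{ij}\otimes E_{kl}\otimes A_{ijkl}$ and checks the identities term by term, using $\tr(E_{ij})=\tr(E_{ji})$ for the $A^T$ version, which is exactly your reduction to simple tensors by linearity; you additionally spell out the $\tr_3$ identity and the factorwise transpose $(X\otimes Y\otimes Z)^T=X^T\otimes Y^T\otimes Z^T$, which the paper leaves implicit.
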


\begin{proof}
 We write $A$ in the form
 \begin{equation*}
  A = \sum_{i,j=1}^m\sum_{k,l=1}^n E_{ij}\otimes E_{kl}\otimes A_{ijkl},
 \end{equation*}
 so that
 \begin{align*}
  \tr_{12}(A)^T
   &= \left(\tr_{12}\left(\sum_{i,j=1}^m\sum_{k,l=1}^n E_{ij}\otimes E_{kl}\otimes A_{ijkl}\right)\right)^T \\
   &= \left(\sum_{i,j=1}^m\sum_{k,l=1}^n \tr(E_{ij})\tr(E_{kl})A_{ijkl}\right)^T \\
   &= \sum_{i,j=1}^m\sum_{k,l=1}^n \tr(E_{ij})\tr(E_{kl})A_{ijkl}^T \\
   &= \tr_{12}\left(\sum_{i,j=1}^m\sum_{k,l=1}^n E_{ij}\otimes E_{kl}\otimes A_{ijkl}^T\right)
    = \tr_{12}(T_3(A)).
 \end{align*}
 The proof that $\tr_3(A)^T = \tr_3(T_{12}(A))$ follows similarly. The equality $\tr_{12}(T_3(A)) = \tr_{12}(A^T)$
 can be seen by noting that $\tr(E_{ij})=\tr(E_{ji})$ in the proof above.
\end{proof}

\begin{lemma}
 \label{lem:parttrans2}%
 Let $A\in\mathbb{F}_m\otimes\mathbb{F}_n\otimes\mathbb{F}_m$. Then
 \begin{equation*}
  \tr_{12}(T_{12}(A)) = \tr_{12}(A),\qquad \tr_3(T_3(A)) = \tr_3(A).
 \end{equation*}
\end{lemma}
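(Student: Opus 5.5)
The plan is to argue exactly as in the proof of Lemma \ref{lem:parttrans1}, using linearity and a basis expansion. Both maps $\tr_{12}\circ T_{12}$ and $\tr_{12}$ are linear, and likewise $\tr_3\circ T_3$ and $\tr_3$. It therefore suffices to check each identity on the spanning set of elementary tensors $E_{ij}\otimes E_{kl}\otimes E_{rs}$, or more generally on $A_1\otimes A_2\otimes A_3$ with $A_1,A_3\in\mathbb{F}_m$ and $A_2\in\mathbb{F}_n$. Concretely, I will write
\begin{equation*}
 A = \sum_{i,j=1}^m\sum_{k,l=1}^n E_{ij}\otimes E_{kl}\otimes A_{ijkl},
\end{equation*}
with $A_{ijkl}\in\mathbb{F}_m$.

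For the first identity, apply $T_{12}$ termwise to obtain $\sum E_{ji}\otimes E_{lk}\otimes A_{ijkl}$. Then
\begin{equation*}
 \tr_{12}(T_{12}(A))=\sum \tr(E_{ji})\tr(E_{lk})A_{ijkl}.
\end{equation*}
Since $\tr(E_{ji})=\delta_{ij}=\tr(E_{ij})$, and similarly for $E_{lk}$, this equals $\sum\tr(E_{ij})\tr(E_{kl})A_{ijkl}=\tr_{12}(A)$. The underlying fact is simply $\tr(X^T)=\tr(X)$ applied in the first two tensor factors.

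For the second identity, expand instead as $A=\sum_{r,s=1}^m A_{rs}\otimes E_{rs}$ with $A_{rs}\in\mathbb{F}_m\otimes\mathbb{F}_n$. Then $T_3(A)=\sum A_{rs}\otimes E_{sr}$, and
\begin{equation*}
 \tr_3(T_3(A))=\sum\tr(E_{sr})A_{rs}=\sum\delta_{rs}A_{rs}=\tr_3(A).
\end{equation*}

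There is no real obstacle here. The only care needed is to state that $T_{12}$ and $T_3$ are well defined by linear extension, which the paper's definition provides, and to remember that the transpose acts factorwise, so only the invariance of the scalar trace under transposition is used.
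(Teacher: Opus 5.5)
Your proof is correct and is essentially the paper's own argument. The paper simply says the proof is that of $\tr(X)=\tr(X^T)$, \emph{mutatis mutandis}, and your basis expansion (mirroring Lemma \ref{lem:parttrans1}), using $\tr(E_{ji})=\tr(E_{ij})$ in the traced factors, spells out exactly that factorwise invariance.
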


\begin{proof}
 The proof is the proof that $\tr(A)=\tr(A^T)$, \textit{mutatis mutandis}.
\end{proof}

\begin{lemma}
 \label{lem:parttrans3}%
 Let $A\in\mathbb{F}_m\otimes\mathbb{F}_n\otimes\mathbb{F}_m$ and $B\in\mathbb{F}_m\otimes\mathbb{F}_n$. Then
 \begin{equation*}
  T_3(A(B\otimes I_m)) = T_3(A)(B\otimes I_m).
 \end{equation*}
\end{lemma}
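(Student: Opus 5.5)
By linearity of both sides in $A$ and in $B$ (transposition, matrix multiplication and $T_3$ are all linear), it suffices to verify the identity on elementary tensors. We take $A=A_1\otimes A_2\otimes A_3$ with $A_1,A_3\in\mathbb{F}_m$ and $A_2\in\mathbb{F}_n$, and $B=B_1\otimes B_2$ with $B_1\in\mathbb{F}_m$ and $B_2\in\mathbb{F}_n$. Since each side is bilinear in the pair $(A,B)$, and these elementary tensors span $\mathbb{F}_m\otimes\mathbb{F}_n\otimes\mathbb{F}_m$ and $\mathbb{F}_m\otimes\mathbb{F}_n$ respectively, agreement on them gives the general statement.

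For elementary tensors, the mixed-product rule for Kronecker products gives
\begin{equation*}
 A(B\otimes I_m) = (A_1B_1)\otimes(A_2B_2)\otimes(A_3I_m) = (A_1B_1)\otimes(A_2B_2)\otimes A_3 .
\end{equation*}
Applying the definition of $T_3$ yields
\begin{equation*}
 T_3(A(B\otimes I_m)) = (A_1B_1)\otimes(A_2B_2)\otimes A_3^T .
\end{equation*}
On the other side, $T_3(A)=A_1\otimes A_2\otimes A_3^T$, so again by the mixed-product rule
\begin{equation*}
 T_3(A)(B\otimes I_m) = (A_1B_1)\otimes(A_2B_2)\otimes(A_3^T I_m) = (A_1B_1)\otimes(A_2B_2)\otimes A_3^T .
\end{equation*}
The two expressions coincide, and linear extension completes the argument.

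There is no real obstacle here. The only point worth noting is that the third tensor factor is multiplied by the identity on the right, so transposing it commutes with the multiplication; had $B\otimes I_m$ been replaced by a general element acting nontrivially on the third factor, the identity would fail. One could equally phrase the argument via Remark \ref{rem:blockispartial}, writing $T_3=\text{Id}_m\otimes\text{Id}_n\otimes T$ and observing that right multiplication by $B\otimes I_m$ acts only on the first two factors.
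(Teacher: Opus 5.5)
Your proof is correct and is essentially the paper's argument. The paper likewise reduces by linearity and uses the mixed-product rule, but it expands only $A=\sum E_{ij}\otimes E_{kl}\otimes A_{ijkl}$, keeps $B$ whole via $((E_{ij}\otimes E_{kl})\otimes A_{ijkl})(B\otimes I_m)=((E_{ij}\otimes E_{kl})B)\otimes A_{ijkl}$, and then transposes the third factor; your additional decomposition of $B$ into elementary tensors is harmless but unnecessary.
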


\begin{proof}
 We write $A$ in the form
 \begin{equation*}
  A = \sum_{i,j=1}^m\sum_{k,l=1}^n E_{ij}\otimes E_{kl}\otimes A_{ijkl},
 \end{equation*}
 so that
 \begin{align*}
  T_3(A(B\otimes I_m))
   &= T_3\left( \sum_{i,j=1}^m\sum_{k,l=1}^n ((E_{ij}\otimes E_{kl})B)\otimes A_{ijkl} \right) \\
   &= \sum_{i,j=1}^m\sum_{k,l=1}^n ((E_{ij}\otimes E_{kl})B)\otimes A_{ijkl}^T \\
   &= \left( \sum_{i,j=1}^m\sum_{k,l=1}^n (E_{ij}\otimes E_{kl})\otimes A_{ijkl}^T \right)(B\otimes I_m)
    = T_3(A)(B\otimes I_m). \qedhere
 \end{align*}
\end{proof}

\begin{lemma}
 \label{lem:parttrequal}%
 Let $A,B\in\mathbb{F}_m\otimes\mathbb{F}_n\otimes\mathbb{F}_m$. Then
 \begin{equation*}
  A=B \quad\iff\quad \tr_{12}(A(X\otimes I_m)) = \tr_{12}(B(X\otimes I_m))
                                   \text{~~for all $X\in\mathbb{F}_m\otimes\mathbb{F}_n$.}
 \end{equation*}
\end{lemma}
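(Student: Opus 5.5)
The forward implication is trivial: if $A=B$ then $A(X\otimes I_m)=B(X\otimes I_m)$ for every $X$, so the partial traces agree. For the converse we set $C:=A-B$. Since $\tr_{12}$ and $Y\mapsto Y(X\otimes I_m)$ are linear, the hypothesis becomes $\tr_{12}(C(X\otimes I_m))=0$ for all $X\in\mathbb{F}_m\otimes\mathbb{F}_n$, and we must show that this forces $C=0$.

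First expand $C$ in the basis adapted to the last tensor factor,
\begin{equation*}
 C=\sum_{u,v=1}^m C_{uv}\otimes E_{uv},\qquad C_{uv}\in\mathbb{F}_m\otimes\mathbb{F}_n=\mathbb{F}_{mn}.
\end{equation*}
Then $C(X\otimes I_m)=\sum_{u,v}(C_{uv}X)\otimes E_{uv}$. Writing $\tr_{12}=\tr\otimes\tr\otimes\mathrm{Id}_m$ as in Remark \ref{rem:blockispartial}, and using that $\tr\otimes\tr$ on $\mathbb{F}_m\otimes\mathbb{F}_n$ equals the ordinary trace on $\mathbb{F}_{mn}$, we obtain
\begin{equation*}
 \tr_{12}(C(X\otimes I_m))=\sum_{u,v=1}^m \tr(C_{uv}X)\,E_{uv}.
\end{equation*}
Because the $E_{uv}$ are linearly independent, the hypothesis gives $\tr(C_{uv}X)=0$ for every $u,v$ and every $X\in\mathbb{F}_{mn}$.

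The remaining step is the nondegeneracy of the trace form on $\mathbb{F}_{mn}$. Take $X=E_{ji}\in\mathbb{F}_{mn}$. Then $\tr(C_{uv}E_{ji})=(C_{uv})_{ij}=0$ for all $i,j$, so every $C_{uv}=0$, hence $C=0$ and $A=B$. One could instead route this through the orthogonality structure of Lemma \ref{lem:orthocmp}, but the direct coordinate argument is shorter.

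The only point needing care is the identification of $\tr\otimes\tr$ on $\mathbb{F}_m\otimes\mathbb{F}_n$ with the trace on $\mathbb{F}_{mn}$. This follows from $\tr(P\otimes Q)=\tr(P)\tr(Q)$ together with linearity, so I do not expect a real obstacle in this proof.
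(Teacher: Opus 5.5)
Your proof is correct, and it reaches the result by a different bookkeeping than the paper. The paper expands along the traced factor: it views $(\mathbb{F}_m\otimes\mathbb{F}_n)\otimes\mathbb{F}_m$ as a module with the orthogonal basis $\{E_{ij}\otimes I_m : i,j\in\{1,\ldots,mn\}\}$ and invokes Lemma \ref{lem:orthocmp}. Writing $A=\sum_{i,j=1}^{mn}E_{ij}\otimes A_{ij}$ with $A_{ij}\in\mathbb{F}_m$, one has $\tr_{12}(A(E_{ji}\otimes I_m))=A_{ij}$, so the test matrices $X=E_{ji}$ recover all $\mathbb{F}_m$-valued coefficients, and linearity in $X$ finishes. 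You instead expand along the kept factor with $\mathbb{F}_{mn}$-valued coefficients $C_{uv}$, and reduce to nondegeneracy of the scalar trace pairing on $\mathbb{F}_{mn}$. Both arguments read off the same scalar coordinates, just grouped differently: the $(u,v)$ entry of the paper's coefficient $A_{ij}$ is your $(C_{uv})_{ij}$. The paper's route places the lemma inside its ortho-module framework of Appendix A. However, Lemma \ref{lem:orthocmp} as stated keeps the first tensor factor and traces the second via $\Ptr$, so the paper actually needs a mirrored version, which it only alludes to in Remark \ref{rem:orthomod}. Your argument is self-contained: it uses only the mixed-product rule, $\tr(P\otimes Q)=\tr(P)\tr(Q)$, and $\tr(C_{uv}E_{ji})=(C_{uv})_{ij}$, and so avoids that mismatch.
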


\begin{proof}
 Consider $(\mathbb{F}_m\otimes\mathbb{F}_n)\otimes\mathbb{F}_m$ as a left $\mathbb{F}_m$-module as in Remark \ref{rem:bimodule}.
 Then by Lemma \ref{lem:orthocmp}, $A=B$ if and only if $\tr_{12}(A(E_{ij}\otimes I_m)^T)=\tr_{12}(B(E_{ij}\otimes I_m)^T)$
 for all $i,j\in\{1,\ldots,mn\}$.
 ($\implies$) follows by linearity of $\tr_{12}$ and the fact that the matrices $E_{ij}$ form a basis for
 $\mathbb{F}_m\otimes\mathbb{F}_n$. ($\Longleftarrow$) is immediate since each $E_{ij}\in \mathbb{F}_m\otimes\mathbb{F}_n$.
\end{proof}

\begin{lemma}
 \label{lem:trzidz}%
 Let $A\in\mathbb{F}_m\otimes\mathbb{F}_n\otimes\mathbb{F}_p$ and $B\in\mathbb{F}_n\otimes\mathbb{F}_p$.
 If $\tr_{1}(A) = 0$, then $\tr_1(A(I_m\otimes B)) = 0$.
\end{lemma}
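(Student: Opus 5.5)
The plan is to reduce to elementary tensors by linearity and then compute directly. Both $\tr_1$ and the map $A\mapsto A(I_m\otimes B)$ are linear in $A$ and in $B$. So it suffices to verify the identity in a form that uses the hypothesis $\tr_1(A)=0$ globally, rather than term by term, since an individual term of $A$ need not have zero trace in the first factor.

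Write $A$ in the block form
\begin{equation*}
 A=\sum_{i,j=1}^m E_{ij}\otimes A_{ij},\qquad A_{ij}\in\mathbb{F}_n\otimes\mathbb{F}_p .
\end{equation*}
Then the hypothesis reads
\begin{equation*}
 \tr_1(A)=\sum_{i=1}^m A_{ii}=0 .
\end{equation*}
Next compute the product, using the mixed product property of the Kronecker product:
\begin{equation*}
 A(I_m\otimes B)=\sum_{i,j=1}^m (E_{ij}I_m)\otimes (A_{ij}B)=\sum_{i,j=1}^m E_{ij}\otimes (A_{ij}B).
\end{equation*}
Applying $\tr_1$, which acts by $\tr_1(E_{ij}\otimes Y)=\tr(E_{ij})Y=\delta_{ij}Y$, gives
\begin{equation*}
 \tr_1(A(I_m\otimes B))=\sum_{i=1}^m A_{ii}B=\Bigl(\sum_{i=1}^m A_{ii}\Bigr)B=\tr_1(A)\,B=0 .
\end{equation*}
Here $\tr_1$ on $\mathbb{F}_m\otimes\mathbb{F}_n\otimes\mathbb{F}_p$ is understood as $\tr\otimes\mathrm{Id}_n\otimes\mathrm{Id}_p$, as in Remark \ref{rem:blockispartial}. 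This is the same as $\Btr$ applied to the splitting $\mathbb{F}_m\otimes(\mathbb{F}_n\otimes\mathbb{F}_p)$. The resulting identity $\tr_1(A(I_m\otimes B))=\tr_1(A)B$ is the key step, and the conclusion follows immediately from it.

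The only point requiring care is the main obstacle, and it is notational rather than mathematical. Definition \ref{def:partialtrace} defines $\tr_1$ only on $\mathbb{F}_m\otimes\mathbb{F}_n\otimes\mathbb{F}_m$, so I would first remark that the same definition extends verbatim to three arbitrary factors. I would also remark that the block decomposition with blocks $A_{ij}\in\mathbb{F}_{np}$ is exactly the Kronecker block structure from \eqref{eq:kronent}. Once these two remarks are made, the computation above is a two-line verification.
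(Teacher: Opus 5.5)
Your proof is correct and essentially matches the paper's. Both prove $\tr_1(A(I_m\otimes B))=\tr_1(A)B$ via the mixed-product property; the paper decomposes $A$ as a sum of elementary tensors $A_{1j}\otimes A_{2j}\otimes A_{3j}$, whereas you use the block form $\sum_{i,j}E_{ij}\otimes A_{ij}$, which is only a cosmetic difference.
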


\begin{proof}
 We write $A=\displaystyle\sum_{j=1}^r A_{1j}\otimes A_{2j}\otimes A_{3j}$, where $r\in\mathbb{N}$
 and $A_{1j}\in\mathbb{F}_m$, $A_{2j}\in\mathbb{F}_n$ and $A_{3j}\in\mathbb{F}_p$ for $j\in\{1,\ldots,r\}$.
 Then
 \begin{align*}
  \tr_1(A(I_m\otimes B))
   &= \tr_1\left(\sum_{j=1}^r A_{1j}\otimes((A_{2j}\otimes A_{3j})B)\right)
    = \sum_{j=1}^r \tr(A_{1j})(A_{2j}\otimes A_{3j})B \\
   &= \left(\sum_{j=1}^r \tr(A_{1j})(A_{2j}\otimes A_{3j})\right)B
    = \tr_1(A)B = 0. \qedhere
 \end{align*}
\end{proof}

\begin{lemma}
 \label{lem:blockpartial}%
 Let $A\in\mathbb{F}_m\otimes\mathbb{F}_n\otimes\mathbb{F}_p$ and $B\in\mathbb{F}_m$, $C\in\mathbb{F}_n$ and $D\in\mathbb{F}_p$.
 Then
 \begin{align*}
  \tr_{12}(A(B\otimes C\otimes D))
  &=\tr_{12}(A(B\otimes C\otimes I_p))D \\
  &=\Btr(\tr_2(A(B\otimes C\otimes I_p))D \\
  &=\Btr(\tr_1(A(B\otimes C\otimes I_p))D, \\
  \tr_{12}((B\otimes C\otimes D)A)
  &=D\tr_{12}((B\otimes C\otimes I_p)A) \\
  &=D\Btr(\tr_1((B\otimes C\otimes I_p)A) \\
  &=D\Btr(\tr_2((B\otimes C\otimes I_p)A).
 \end{align*}
\end{lemma}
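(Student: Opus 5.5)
The plan is to reduce everything to simple tensors, since both sides of each identity in Lemma \ref{lem:blockpartial} are linear in $A$. Write
\begin{equation*}
 A=\sum_{j=1}^r A_{1j}\otimes A_{2j}\otimes A_{3j},
\end{equation*}
exactly as in the proof of Lemma \ref{lem:trzidz}, with $A_{1j}\in\mathbb{F}_m$, $A_{2j}\in\mathbb{F}_n$ and $A_{3j}\in\mathbb{F}_p$. It then suffices to prove each displayed equality for a single term $A=A_1\otimes A_2\otimes A_3$. The maps $\tr_{12}$, $\tr_1$, $\tr_2$ and $\Btr$ are linear, and multiplying on the left or right by a fixed matrix is linear, so the identities extend by linearity.

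For a simple tensor, the mixed product rule gives
\begin{equation*}
 A(B\otimes C\otimes D)=(A_1B)\otimes(A_2C)\otimes(A_3D).
\end{equation*}
Applying $\tr_{12}$ yields $\tr(A_1B)\tr(A_2C)\,A_3D$. Now take $D=I_p$ and multiply on the right by $D$: this gives $\tr(A_1B)\tr(A_2C)A_3\cdot D$, which is the same scalar times the same matrix, so the first equality holds. For the second line, $\tr_2$ of $(A_1B)\otimes(A_2C)\otimes A_3$ equals $\tr(A_2C)\,(A_1B)\otimes A_3$. Then $\Btr$ takes the trace of the first factor, giving $\tr(A_1B)\tr(A_2C)A_3$. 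The third line is handled the same way with $\tr_1$: first $\tr(A_1B)(A_2C)\otimes A_3$, then $\Btr$ gives the same scalar times $A_3$. This is just the identity $\Btr\circ\tr_1=\tr_{12}=\Btr\circ\tr_2$ from Remark \ref{rem:blockispartial}. In each case, multiplying on the right by $D$ completes the chain.

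The second group of identities, with $(B\otimes C\otimes D)A$, is symmetric. The product is $(BA_1)\otimes(CA_2)\otimes(DA_3)$, and its $\tr_{12}$ is $\tr(BA_1)\tr(CA_2)\,DA_3$. Pulling $D$ out on the left then gives the stated forms by the same three computations.

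The only real point needing care is the typing of $\Btr$. When applied to $\tr_2(\cdot)\in\mathbb{F}_m\otimes\mathbb{F}_p$ it traces the $\mathbb{F}_m$ factor, and when applied to $\tr_1(\cdot)\in\mathbb{F}_n\otimes\mathbb{F}_p$ it traces the $\mathbb{F}_n$ factor. Otherwise the argument is routine bookkeeping of the mixed-product property, and no genuine obstacle is expected.
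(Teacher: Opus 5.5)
Your proposal is correct and follows essentially the same route as the paper: expand $A$ as a sum of simple tensors, apply the mixed-product rule to get $\sum_j \tr(A_{1j}B)\tr(A_{2j}C)A_{3j}D$, and factor out $D$. The only difference is that the paper cites $\Btr\circ\tr_1=\tr_{12}=\Btr\circ\tr_2$ from Remark~\ref{rem:blockispartial}, whereas you also verify it directly on simple tensors.
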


\begin{proof}
 We will prove the first statement, the proof of the second is almost identical.
 Write $A=\displaystyle\sum_{j=1}^r A_{1j}\otimes A_{2j}\otimes A_{3j}$, where $r\in\mathbb{N}$
 and $A_{1j}\in\mathbb{F}_m$, $A_{2j}\in\mathbb{F}_n$ and $A_{3j}\in\mathbb{F}_p$ for $j\in\{1,\ldots,r\}$.
 Then
 \begin{align*}
  \tr_{12}(A(B\otimes C\otimes D))
   &= \tr_{12}\left(\sum_{j=1}^r (A_{1j}B)\otimes (A_{2j}C)\otimes (A_{3j}D)\right) \\
   &= \sum_{j=1}^r \tr(A_{1j}B)\tr(A_{2j}C) A_{3j}D \\
   &= \left(\sum_{j=1}^r \tr(A_{1j}B)\tr(A_{2j}C)A_{3j}\right)D \\
   &= \tr_{12}(A(B\otimes C\otimes I_p))D.
 \end{align*}
 The fact that $\Btr\circ\tr_2=\Btr\circ\tr_1=\tr_{12}$ follows from Remark \ref{rem:blockispartial}.
\end{proof}

\begin{lemma}
 Let $A\in\mathbb{F}_m\otimes\mathbb{F}_n\otimes\mathbb{F}_p$. Then $\tr(\tr_{12}(A)) = \tr(A)$.
\end{lemma}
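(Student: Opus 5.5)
The plan is to reduce to elementary tensors and use linearity, in the same style as the proofs of Lemma \ref{lem:trzidz} and Lemma \ref{lem:blockpartial}. Both sides, $A\mapsto \tr(\tr_{12}(A))$ and $A\mapsto\tr(A)$, are linear maps $\mathbb{F}_m\otimes\mathbb{F}_n\otimes\mathbb{F}_p\to\mathbb{F}$: $\tr_{12}$ is linear by Definition \ref{def:partialtrace} (extended to the three-factor setting with last factor of order $p$, as in Lemma \ref{lem:blockpartial}), and the ordinary trace is linear. So it suffices to check the identity on a spanning set.

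Concretely, I will write $A=\sum_{j=1}^r A_{1j}\otimes A_{2j}\otimes A_{3j}$ with $A_{1j}\in\mathbb{F}_m$, $A_{2j}\in\mathbb{F}_n$, $A_{3j}\in\mathbb{F}_p$; for instance, take the basis $E_{ij}\otimes E_{kl}\otimes E_{uv}$. For a single elementary tensor, the definition gives
\begin{equation*}
 \tr(\tr_{12}(A_1\otimes A_2\otimes A_3))=\tr(\tr(A_1)\tr(A_2)A_3)=\tr(A_1)\tr(A_2)\tr(A_3).
\end{equation*}
On the other side, the multiplicativity of the trace over Kronecker products, $\tr(X\otimes Y)=\tr(X)\tr(Y)$, applied twice gives $\tr(A_1\otimes A_2\otimes A_3)=\tr(A_1)\tr(A_2)\tr(A_3)$. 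The multiplicativity follows immediately from \eqref{eq:kronent}: the diagonal entries of $X\otimes Y$ are $(X)_{i,i}(Y)_{p,p}$, and summing over $i$ and $p$ gives the product of the traces.

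Summing over $j$ and using linearity then gives $\tr(\tr_{12}(A))=\tr(A)$. Equivalently, one can note that $\tr\circ\tr_{12}=\tr\otimes\tr\otimes\tr$ in the notation of Remark \ref{rem:blockispartial}, and this map coincides with $\tr$ on $\mathbb{F}_{mnp}$.

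There is no real obstacle here. The only point needing care is justifying $\tr(X\otimes Y)=\tr(X)\tr(Y)$ from the index formula \eqref{eq:kronent}, together with the associativity of $\otimes$, so that the three-fold product can be treated as two successive two-fold products.
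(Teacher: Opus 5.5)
Your proposal is correct and follows essentially the same route as the paper. The paper also writes $A=\sum_{j=1}^r A_{1j}\otimes A_{2j}\otimes A_{3j}$ and obtains $\tr(\tr_{12}(A))=\sum_{j}\tr(A_{1j})\tr(A_{2j})\tr(A_{3j})=\sum_j\tr(A_{1j}\otimes A_{2j}\otimes A_{3j})=\tr(A)$. Your explicit justification of $\tr(X\otimes Y)=\tr(X)\tr(Y)$ from \eqref{eq:kronent} fills in a step the paper uses without comment.
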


\begin{proof}
 We write $A=\displaystyle\sum_{j=1}^r A_{1j}\otimes A_{2j}\otimes A_{3j}$, where $r\in\mathbb{N}$
 and $A_{1j}\in\mathbb{F}_m$, $A_{2j}\in\mathbb{F}_n$ and $A_{3j}\in\mathbb{F}_p$ for $j\in\{1,\ldots,r\}$.
 Then
 \begin{align*}
  \tr(\tr_{12}(A))
   &= \tr\left(\sum_{j=1}^r \tr(A_{1j})\tr(A_{2j})A_{3j}\right)
    = \sum_{j=1}^r \tr(A_{1j})\tr(A_{2j})\tr(A_{3j}) \\
   &= \sum_{j=1}^r \tr(A_{1j}\otimes A_{2j}\otimes A_{3j})
    = \tr\left(\sum_{j=1}^r \tr(A_{1j}\otimes A_{2j}\otimes A_{3j}\right) \\
   &= \tr(A).\phantom{\sum_{j=1}^r} \qedhere
 \end{align*}
\end{proof}

\begin{lemma}
 \label{lem:Btrequiv}%
 Let $A,B\in\mathbb{F}_m\otimes\mathbb{F}_n$.
 If $\tr((I_m\otimes X)A)=\tr((I_m\otimes X)B)$ for all $X\in F_n$, then $\Btr(A)=\Btr(B)$.
 If $\tr((Y\otimes I_n)A)=\tr((Y\otimes I_n)B)$ for all $Y\in F_m$, then $\Ptr(A)=\Ptr(B)$.
\end{lemma}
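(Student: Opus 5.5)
The plan is to reduce both claims to the nondegeneracy of the trace pairing on $\mathbb{F}_n$ and on $\mathbb{F}_m$, applied to the matrices $\Btr(A)-\Btr(B)\in\mathbb{F}_n$ and $\Ptr(A)-\Ptr(B)\in\mathbb{F}_m$. By linearity, set $D:=A-B$. The first hypothesis becomes $\tr((I_m\otimes X)D)=0$ for all $X\in\mathbb{F}_n$, and the goal becomes $\Btr(D)=0$. Similarly for the partial trace.

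The key identity is
\begin{equation*}
 \tr((I_m\otimes X)D)=\tr(X\,\Btr(D)) \qquad\text{for all } D\in\mathbb{F}_m\otimes\mathbb{F}_n,\ X\in\mathbb{F}_n.
\end{equation*}
To verify it, write $D=\sum_{j=1}^r D_{1j}\otimes D_{2j}$ with $D_{1j}\in\mathbb{F}_m$ and $D_{2j}\in\mathbb{F}_n$, as in the proofs of Lemma \ref{lem:trzidz} and Lemma \ref{lem:blockpartial}. Then $(I_m\otimes X)D=\sum_j D_{1j}\otimes(XD_{2j})$, so its trace is $\sum_j\tr(D_{1j})\tr(XD_{2j})$. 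This equals $\tr\bigl(X\sum_j\tr(D_{1j})D_{2j}\bigr)=\tr(X\Btr(D))$ by Definition \ref{def:blocktrace}.

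In the same way, $\tr((Y\otimes I_n)D)=\tr(Y\,\Ptr(D))$ for all $Y\in\mathbb{F}_m$.

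With these identities the hypothesis reads $\tr(XM)=0$ for all $X\in\mathbb{F}_n$, where $M:=\Btr(D)$. Choosing $X=E_{ji}$ gives $\tr(E_{ji}M)=(M)_{i,j}=0$ for every $i,j$. Hence $M=0$, that is, $\Btr(A)=\Btr(B)$. The partial trace statement follows identically with $Y=E_{ji}\in\mathbb{F}_m$. This is just the nondegeneracy of the bilinear form $\langle\cdot,\cdot\rangle$ from Remark \ref{rem:bimodule}, which could be cited instead.

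There is no real obstacle here; the only care needed is the bookkeeping of which tensor factor is traced. The block trace $\Btr$ traces out the first factor $\mathbb{F}_m$, on which $I_m$ acts trivially. The partial trace $\Ptr$ traces out the second factor $\mathbb{F}_n$, on which $I_n$ acts trivially. Matching these correctly is exactly what makes the two identities above hold.
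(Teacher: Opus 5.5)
Your proof is correct and follows essentially the same route as the paper. The paper expands $A=\sum_{i,j=1}^m E_{ij}\otimes A_{ij}$ to obtain $\tr((I_m\otimes X)A)=\tr(X\Btr(A))$, then concludes from $\tr(X\Btr(A))=\tr(X\Btr(B))$ for all $X$. Your sum of simple tensors and your explicit choice $X=E_{ji}$ only spell out the final nondegeneracy step, which the paper leaves implicit.
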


\begin{proof}
 We will prove the first statement, the second follows similarly.
 Write $A=\displaystyle\sum_{i,j=1}^m E_{ij}\otimes A_{ij}$,
 where $A_{ij}\in\mathbb{F}_n$ for $i,j\in\{1,\ldots,m\}$. Similarly, write
 $B=\displaystyle\sum_{i,j=1}^m E_{ij}\otimes B_{ij}$. Now,
 \begin{align*}
  \tr((I_m\otimes X)A)
   &= \tr\left(\sum_{i,j=1}^m E_{ij}\otimes (XA_{ij})\right)
    = \sum_{i,j=1}^m \tr(E_{ij}) \tr(XA_{ij}) \\
   &= \sum_{i=1}^m \tr(XA_{ii}) = \tr\left(X\left(\sum_{i=1}^m A_{ii}\right)\right)
    = \tr(X\Btr(A)).
 \end{align*}
 Hence, for all $X\in\mathbb{F}_n$ we have $\tr(X\Btr(A))=\tr(X\Btr(B))$
 so that $\Btr(A)=\Btr(B)$.
\end{proof}

\end{document}